\newtheorem{theorem}{Theorem}[section]
\newtheorem{lemma}[theorem]{Lemma}
\newtheorem{proposition}[theorem]{Proposition}
\newtheorem{corollary}[theorem]{Corollary}
\theoremstyle{definition}
\newtheorem{definition}[theorem]{Definition}
\theoremstyle{remark}
\newtheorem{remark}[theorem]{Remark}
\numberwithin{equation}{section}
\newcommand{\norm}[1]{\lVert#1\rVert}
\newcommand{\Norm}[1]{\left\lVert#1\right\rVert}
\newcommand{\R}{\mathbb{R}}
\newcommand{\e}{\varepsilon}
\newcommand{\txt}[1]{\text{#1}}
\newcommand{\dist}[0]{\txt{dist}}
\begin{document}

\title[Transmission problem and homogenization]{Regularity of a transmission problem and periodic homogenization}


\author{Jinping Zhuge}
\address{Department of Mathematics, University of Chicago, Chicago, IL, 60637, USA.}
\curraddr{}
\email{jpzhuge@math.uchicago.edu}

\subjclass[2010]{35B27, 35B65, 35J47}


\begin{abstract}
This paper is concerned with the regularity theory of a transmission problem arising in composite materials. We give a new self-contained proof for the $C^{k,\alpha}$ estimates on both sides of the interface under the minimal assumptions on the interface and data. Moreover, we prove the uniform Lipschitz estimate across a $C^{1,\alpha}$ interface when the coefficients on both sides of the interface are periodic with independent structures and oscillating at different microscopic scales. 
\end{abstract}

\maketitle

\section{Introduction}
\subsection{Motivations}
The transmission problem in mathematical physics involves interfaces immersed in material bodies that contain two or more components (or inclusions) with distinct physical characters. Any physical processes across the interfaces could be interrupted and loss some continuity. Mathematically, the transmission problems are described by PDEs on each individual component and then the solutions are glued together through the so-called transmission conditions imposed on the interfaces. Such problems cannot be treated as the usual boundary value problems, because the solutions in touching components will interact with each other by the transmission conditions. The main issue for the transmission problem is the regularity of solutions near the interfaces, which is expected to be lower than a usual PDE without transmission, due to its intrinsic nature. We first give two examples of transmission problems that have attracted many interests in history.

\textbf{Example 1:} Traction problem in elasticity. Suppose $\Omega_{+}$ is a bounded Lipschitz domain, $\Omega_- = \R^d \setminus \overline{\Omega_{+}} $ and $S = \partial \Omega_{+}$. A typical traction problem is given by a Lam\'{e} system
\begin{equation}\label{eq.traction}
\left\{
\begin{aligned}
\mu_+ \nabla\cdot (e(u_+)) + \lambda_+ \nabla (\nabla\cdot u_+) & = 0 \qquad \txt{in } \Omega_{+},\\
\mu_- \nabla\cdot (e(u_-)) + \lambda_- \nabla (\nabla\cdot u_-) & = 0 \qquad \txt{in } \Omega_{-}, \\
u_+ - u_- = f \qquad \txt{ on } S, \\
\frac{\partial u_+}{\partial \nu_+} - \frac{\partial u_-}{\partial \nu_-} = g\qquad \txt{ on } S,
\end{aligned}
\right.
\end{equation}
where $e(u) = \nabla u + (\nabla u)^T$ and
\begin{equation*}
\frac{\partial u}{\partial \nu_\pm} = \mu_{\pm} n\cdot e(u) + \lambda_{\pm} n(\nabla\cdot u),
\end{equation*}
with $n$ being the outward normal of $\partial \Omega_{+}$. In (\ref{eq.traction}), we assume that the Lam\'{e} parameters are positive constants satisfying $(\mu_+,\lambda_+) \neq (\mu_-, \lambda_-)$. The third and last equations in (\ref{eq.traction}) are called transmission conditions which glue $u_+$ and $u_-$ together. The traction problem was first introduced by M. Picone in the classical elasticity theory \cite{P54}, with early developments in \cite{St56,C57,LRU66}, etc. Thanks to the method of layer potentials \cite{DKV88}, this problem has been widely studied since 1990s under the assumption that the interface is locally Lipschitz; see \cite{EFV92,ES93,HLM03,EM04,MMS06} and reference therein. The main result in this regime is the estimate of nontangential maximal function of $\nabla u_{\pm}$ in $L^2$ space.

\textbf{Example 2:} Discontinuous source terms. Let $\Omega$ be  bounded domain and $\Omega_{+} \subset \Omega$ and $\Omega_{-} = \Omega \setminus \overline{\Omega_{+}}$. Then the interface between $\Omega_{+}$ and $\Omega_{-}$ is given by $S = \overline{\Omega_{+}} \cap \overline{\Omega_{-}}$. Suppose $A = A_+ \mathbbm{1}_{\Omega_{+}} + A_- \mathbbm{1}_{\Omega_{-}}$, where $A_\pm$ are different constant coefficient matrices and $\mathbbm{1}_E$ denotes the characteristic function on $E$. Also suppose $h = h_+ \mathbbm{1}_{\Omega_{+}} + h_- \mathbbm{1}_{\Omega_{-}}$ and $h_\pm$ are different constant vector in $\R^d$. Hence both $A$ and $h$ are piecewise constant functions with a jump over $S$. Consider the follow elliptic equation
\begin{equation}\label{eq.Disc.Source}
\nabla\cdot (A \nabla u) = \nabla \cdot h \qquad \txt{in } \Omega.
\end{equation}
Simple 1-d examples show that $u$ could be smooth on each individual component (up to the interface), but may not be differentiable on the interface. The literature for (\ref{eq.Disc.Source}) and relevant topics is massive in the past two decades. We will only mention a few works that are closely related to the work of this paper. As far as the author knows, the Schauder theory for (\ref{eq.Disc.Source}) was first studied in \cite{LV00} by Y. Li and M. Vogelius, and then improved in \cite{LN03} by Y. Li and L. Nirenberg. 
The focus of \cite{LV00} and \cite{LN03} was to show that the $C^{1,\alpha}$ estimate in each individual component is independent of the distance between different components, when more than two components are touching. However, they required $\alpha \le \beta/(2(1+\beta))$, if the interface is $C^{1,\beta}$. This restriction was recently relaxed to $\alpha\le \beta/(1+\beta)$ by H. Dong and L. Xu \cite{DX19}. In the case with only two touching components, as in our Example 2, the sharp $C^{1,\alpha}$ estimate with $\alpha = \beta$ has been obtained by H. Dong \cite{Dong12} and J. Xiong and J. Bao \cite{XB13}.

The crucial point we would like to emphasize is that the previous two examples are closely related to each other. By the variational formula (\ref{def.var.sol}),
 (\ref{eq.Disc.Source}) could be equivalently interpreted as a special transmission problem
\begin{equation}\label{eq.TP.dis}
\left\{
\begin{aligned}
\nabla\cdot (A \nabla u_\pm) &= 0 \qquad &\text{ in }& \Omega_\pm, \\
u_+ &= u_- \qquad &\text{ on }&  S,\\
\Big( \frac{\partial u_+}{\partial \nu} \Big)_+ - \Big( \frac{\partial u_-}{\partial \nu} \Big)_- &= - n\cdot h_+ + n\cdot h_- \qquad &\text{ on }& S,
\end{aligned} 
\right.
\end{equation}
where $n$ is the normal direction on $S$ pointing into $\Omega_{-}$ and
\begin{equation*}
\Big( \frac{\partial u}{\partial \nu} \Big)_{\pm} = n\cdot A_{\pm} \nabla u_{\pm}.
\end{equation*}
On the other hand, a transmission problem can also be reduced to a regular problem with discontinuous source terms by a simple trick; see \cite{Dong20}. Consequently, the Schauder theory of the transmission problem may be deduced from the result of \cite{Dong12,XB13}. We mention that before H. Dong's paper \cite{Dong20}, a special case with Laplace operator in both $\Omega_{\pm}$ was considered in \cite{CSS20} by L. A. Caffarelli, M. Soria-Carro and P. R. Stinga.


The main purpose of this paper is two-fold: (i) give a new self-contained proof of the $C^{k,\alpha}$ estimates, independent of recent work \cite{Dong20,CSS20}, for general transmission problem; (ii) use the new idea from (i) to establish the uniform Lipschitz estimate when the coefficients are periodically oscillating at multiple scales.

\subsection{Assumptions and main results}
Let us reformulate the general transmission problem. Suppose $\Omega$ is a bounded Lipschitz domain in $\R^d$ and $\Omega_{\pm}$ be two disjoint subdomains such that $\Omega_{-} = \Omega \setminus \overline{\Omega_{+}}$. Let $S = \overline{\Omega_{+}} \cap \overline{\Omega_{-}}$ be the interface between $\Omega_{+}$ and $\Omega_{-}$. Depending on whether $S$ is connected to $\partial\Omega$, two types of domains might be of interest:

\begin{figure}[h]
	\begin{center}
		\includegraphics[scale =0.3]{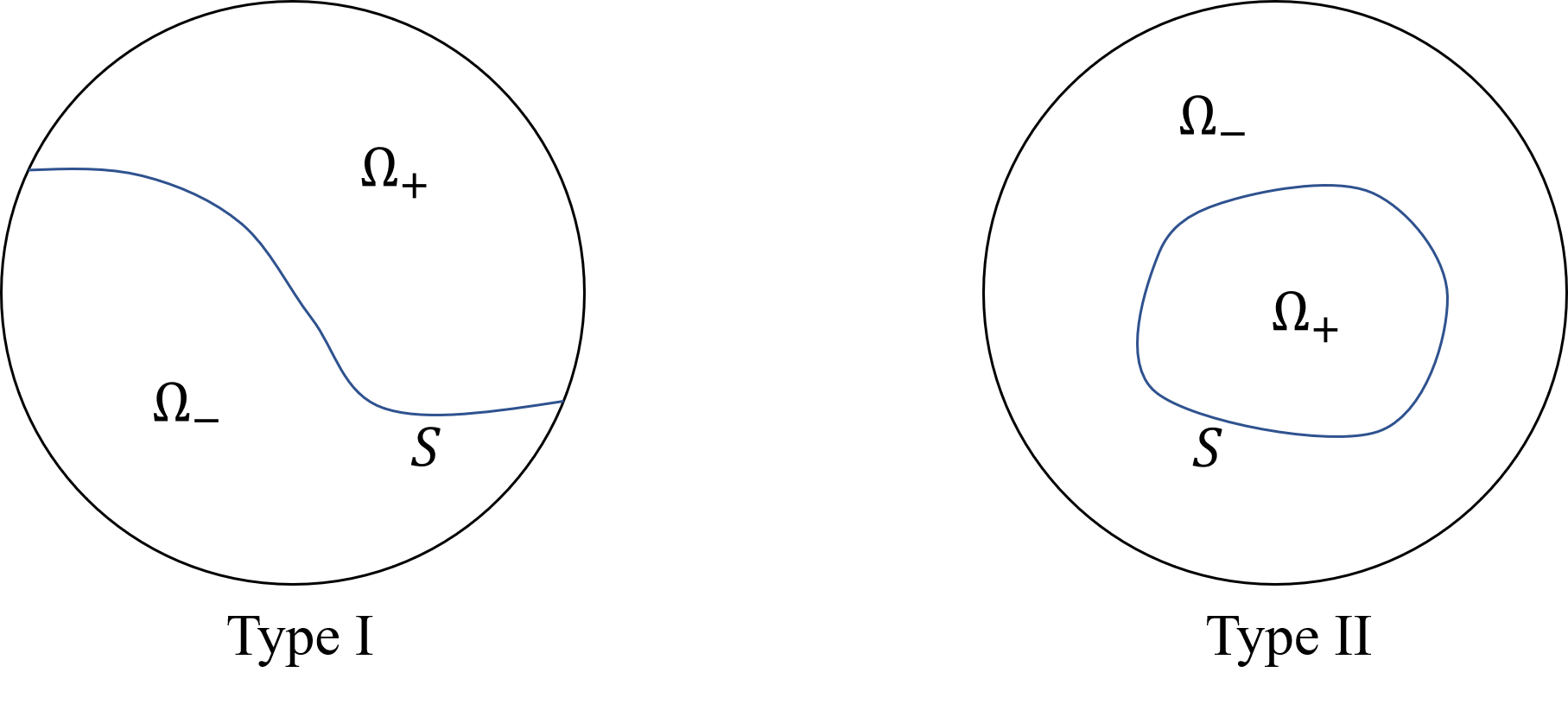}
	\end{center}
	\vspace{-1 em}
	\caption{Two types of domains with two connected components}
\end{figure}

Since we are interested in the local interior estimates (so the geometry of $S\cap \partial \Omega$ will not be considered), the focus of this paper is type I domain. Meanwhile, a type II domain could be decomposed into a union of type I subdomains around $S$. Therefore, throughout this paper, all the domains are assumed to be of type I. 

Let $d\ge 2$ be the dimension of $\Omega$. Consider an elliptic system in $\Omega$ with $m$ equations, which possess distinct behaviours in components $\Omega_{+}$ and $\Omega_{-}$. To characterize such system, we piecewise define the coefficient tensor of system by
\begin{equation}\label{def.A+-}
A(x) = \left\{ 
\begin{aligned}
A_+(x), \qquad \txt{if } x\in \overline{\Omega_+}, \\
A_-(x), \qquad \txt{if } x\in \Omega_-,\\
\end{aligned}
\right.
\end{equation}
where $A_+ = (a_{ij,+}^{\alpha\beta})$ and $A_- = (a_{ij,-}^{\alpha\beta})$, with $1\le i,j\le d, 1\le \alpha,\beta \le m$, are independent coefficient tensors satisfying the strong ellipticity condition: there exists some $\Lambda>0$ such that
\begin{equation}\label{ellipticity}
\Lambda^{-1} |\xi|^2 \le a_{ij,\pm}^{\alpha\beta}(x) \xi_i^\alpha \xi_j^\beta \le \Lambda |\xi|^2, \qquad \txt{for any } x\in \Omega_{\pm} \txt{ and } \xi \in \R^{d\times m},
\end{equation}
where the Einstein summation convention is used here and after.

Let $f$ be a function defined in $\Omega$. For $P \in S$, denote by $f_{\pm}(P)$ the limit of $f(x)$ as $x$ approaching $P$ from $\Omega_{\pm}$ (along the normal direction).
Let $n$ be the normal direction of $S$ pointing into $\Omega_-$. For $P\in S$ and $u\in H^1(\Omega;\R^m)$, define the conormal derivatives on $S$ by
\begin{equation*}
\Big( \frac{\partial u}{\partial \nu} \Big)_{\pm}(P) = n\cdot A_{\pm} (\nabla u)_{\pm}(P).
\end{equation*}

Consider the following transmission problem
\begin{equation}\label{eq.TP}
\left\{
\begin{aligned}
\nabla\cdot (A \nabla u) &= \nabla\cdot h \qquad &\text{ in }& \Omega \setminus S = \Omega_{+} \cup \Omega_{-} \\
u_+ &= u_- \qquad &\text{ on }&  S\\
\Big( \frac{\partial u}{\partial \nu} \Big)_+ - \Big( \frac{\partial u}{\partial \nu} \Big)_- &= g \qquad &\text{ on }& S,
\end{aligned} 
\right.
\end{equation}
where $h:\Omega_{\pm} \mapsto \R^{d\times m}$ is a piecewise defined function given by
\begin{equation}\label{def.h.variable}
h(x) = \left\{ 
\begin{aligned}
&h_+(x), \qquad \txt{if } x \in {\Omega_+}, \\
&h_-(x), \qquad \txt{if } x \in {\Omega_-}.\\
\end{aligned}
\right.
\end{equation}
Throughout this paper, $g$ will be called the transmission data of the solution $u$. Even though we assume the homogenous condition $u_+ = u_-$ on $S$, this is not a restriction at all as we can always add an $A_+$-harmonic function to $u$ in $\Omega_{+}$ to reduce a general non-homogenous case to (\ref{eq.TP}). The notion of the weak solution of (\ref{eq.TP}) will be given precisely in Section 2.

%

Without loss of generality, assume $B_1 = B_1(0)= \Omega$ and $0 \in S$. Let $B_t = B_t(0), B_{t,\pm} = \Omega_{\pm} \cap B_t$ and $S_t = S \cap B_t$. For any subset $E$ in $\R^d$ and $\alpha\in (0,1)$, define
\begin{equation*}
[g]_{C^\alpha(E)} = \sup_{x,y\in E} \frac{|g(x) - g(y)|}{|x-y|^\alpha} \quad \txt{ and } \quad \norm {g}_{C^\alpha(E)} = \norm {g}_{L^\infty(E)} + [g]_{C^\alpha(E)}.
\end{equation*}
Then the $C^{k,\alpha}$ norm may be defined similarly.

For convenience, we will use the following summation notation regarding the subscripts $\pm$: for any objects $a_+$ and $a_-$, define
\begin{equation*}
\sum_{\pm} a_{\pm} = a_+ + a_-.
\end{equation*}
For example, $\sum_{\pm} \norm {f_\pm}_{C^\alpha(\Omega_{\pm})}$ should be interpreted as $\norm {f_+}_{C^\alpha(\Omega_{+})} + \norm {f_-}_{C^\alpha(\Omega_{-})} $.

Now, our first result is a new proof for the Schauder estimates.
\begin{theorem}\label{thm.Cka}
	Let $k\ge 1$ and $\alpha\in (0,1)$. Let $A$ be given by (\ref{def.A+-}) and satisfy (\ref{ellipticity}). Assume $S$ is a $C^{k,\alpha}$ interface, $A_{\pm}\in C^{k-1,\alpha}(\overline{B_{1,\pm}};\R^{d^2\times m^2})$, $h_{\pm} \in C^{k-1,\alpha}(\overline{B_{1,\pm}};\R^{d\times m})$ and $g\in C^{k-1,\alpha}(S_1;\R^m)$. Let $u$ be a weak solution of (\ref{eq.TP}). Then $u\in C^{k,\alpha}(\overline{B_{1/2,\pm}};\R^m)$ and
	\begin{equation*}
	\sum_{\pm} \norm{u}_{C^{k,\alpha}(\overline{B_{1/2,\pm}})} \le C \Big( \sum_{\pm} \norm {h}_{C^{k-1,\alpha}({B_{1,\pm}})} + \norm{g}_{C^{k-1,\alpha}(S_1)} + \norm {u}_{L^2(B_1)} \Big),
	\end{equation*}
	where $C$ depends only on $\Lambda,d,m,k,\alpha$, $C^{k,\alpha}$ character of $S$ and $C^{k-1,\alpha}$ character of $A_{\pm}$.
\end{theorem}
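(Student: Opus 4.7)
My plan is to prove the theorem by a Campanato-type polynomial approximation scheme at each interface point, combined with induction on $k$. Working locally near $0 \in S$, the first step is to flatten the interface. Since $S$ is $C^{k,\alpha}$, a $C^{k,\alpha}$ diffeomorphism near $0$ maps $S$ onto the hyperplane $\{x_d = 0\}$; pulled back, the transmission problem retains its form, with transformed coefficients $\widetilde A_\pm \in C^{k-1,\alpha}$ still satisfying \eqref{ellipticity}, transformed sources $\widetilde h_\pm \in C^{k-1,\alpha}$, and transformed transmission data $\widetilde g \in C^{k-1,\alpha}$. So I may assume throughout that $B_{1,\pm} = B_1 \cap \{\pm x_d > 0\}$.

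The central building block is a regularity lemma for the model problem: if $\bar A_\pm$ are constant tensors satisfying \eqref{ellipticity} and $v$ is a weak solution of $\nabla \cdot (\bar A_\pm \nabla v_\pm) = 0$ in $B_{r,\pm}$ with $v_+ = v_-$ and $n \cdot \bar A_+ \nabla v_+ = n \cdot \bar A_- \nabla v_-$ on $S_r$, then $v \in C^\infty(\overline{B_{r/2,\pm}})$ with the expected estimates. The proof is standard once the interface is flat: any tangential difference quotient of $v$ still solves the same model problem, so $\partial_{x'}^j v$ lies in $L^2$ to all orders; the normal derivatives are then recovered from the equation on each side, and the transmission identity on $S_r$ pins down the top-order normal part via a linear system that is invertible thanks to the ellipticity of $\bar A_\pm$. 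A corollary is that if $\mathcal{P}_k$ denotes the finite-dimensional space of pairs of polynomials of degree $\le k$ which themselves solve the homogeneous model problem, then
\begin{equation*}
\inf_{P \in \mathcal{P}_k} \sum_{\pm} \norm{v - P}_{L^2(B_{\rho,\pm})} \le C (\rho/r)^{k+1} \sum_{\pm} \norm{v}_{L^2(B_{r,\pm})}, \quad 0 < \rho \le r/2.
\end{equation*}

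With this in hand, the $C^{1,\alpha}$ estimate follows from a Campanato iteration at every interface point. At dyadic scale $\rho = 2^{-j} r_0$, I would approximate $u$ in $L^2(B_\rho)$ by a solution of the model problem with frozen coefficients $\bar A_\pm = A_\pm(0)$ and no right-hand side or transmission data; the approximation error should be bounded by $\rho^{\alpha}$ times the $C^\alpha$ oscillation of $A_\pm$ and the $C^\alpha$ sizes of $h_\pm$ and $g$. Subtracting the best $\mathcal{P}_1$-approximation and rescaling $B_\rho$ to $B_1$ yields a new transmission problem of the same form with smaller scaled $L^2$ oscillation, and iterating produces geometric decay. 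By the Campanato characterization of $C^{1,\alpha}$, this gives the pointwise $C^{1,\alpha}$ expansion of $u$ on each side and hence the theorem for $k=1$. For $k \ge 2$ I induct: tangential differentiation of the equation and of the transmission conditions preserves the transmission structure (on a flat interface), so $\partial_{x'} u$ solves a transmission problem with $C^{k-2,\alpha}$ data to which the inductive hypothesis applies; the top-order normal derivatives are then recovered from the equations on each side and from the differentiated transmission identity on $S$, invoking ellipticity once more.

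The main obstacle is making the approximation step uniformly quantitative when $A$ is only $C^\alpha$. I would handle this by a contradiction/compactness argument: assuming failure of the approximation estimate yields a sequence of solutions $u^{(n)}$ with uniformly bounded $L^2$ norm, coefficients $A^{(n)} \to \bar A$ in $L^\infty$ on each side, and data $h^{(n)}, g^{(n)} \to 0$ in the relevant norms, which still stay quantitatively far from every model solution. Extracting a weak $H^1$ limit and passing to the limit in the weak formulation of Section 2 yields a weak solution of the homogeneous model problem, contradicting the assumed lower bound. The subtle point is the passage to the limit of the transmission condition in the correct trace sense on $S$, which is clean as soon as the weak formulation encodes $g$ as a distribution on $S$ stable under $H^1$-weak convergence, a point one must set up carefully in Section 2 but which is otherwise routine.
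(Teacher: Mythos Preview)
Your overall architecture---flatten the interface, prove smoothness for the constant-coefficient model problem by tangential difference quotients plus recovery of normal derivatives, run a Campanato iteration for $k=1$, and induct via tangential differentiation for $k\ge 2$---matches the paper's. The paper, however, does not use compactness for the approximation step: it constructs the comparison solution $v_t$ explicitly (with frozen coefficients and frozen transmission value $g_0=g(0)$) and obtains the perturbation bound by a direct energy estimate, testing the equation for $u-v_t$ against itself. That is simpler and fully quantitative, and it also forces an intermediate Morrey step to control $\int_{B_t}|\nabla u|^2$ before the $C^{1,\alpha}$ bootstrap.

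There is a genuine gap in your iteration. Your class $\mathcal{P}_1$ consists of piecewise linear pairs solving the \emph{homogeneous} model problem, i.e.\ with vanishing conormal jump. Subtracting $\ell\in\mathcal{P}_1$ therefore leaves the transmission data of $u-\ell$ equal to $g$, and after the $C^{1,\alpha}$ rescaling $x\mapsto \rho x$, $u\mapsto \rho^{-1-\alpha}(u-\ell)$, the rescaled datum at the origin becomes $\rho^{-\alpha}g(0)$, which blows up. Your compactness step assumes $g^{(n)}\to 0$, but that will simply be false unless $g(0)=0$; in the limit you get a model solution with nonzero constant jump, whose first-order Taylor expansion is \emph{not} in $\mathcal{P}_1$. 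The fix---and this is precisely the paper's key device---is to enlarge $\mathcal{P}_1$ to the space $\mathscr{L}$ of piecewise linear functions $M_+x\,\mathbbm{1}_{\{x_d\ge 0\}}+M_-x\,\mathbbm{1}_{\{x_d<0\}}+q$ satisfying only $M_+x=M_-x$ on $\{x_d=0\}$, and to build the excess as $\inf_{\ell\in\mathscr{L}}\{\,\cdot\,+\,|g_0-\mathscr{T}(\ell)|\}$ where $\mathscr{T}(\ell)=e_d\cdot A_+M_+^*-e_d\cdot A_-M_-^*$. The surjectivity of $\mathscr{T}:\mathscr{L}\to\mathbb{R}^m$ (an easy consequence of ellipticity: take $M_-=0$, $M_+=(e_d\cdot A_+e_d)^{-1}g_0\otimes e_d$) lets you absorb the constant part of $g$ at every scale, after which either your compactness route or the paper's energy route closes the iteration.
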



Observe that the above result does not implies the Schauder estimate for the traction problem (\ref{eq.traction}) directly as the transmission condition in (\ref{eq.TP}) is slightly different from that in (\ref{eq.traction}). However, our proof for Theorem \ref{thm.Cka} also works for (\ref{eq.traction}) with the Korn's inequality and obvious modifications.

The second result of this paper is the uniform Lipschitz estimate in periodic homogenization, which is somehow more interesting. The uniform Lipschitz estimate is one of the central problems in homogenization since it is closely related to the estimate of correctors, convergence rates, etc. Let $\Omega, \Omega_{\pm}$ and $S$ be the same as before. Let $\e = \{ \e_+,\e_-\}$ be a pair of two small parameters with $\e_{\pm} \in (0,1)$. Suppose that the coefficient tensor is given by
\begin{equation}
A^\e(x) = \left\{ 
\begin{aligned}
A_+^{\e_+}(x) &= A_+(x/\e_+), \qquad \txt{if } x \in \overline{\Omega_+}, \\
A_-^{\e_-}(x) &= A_-(x/\e_-), \qquad \txt{if } x \in \Omega_-.\\
\end{aligned}
\right.
\end{equation}
We assume that $A_{\pm}$ is a $Y_{\pm}$-periodic coefficient tensor in $\R^d$ satisfying (\ref{ellipticity}), where $Y_+$ and $Y_-$ are completely independent parallelotopes. Also, for any $P\in S$, define the conormal derivatives by
\begin{equation*}
\Big( \frac{\partial u}{\partial \nu_\e} \Big)_{\pm}(P) = n\cdot A^{\e_\pm}_{\pm}(P) (\nabla u)_{\pm}(P).
\end{equation*}
Consider the following transmission problem with oscillating coefficient tensor $A^\e$
\begin{equation}\label{eq.TP.e}
\left\{
\begin{aligned}
\nabla\cdot (A^\e \nabla u^\e) &= \nabla\cdot h \qquad &\text{ in }& \Omega \setminus S, \\
u^\e_+ &= u^\e_- \qquad &\text{ on }&  S,\\
\Big( \frac{\partial u^\e}{\partial \nu_\e} \Big)_+ - \Big( \frac{\partial u^\e}{\partial \nu_\e} \Big)_- &= g \qquad &\text{ on }& S,
\end{aligned} 
\right.
\end{equation}
where $h$ is given by (\ref{def.h.variable}).

\begin{theorem}\label{thm.Lip}
	Let $\alpha\in (0,1)$. Assume $A_{\pm}$ is $Y_{\pm}$-periodic and satisfies (\ref{ellipticity}), $S$ is a $C^{1,\alpha}$ interface, $h_{\pm} \in C^{\alpha}(\overline{B_{1,\pm}};\R^{d\times m})$ and $g\in C^{\alpha}(S_1;\R^m)$. Let $\e  = \{ \e_+,\e_-\}$ and $0<\e_+ \le \e_- < 1$. Let $u^\e$ be a weak solution of (\ref{eq.TP.e}). Then 
	
	\begin{itemize}
		\item[(i)] For every $r\in (\e_-,1)$,
		\begin{equation}\label{est.Lip}
		\bigg( \fint_{B_r} |\nabla u^\e|^2 \bigg)^{1/2} \le C\bigg[ \bigg( \fint_{B_1} |\nabla u^\e|^2 \bigg)^{1/2} + \sum_{\pm} [h]_{C^\alpha(\Omega_{\pm} \cap B_1)}  + \norm {g}_{C^\alpha(S\cap B_1)}\bigg],
		\end{equation}
		where $C$ depends only on $\Lambda,d,m,\alpha$ and the $C^{1,\alpha}$ character of $S$.
		
		\item[(ii)] If, in addition, $A_-$ is $C^\alpha$-H\"{o}lder continuous, then (\ref{est.Lip}) holds for all $r\in (\e_+,1)$ with constant $C$ depending additionally on the $C^\alpha$ character of $A_-$.
		
		\item[(iii)] If both $A_\pm$ are both $C^\alpha$-H\"{o}lder continuous, then
		\begin{equation*}
		\norm {\nabla u^\e}_{L^\infty(B_{1/2})} \le C\bigg[ \bigg( \fint_{B_1} |\nabla u^\e|^2 \bigg)^{1/2} + \sum_{\pm} [h]_{C^\alpha(\Omega_{\pm} \cap B_1)}  + \norm {g}_{C^\alpha(S\cap B_1)}\bigg],
		\end{equation*}
		where $C$ depends additionally on the $C^\alpha$ characters of $A_\pm$.
	\end{itemize}
\end{theorem}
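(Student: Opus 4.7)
The plan is to prove the three parts via an iterative excess-decay scheme of Avellaneda--Lin type, adapted to the two independent microscales $\e_+\le\e_-$. A different reference problem governs $u^\e$ on each of three regimes of spatial scales, and Theorem~\ref{thm.Cka} applied to the corresponding reference yields the required one-step $C^{1,\alpha}$ decay in each regime. The workhorse is a Campanato-type excess
\begin{equation*}
\Phi(r):=\inf_{\ell}\,\frac{1}{r}\Big(\fint_{B_r}|u^\e-\ell|^2\Big)^{1/2},
\end{equation*}
where $\ell$ ranges over piecewise affine functions compatible with the transmission condition of the appropriate homogenized problem. For each regime, the goal is to prove $\Phi(\theta r)\le\tfrac12\Phi(r)+C\,\txt{data}$ for some uniform $\theta\in(0,1/4)$, after which summing the geometric series yields the Lipschitz estimate.

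In the outer regime $r\in(\e_-,1)$, I would compare $u^\e$ with the solution $v$ of the fully homogenized transmission problem on $B_r$ with the same boundary data and transmission data, where the effective coefficients $\widehat A_\pm$ arise from the independent cell problems on $Y_\pm$. A side-by-side two-scale expansion, using $\chi_+(\cdot/\e_+)$ on $\Omega_+$ and $\chi_-(\cdot/\e_-)$ on $\Omega_-$ and glued across $S$ by a thin cut-off, together with the flux correctors $B_\pm$ to handle the resulting divergence, should yield
\begin{equation*}
\Big(\fint_{B_r}|u^\e-v|^2\Big)^{1/2}\le C(\e_-/r)^{\sigma}\,r\,\sum_{\pm}\norm{\nabla v}_{C^\alpha(B_{r,\pm})}+C\,\txt{data};
\end{equation*}
combined with the piecewise $C^{1,\alpha}$ bound produced by Theorem~\ref{thm.Cka} applied to $v$ (constant coefficients, $C^{1,\alpha}$ interface), this delivers the Campanato decay once $\e_-/r$ is small, with a compactness step absorbing the range $r\sim\e_-$. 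Iterating down to $r\sim\e_-$ gives (i). In the intermediate regime $r\in(\e_+,\e_-)$, the $C^\alpha$ hypothesis on $A_-$ lets me freeze $A_-^{\e_-}$ at its value at the origin with error $C(r/\e_-)^\alpha$; the reference problem is then only half-homogenized (frozen minus-side, effective plus-side $\widehat A_+$), the approximation error becomes $C[(\e_+/r)^\sigma+(r/\e_-)^\alpha]$, and Theorem~\ref{thm.Cka} applies identically since the reference is once again a constant-coefficient transmission problem. Iterating from $r\sim\e_-$ down to $r\sim\e_+$ gives (ii). In the inner regime $r<\e_+$, both $A_\pm^{\e_\pm}$ are $C^\alpha$ on $B_r$ after rescaling (with Hölder norms bounded uniformly using $\e_+\le\e_-$), and Theorem~\ref{thm.Cka} applies directly to give the pointwise gradient bound of (iii) after one final rescaling and combination with (ii).

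The main obstacle will be the approximation step in the intermediate regime. The plus-side corrector $\chi_+(\cdot/\e_+)$ is dictated by $Y_+$-periodicity and carries no information about the interface $S$, so the naive two-scale expansion violates the transmission condition; one must introduce a boundary-layer cut-off in a $\e_+$-neighborhood of $S$ and absorb the resulting divergence error using the flux corrector $B_+$, after first locally flattening the $C^{1,\alpha}$ interface. This asymmetric construction---homogenization on one side only, a frozen non-constant coefficient on the other, across a curved interface---does not reduce to the classical single-scale periodic setting, and is precisely where the self-contained ideas developed for Theorem~\ref{thm.Cka} (highlighted as the first contribution in the introduction) need to be brought to bear. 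The hypothesis $\e_+\le\e_-$ is used throughout to ensure the homogenization error in each regime is governed by the correct (larger) microscale, and to pair the $C^\alpha$ assumption of (ii) with the side whose scale needs freezing.
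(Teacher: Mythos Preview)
Your three-regime skeleton matches the paper's exactly, including the choice of reference problems (fully homogenized for $r>\e_-$, half-homogenized with frozen $A_-$-side for $\e_+<r<\e_-$, and a direct blow-up to Theorem~\ref{thm.Cka} for $r<\e_+$). The technical implementation, however, diverges from the paper in two places, and one of them is a genuine gap as written.

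The gap is your approximation step. Your proposed bound
\[
\Big(\fint_{B_r}|u^\e-v|^2\Big)^{1/2}\le C(\e_-/r)^{\sigma}\,r\,\sum_{\pm}\norm{\nabla v}_{C^\alpha(B_{r,\pm})}+C\,\txt{data}
\]
requires $\nabla v\in C^\alpha$ up to $\partial B_r$, but $v$ inherits only $H^1$ boundary data $u^\e|_{\partial B_r}$, so no such norm is available; shrinking to $B_{r/2}$ and invoking Theorem~\ref{thm.Cka} for $v$ would make this quantity finite but comparable to $\Phi(u^\e;r)$ itself, and the resulting inequality no longer gives a decay. The paper sidesteps this entirely by proving the rate via a \emph{duality argument} (Theorems~\ref{thm.TP.rate}, \ref{thm.TP.rate0}, \ref{thm.mainrate}): the first-order error is paired against the solution of the adjoint Dirichlet problem, and only Meyers' estimate for the latter is used. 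This yields the rate directly in terms of $\Phi(u^\e;2r)$, with no a priori regularity of $v$ needed beyond the energy estimate.

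Two further differences are worth noting. The paper's excess
\[
H(u^\e;t)=\inf_{\ell\in\mathscr{L}}\Big\{\tfrac{1}{t}\Big(\fint_{Q_t}|u^\e-\ell|^2\Big)^{1/2}+\norm{g-\mathscr{T}(\ell)}_{L^\infty(S_t)}\Big\}
\]
is taken over \emph{all} of $\mathscr{L}$ and carries the transmission defect explicitly, rather than restricting $\ell$ to match the homogenized transmission condition; since the relevant reference coefficients (hence the ``correct'' $\mathscr{T}$) change between regimes, this keeps the admissible class fixed and lets a single iteration lemma (Lemma~\ref{lem.Hiteration}) close quantitatively without any compactness step. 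Second, the curved interface is handled not by flattening inside the corrector construction but by a separate interface-stability estimate (Lemma~\ref{lem.interface stability}) comparing the curved- and flat-interface constant-coefficient problems. What you anticipated as the main obstacle---the asymmetric expansion in the intermediate regime---is in fact handled by the same duality machinery with minimal change (Lemma~\ref{lem.excess decay2}).
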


Theorem \ref{thm.Lip} is a new result for transmission problem. Even in the simple case without transmission (i.e., $h = 0$ and $g = 0$), our result is still new and extends the recent result \cite{Jo19} by M. Josien and \cite{JR19} by M. Josien and C. Raithel to the most general setting in periodic homogenization (also see related work \cite{BLL15}). Recall that in \cite{Jo19} the large-scale Lipschitz estimate was established under very restrictive conditions that $S$ is flat and the periodic structures of $A_+$ and $A_-$ are of the same scale (i.e., $\e_+ = \e_-$) and parallel to $S$ with a common period in the parallel directions.
The structure condition on $A_\pm$ then was removed in \cite{JR19}; but the interface remains to be flat.
In our theorem, these restrictions are all removed, i.e., the structures of $A_+$ and $A_-$ are completely independent and oscillating at different microscopic scales (i.e., $\e_{+}\neq \e_-$), and $S$ is an arbitrary $C^{1,\alpha}$ interface. We should emphasize that, unlike some other multi-scale problems \cite{GS20,NSX19,NS20}, we have no restriction on the ratio $\e_+/\e_-$.

We point out that Theorem \ref{thm.Lip} is sharp in several aspects. First, the Lipschitz estimate is the best regularity one can expect on the interface for the transmission problem. It is also the optimal uniform regularity in homogenization theory if no extra correction is introduced. Second, the Lipschitz estimate in Theorem \ref{thm.Lip} is stated at three different levels in terms of the smoothness of $A_-$ and $A_+$. We point out that the different ranges of $r$ at these levels are all optimal. Finally, the regularity assumptions for the interface $S$ and the transmission data are optimal in the sense that the result may fail if $\alpha = 0$ for any of them.

\subsection{Ideas of the proof}
Our idea of proving Theorem \ref{thm.Cka} is different from \cite{CSS20} or \cite{Dong20}. As noticed in \cite[Remark 4.5]{CSS20}, their approach is based on the mean value property of harmonic functions which is unable to be extended to equations/systems with variable and discontinuous coefficients. In \cite{Dong20}, the author used a simple trick to reduce the transmission problem to a problem with discontinuous source terms and therefore the Schauder estimate follows readily from the existing result in \cite{Dong12,XB13}. In this paper, we give a new self-contained proof of Theorem \ref{thm.Cka}, which also inspires the proof of Theorem \ref{thm.Lip}. Our approach to break the barrier is the full use of the so called ``piecewise linear solutions'' given in Definition \ref{def.Lset}. It is a natural generalization of the linear solutions arising in transmission problems. Taking advantage of the ``piecewise linear solutions'', along with a modified Campanato iteration argument, we are able to give a complete and clean proof for Theorem \ref{thm.Cka}.

The proof of Theorem \ref{thm.Lip} is based on a modified Campanato iteration method developed recently in, e.g., \cite{AS16,Sh17,Sh18,AKM19} etc. This method roughly contains two parts: a quantitative convergence rate and an excess decay estimate. Since our transmission problem has two microscopic scales, $\e_+$ and $\e_-$, the convergence rate and the excess decay estimate differs according to the relationship between them. Precisely, if $0<\e_+< \e_- < 1$, $u^\e$ converges to a two-sided homogenized solution $u^0$ and for some absolute constant $\sigma>0$,
\begin{equation}\label{rate1}
\norm {u^\e - u^0} \lesssim \e_-^\sigma.
\end{equation}
In this case, the system homogenizes in both $\Omega_+$ and $\Omega_{-}$. However, if $A_-$ is $C^\alpha$-H\"{o}lder continuous and $\e_+ < 1 < \e_-$ (this will happen if a mesoscopic problem is rescaled), then $u^\e$ converges to a one-sided homogenized solution $\bar{u}$ and
\begin{equation}\label{rate2}
\norm {u^\e - \bar{u}} \lesssim \e_+^\sigma + \e_-^{-\alpha}.
\end{equation}
In this case, the system homogenizes in $\Omega_{+}$ and exhibits a blow-up behavior in $\Omega_{-}$.
The convergence rates in (\ref{rate1}) and (\ref{rate2}) go a long way in explaining why the Lipschitz estimate in Theorem \ref{thm.Lip} has three different scales. The proofs of (\ref{rate1}) and (\ref{rate2}) follow from the idea of \cite{SZ17} by a duality argument, which requires no a priori estimate of $u^0$ or $\bar{u}$ (except for the energy estimate).

The key point in the second part of the proof is to construct an appropriate quantity (excess) to measure the flatness of $u^\e$ at mesoscopic scales. In a transmission problem, the correct quantity is
inspired by the proof of Theorem \ref{thm.Cka}, which involves the ``piecewise linear solutions'', i.e.,
\begin{equation*}
\begin{aligned}
H(u;t) := \inf\limits_{\substack{\ell \in \mathscr{L}}}  \bigg\{ \frac{1}{t}   \bigg( \fint_{B_t} |u - \ell |^2 \bigg)^{1/2} + \Norm {g - \mathscr{T}(\ell) }_{L^\infty(S_t)} \bigg\},
\end{aligned}
\end{equation*}
where $\mathscr{L}$ is the set of all the ``piecewise linear solutions'' and $\mathscr{T}(\ell)$ is the transmission data of the specific $\ell \in \mathscr{L}$ (see Definition \ref{def.Lset}). Once we have the correct excess, following the elegant framework of Z. Shen in \cite{Sh17}, the desired estimate is a combination of the convergence rates, an interface stability lemma and an iteration lemma. We should point out that the large-scale Lipschitz estimate for a simpler interface problem (without transmission, i.e., $g = 0, h=0$) has been proved in \cite{JR19} in the random setting. The excess defined in \cite{JR19} involves $\nabla u$ and thus the ``generalized correctors" (of which the sublinear growth is the emphasis of the paper). However, we only uses the usual correctors for $A_{+}$ and $A_-$ in the proof of convergence rates and the ``generalized correctors'' are avoided. Notice that in our multi-scale situation with $\e_+ \neq \e_-$, one will encounter new problems in defining the  ``generalized correctors''. Of course, our proof of Theorem \ref{thm.Lip} can be naturally extended to the stationary and ergodic regime.

The organization of this paper is as follows. The definition of weak solution is given in Sectoin 2. The proof of Theorem \ref{thm.Cka} is given in Section 3. In Section 4, we prove the convergence rates in the setting of periodic homogenization. The uniform Lipschitz estimate in Theorem \ref{thm.Lip} is proved in Section 5.

{\bf Acknowledgement.} The author would like to thank Prof. Hongjie Dong for pointing out the closely related work \cite{Dong12,DX19,Dong20} and many helpful discussions, after the author finished the draft of the paper.

\section{Well-posedness}
In this section, we give a definition for the weak solution of the transmission problem and prove the well-posedness (existence and uniqueness) of the transmission problem with Dirichlet boundary condition.

Let $\Omega$ be a Lipschitz domain and $S$ a Lipschitz interface that separates $\Omega$ into two Lipschitz components $\Omega_{+}$ and $\Omega_{-}$. Let $A$ satisfy (\ref{def.A+-}) and (\ref{ellipticity}). Suppose 
\begin{equation}\label{def.F}
F(x) = \left\{ 
\begin{aligned}
&F_+(x), \qquad \txt{if } x \in {\Omega_+}, \\
&F_-(x), \qquad \txt{if } x \in {\Omega_-},\\
\end{aligned}
\right.
\qquad
h(x) = \left\{ 
\begin{aligned}
&h_+(x), \qquad \txt{if } x \in {\Omega_+}, \\
&h_-(x), \qquad \txt{if } x \in {\Omega_-},\\
\end{aligned}
\right.
\end{equation}
with $F_{\pm} \in L^2(\Omega_{\pm};\R^m)$ and $h_{\pm} \in L^2(\Omega_{\pm};\R^{d\times m})$. Suppose that $g\in H^{-1/2}(S;\R^m)$. We say $u\in H^1(\Omega;\R^m)$ is a weak solution of
\begin{equation}\label{eq.TP.F}
\left\{
\begin{aligned}
\nabla\cdot (A \nabla u) &= F + \nabla\cdot h \qquad &\text{ in }& \Omega \setminus S = \Omega_{+} \cup \Omega_{-}, \\
u_+ &= u_- \qquad &\text{ on }&  S,\\
\Big( \frac{\partial u}{\partial \nu} \Big)_+ - \Big( \frac{\partial u}{\partial \nu} \Big)_- &= g \qquad &\text{ on }& S,
\end{aligned} 
\right.
\end{equation}
if for any $\phi \in H^1_0(\Omega;\R^m)$,
\begin{equation}\label{def.var.sol}
\int_{\Omega} A\nabla u\cdot \nabla \phi = \int_S (g-n\cdot h_+ + n\cdot h_-) \cdot \phi d\sigma - \sum_{\pm} \int_{\Omega_{\pm}} F_\pm \cdot \phi + \sum_{\pm} \int_{\Omega_{\pm}} h \cdot \nabla \phi,
\end{equation}
where $d\sigma = dH^{d-1}|_S$ is the surface measure. To guarantee that the first integral on the right-hand side of (\ref{def.var.sol}) is well-defined, we require some necessary condition on $h_\pm$ so that the $n\cdot (h_+ - h_-) \in H^{-1/2}(S;\R^m)$, as the trace theorem implies $\phi|_S \in H^{1/2}(S;\R^m)$. Fortunately, this is not going to be a problem as we will always assume $h_\pm$ are continuous in this paper.


\begin{remark}\label{rmk.specialcase}
	Two particular cases are worth being pointed out:
	\begin{itemize}
		\item If $F = 0$ and $h = 0$, then in view of (\ref{def.var.sol}), we may reformulate (\ref{eq.TP.F}) as
		\begin{equation}\label{eq.gdsigma}
		-\nabla\cdot (A \nabla u)  =  g d\sigma \quad  \txt{in } \Omega.
		\end{equation}
		
		\item If $F = 0$ and $g = n\cdot h_+ - n\cdot h_-$ on $S$, then we may reformulate (\ref{eq.TP.F}) as
		\begin{equation}\label{def.eq.Dh}
		\nabla\cdot (A \nabla u)  = \nabla\cdot h \quad \txt{in } \Omega.
		\end{equation}
		Therefore, all the estimates in this paper apply to system (\ref{def.eq.Dh}).
	\end{itemize}
\end{remark}


We end up this section with the well-posedness of the Dirichlet problem which will ensure the existence and uniqueness of the transmission problem throughout this paper.
\begin{theorem}
	Let $\Omega$ and $S$ be as above. Suppose $A$ satisfies (\ref{def.A+-}) and (\ref{ellipticity}), and let $F,h$ and $g$ satisfy the above assumptions. Then the system (\ref{eq.TP.F}), subject to the boundary condition $u = f \in H^{1/2}(\partial \Omega;\R^m)$ on $\partial \Omega$, has a unique solution $u\in H^1(\Omega;\R^m)$ such that
	\begin{equation*}
	\norm {u}_{H^1(\Omega)} \le C\big( \norm {f}_{H^{1/2}(\partial\Omega)} + \norm {g-n\cdot h_+ + n\cdot h_-}_{H^{-1/2}(S)} + \norm {F}_{L^2(\Omega)} + \norm {h}_{L^2(\Omega)} \big).
	\end{equation*}
\end{theorem}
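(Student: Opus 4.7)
The plan is to reduce to homogeneous Dirichlet data and then apply the Lax--Milgram theorem to the weak formulation (\ref{def.var.sol}). First, extend $f\in H^{1/2}(\partial\Omega;\R^m)$ to some $\tilde f\in H^1(\Omega;\R^m)$ with $\|\tilde f\|_{H^1(\Omega)}\le C\|f\|_{H^{1/2}(\partial\Omega)}$, and set $v=u-\tilde f$. Finding $u$ is then equivalent to finding $v\in H^1_0(\Omega;\R^m)$ such that
\[
B(v,\phi):=\int_\Omega A\nabla v\cdot\nabla\phi \;=\; L(\phi) \qquad\text{for every }\phi\in H^1_0(\Omega;\R^m),
\]
where $L$ absorbs the data and the contribution from $\tilde f$:
\[
L(\phi):=\int_S(g-n\cdot h_++n\cdot h_-)\cdot\phi\,d\sigma - \sum_\pm\int_{\Omega_\pm}F_\pm\cdot\phi + \int_\Omega h\cdot\nabla\phi - B(\tilde f,\phi).
\]

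Next, I would verify the two hypotheses of Lax--Milgram on $H^1_0(\Omega;\R^m)$. Boundedness $|B(u,\phi)|\le\Lambda\|\nabla u\|_{L^2}\|\nabla\phi\|_{L^2}$ is immediate from the upper bound in (\ref{ellipticity}). For coercivity, note that (\ref{ellipticity}) is the full Legendre condition (not merely Legendre--Hadamard), so $B(v,v)\ge\Lambda^{-1}\|\nabla v\|_{L^2}^2$ pointwise, and the Poincar\'e inequality on $H^1_0(\Omega)$ upgrades this to $B(v,v)\ge c\|v\|_{H^1}^2$ with $c$ depending only on $\Lambda$ and $\Omega$. In particular, no integration by parts across $S$ is needed, so the jump of $A$ along the interface is irrelevant to the existence argument and enters only through the linear functional $L$.

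The remaining step is to bound $\|L\|_{(H^1_0)^*}$. The volume terms are controlled by Cauchy--Schwarz, giving $(\sum_\pm\|F_\pm\|_{L^2(\Omega_\pm)} + \|h\|_{L^2(\Omega)} + \|\tilde f\|_{H^1(\Omega)})\|\phi\|_{H^1(\Omega)}$. For the interface term, the trace theorem produces $\phi|_S\in H^{1/2}(S;\R^m)$ with $\|\phi|_S\|_{H^{1/2}(S)}\le C\|\phi\|_{H^1(\Omega)}$, and the $H^{-1/2}$--$H^{1/2}$ duality pairing on $S$ then yields a bound by $\|g-n\cdot h_++n\cdot h_-\|_{H^{-1/2}(S)}\|\phi\|_{H^1(\Omega)}$. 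Lax--Milgram produces a unique $v\in H^1_0(\Omega;\R^m)$ solving $B(v,\cdot)=L$, the energy estimate $\|v\|_{H^1}\le C\|L\|_{(H^1_0)^*}$ combines with $\|\tilde f\|_{H^1}\le C\|f\|_{H^{1/2}}$ to give the stated bound on $u=v+\tilde f$, and uniqueness is automatic since the difference of two solutions lies in $H^1_0(\Omega;\R^m)$ and is annihilated by the coercive form $B$.

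The only genuinely delicate point is making sense of the interface pairing $\int_S(g-n\cdot h_++n\cdot h_-)\cdot\phi\,d\sigma$: for arbitrary $h_\pm\in L^2(\Omega_\pm)$ the normal traces $n\cdot h_\pm$ need not individually live in $H^{-1/2}(S)$, which is precisely why the hypothesis is phrased in terms of the $H^{-1/2}(S)$ norm of the combined quantity $g-n\cdot h_++n\cdot h_-$, treated as a single datum. As the paper notes, in all subsequent applications $h_\pm$ will be continuous and this distinction is cosmetic; beyond this one caveat the argument is entirely standard, so no real obstacle is anticipated.
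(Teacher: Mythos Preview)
Your proposal is correct and follows exactly the approach the paper intends: the paper's own proof is the single sentence ``By (\ref{def.var.sol}) and the Lax-Milgram theorem, this can be proved by a standard argument,'' and you have simply spelled out that standard argument. Your observation that the strong ellipticity (\ref{ellipticity}) is the full Legendre condition, so coercivity holds pointwise without any integration by parts across $S$, is precisely the reason this is routine.
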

\begin{proof}
	By (\ref{def.var.sol}) and the Lax-Milgram theorem, this can be proved by a standard argument. 
\end{proof}

\section{Schauder theory}
In this section, we develop a new approach to establish the Schauder theory for the transmission problem.

\subsection{Piecewise linear solutions}
The main difficulty in the transmission problem is that, even if the interface is flat, the set of all linear functions does not contain all the ``linear solutions'' or ``plane-like solutions'' satisfying a basic transmission condition. If the interface $S$ is curved, it is unclear how we can construct simple functions to approximate the solution. To overcome this difficulty, we first introduce a family of ``piecewise linear solutions'' for the interface transmission problem in half spaces.

Let $e_k = (0,\cdots, 1,\cdots, 0)\in \R^d$ with $1$ in the $k$th position. For convenience, let $\{ x_d = 0\} = \{ x = (x',x_d) \in \R^d: x_d = 0 \}$.
\begin{proposition}\label{prop.jump}
	Let $M_{\pm}$ be constant matrices in $ \R^{m\times d}$. The following statements are equivalent:
	\begin{itemize}
		\item[(i)] For all $x\in \{x_d = 0\}$, $M_+ x = M_- x$;
		
		\item[(ii)] There exists some $q\in \R^m$ such that
		\begin{equation*}
		M_+ - M_- = q\otimes e_d.
		\end{equation*}
		
		\item[(iii)] It holds
		\begin{equation}\label{eq.jump3}
		(I -e_d\otimes e_d) M_+^* = (I -e_d\otimes e_d) M_-^*.
		\end{equation} 
	\end{itemize}
\end{proposition}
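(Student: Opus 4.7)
The plan is to reduce all three conditions to a single statement about the difference matrix $N := M_+ - M_- \in \R^{m\times d}$, namely that its first $d-1$ columns vanish. Concretely, I would rephrase them as follows:
\begin{itemize}
\item[(i$'$)] $N x = 0$ for every $x \in \R^d$ with $x_d = 0$;
\item[(ii$'$)] $N = q \otimes e_d$ for some $q \in \R^m$;
\item[(iii$'$)] $(I - e_d \otimes e_d) N^* = 0$,
\end{itemize}
and prove the equivalences using only elementary linear algebra.

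For (i$'$) $\Rightarrow$ (ii$'$), I would simply plug in $x = e_k$ for $k = 1, \dots, d-1$ to conclude that the $k$-th column $N e_k$ of $N$ vanishes, so only the $d$-th column $q := N e_d$ can be nonzero, giving $N = q \otimes e_d$. Conversely, $(q \otimes e_d) x = q (e_d \cdot x) = q x_d$ vanishes whenever $x_d = 0$, so (ii$'$) $\Rightarrow$ (i$'$) is immediate.

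For (ii$'$) $\Leftrightarrow$ (iii$'$), I would unwind the left-hand side of (iii$'$). Since $I - e_d \otimes e_d$ is the orthogonal projection in $\R^d$ onto the hyperplane $\{x_d = 0\}$, the product $(I - e_d \otimes e_d) N^*$ is the $d \times m$ matrix obtained from $N^*$ by zeroing out its $d$-th row, equivalently the transpose of the matrix obtained from $N$ by zeroing out its $d$-th column. Thus (iii$'$) says precisely that the first $d-1$ columns of $N$ are zero, which is (ii$'$) with $q = N e_d$.

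I do not anticipate any substantive obstacle; the whole argument is a bookkeeping exercise once the three conditions are rewritten in terms of $N$. The only care needed is with the matrix-shape conventions (so that $q \otimes e_d \in \R^{m \times d}$ and $M^* \in \R^{d \times m}$ make the products in (iii) type-compatible) and with the identification of $I - e_d \otimes e_d$ as the projection off the $d$-th axis.
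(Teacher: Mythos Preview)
Your proposal is correct and is essentially the same argument as the paper's. The paper proves the cycle (ii)$\Rightarrow$(i), (iii)$\Rightarrow$(ii), (i)$\Rightarrow$(iii) by the same elementary observations about $N=M_+-M_-$ (writing $N=\sum_k q_k\otimes e_k$ and using that $I-e_d\otimes e_d$ projects onto $\{x_d=0\}$), whereas you reduce all three to the single assertion that the first $d-1$ columns of $N$ vanish; the content is identical.
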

\begin{proof}
	(ii) $\Rightarrow$ (i): For any $x \in \{ x_d = 0 \}$, one may write $x = \sum_{k = 1}^{d-1} x_k e_k$. Then,
	\begin{equation*}
	(M_+ - M_-)x = q\otimes e_d \Big(\sum_{k = 1}^{d-1} x_k e_k \Big) = 0.
	\end{equation*}
	
	(iii) $\Rightarrow$ (ii): Suppose
	\begin{equation*}
	M_+ - M_- = \sum_{k = 1}^d q_k \otimes e_k, \qquad q_k \in \R^m.
	\end{equation*}
	If (\ref{eq.jump3}) holds, then
	\begin{equation*}
	0 = (I -e_d\otimes e_d) (M_+^* - M_-^*) = (I -e_d\otimes e_d) \Big( \sum_{k = 1}^d  e_k \otimes  q_k \Big) = \sum_{k = 1}^{d-1} e_k \otimes q_k.
	\end{equation*}
	Hence, $q_k = 0$ for $k = 1,2,\cdots,d-1$. This implies $M_+ - M_- = q_d \otimes e_d$ for some $q_d \in \R^m$.
	
	(i) $\Rightarrow$ (iii): Note that for any $x\in \R^d$, $(I- e_d \otimes e_d) x \in \{ x_d = 0\}$. Thus, (i) implies
	\begin{equation*}
	M_+ (I- e_d \otimes e_d) x = M_- (I- e_d \otimes e_d) x, \qquad \txt{for any } x\in \R^d.
	\end{equation*}
	This implies (iii).
\end{proof}


Suppose $M_{\pm}$ satisfies the condition in Proposition \ref{prop.jump}. Let $\ell(x):= M_+ x \mathbbm{1}_{\{x_d \ge 0\}} + M_- x \mathbbm{1}_{\{x_d<0\}}$. If $A_{\pm}$ are constant, it is easy to verify that
\begin{equation}\label{eq.TP.linear}
\left\{
\begin{aligned}
\nabla\cdot (A \nabla \ell) &= 0 \qquad &\text{ in }& \R^d \setminus \{x_d = 0\}, \\
\ell_+ &= \ell_- \qquad &\text{ on }&  \{x_d = 0\},\\
\bigg( \frac{\partial \ell}{\partial \nu} \bigg)_+ - \bigg( \frac{\partial \ell}{\partial \nu} \bigg)_- &= e_d\cdot A_+ M_+^* - e_d\cdot A_- M_-^* \qquad &\text{ on }& \{x_d = 0\}.
\end{aligned} 
\right.
\end{equation}
The second equation in (\ref{eq.TP.linear}) holds because of Proposition \ref{prop.jump} (i). The above fact motivates us to define the ``piecewise linear solutions''.

\begin{definition}\label{def.Lset}
	We say $\ell(x):\R^d \mapsto \R^m$ is a piecewise linear solution of the interface transmission problem (\ref{eq.TP.linear}), if $\ell(x) = M_+ x \mathbbm{1}_{\{x_d \ge 0\}} + M_- x \mathbbm{1}_{\{x_d<0\}} + q$, where $q\in \R^m$ and $M_\pm$ satisfies the condition of Proposition \ref{prop.jump}. Denote by $\mathscr{L}$ the space of all the piecewise linear solutions of (\ref{eq.TP.linear}). For any $\ell \in \mathscr{L}$, define
	\begin{equation*}
	\mathscr{T}(\ell) := \Big( \frac{\partial \ell}{\partial \nu} \Big)_+ - \Big( \frac{\partial \ell}{\partial \nu} \Big)_- = e_d\cdot A_+ M_+^* - e_d\cdot A_- M_-^*.
	\end{equation*}
\end{definition}

Note that the set $\mathscr{L}$ is independent of the tensors $A_\pm$. But the liner map $\mathscr{T}:\mathscr{L}\mapsto \R^m$ depends on $A_{\pm}$.

\subsection{A basic $C^{1,\alpha}$ estimate}
The key to the proof of Theorem \ref{thm.Cka} is a basic $C^{1,\alpha}$ estimate in the case that $A$ is piecewise constant and $S$ is flat. Our approach relies on the notion of piecewise linear solutions and a fundamental result contained in the following lemma.
\begin{lemma}\label{lem.basic}
	Let $\Omega_{\pm} = \R^d_{\pm} = \{x\in \R^d: \pm x_d > 0 \}$ and $S = \{x_d = 0\}$. Let $A$ be defined by (\ref{def.A+-}) and $A_{\pm}$ is constant. Suppose that $u$ is a weak solution of $\nabla\cdot (A\nabla u) = 0$ in $B_1 = B_1(0)$. Then for any $k\ge 1$,
	\begin{equation*}
	\sum_{\pm} \norm {u}_{H^k(B_{1/2,\pm})} \le C_k \norm {u}_{L^2(B_1)}.
	\end{equation*}
\end{lemma}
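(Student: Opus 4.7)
The plan is to reduce everything to classical interior regularity in each half-ball by exploiting two features of the setup: the translation invariance of the flat interface $S = \{x_d = 0\}$ in tangential directions, and the piecewise constancy of $A$. The argument has three ingredients, executed in order: a Caccioppoli inequality, tangential difference quotients, and algebraic recovery of normal derivatives from the equation.

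First I would establish the Caccioppoli-type inequality
\[
\int_{B_r} |\nabla u|^2 \le \frac{C}{(R-r)^2}\int_{B_R} |u|^2, \qquad 0<r<R\le 1,
\]
by plugging $\phi = u\eta^2$ (with $\eta$ a smooth cutoff) into the weak formulation (\ref{def.var.sol}) with $F=0$, $h=0$, $g=0$. The continuity $u_+=u_-$ on $S$ is what ensures $u\eta^2 \in H^1_0(B_R)$, so nothing special happens at the interface. After subtracting a suitable constant from $u$ one also gets a Poincaré-type variant that will be useful for iteration, but the displayed form is enough for this lemma.

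Next, for every tangential index $i\in\{1,\dots,d-1\}$, the tangential difference quotient $D_i^h u(x) = h^{-1}(u(x+he_i)-u(x))$ still satisfies the same homogeneous transmission problem on a slightly smaller ball, because the equation $\nabla\cdot(A\nabla u)=0$, the continuity $u_+=u_-$ on $S$, and the flux-matching $(\partial u/\partial\nu)_+ = (\partial u/\partial\nu)_-$ on $S$ are all invariant under translations parallel to $S$. Applying the Caccioppoli inequality to $D_i^h u$ and letting $h\to 0$ gives $\partial_i u\in L^2_{\text{loc}}$ with quantitative control. Iterating this argument, every pure tangential derivative $\partial_{x'}^{\alpha'} u$ of any order lies in $L^2(B_r)$ with a bound by $C_{\alpha',r}\|u\|_{L^2(B_1)}$ for all $r<1$.

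Finally, I would recover normal derivatives algebraically. Inside each open half $B_{r,\pm}$ the system $\nabla\cdot(A_\pm\nabla u)=0$ is an equation with constant coefficients, which when expanded reads
\[
a_{dd,\pm}^{\alpha\beta}\,\partial_d^2 u^\beta = -\sum_{(i,j)\neq(d,d)} a_{ij,\pm}^{\alpha\beta}\,\partial_i\partial_j u^\beta.
\]
Testing the ellipticity condition (\ref{ellipticity}) with $\xi_j^\beta=\delta_{jd}\eta^\beta$ yields $a_{dd,\pm}^{\alpha\beta}\eta^\alpha\eta^\beta \ge \Lambda^{-1}|\eta|^2$, so the $m\times m$ matrix $(a_{dd,\pm}^{\alpha\beta})$ is invertible and $\partial_d^2 u$ is expressed as a linear combination of second derivatives each of which has at least one tangential factor. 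Since those are already controlled by the previous step, $\partial_d^2 u\in L^2(B_{r,\pm})$. An induction on the number of normal derivatives closes the argument: any derivative of the form $\partial_d^{k_d}\partial_{x'}^{\alpha'} u$ with $k_d\ge 2$ can be replaced, via the displayed identity (differentiated $k_d-2$ times in $x_d$ and $\alpha'$ times tangentially), by a combination of derivatives with at most $k_d-2$ normal derivatives, strictly reducing the normal order. I do not anticipate a real obstacle here; the only point requiring care is bookkeeping the constants through the induction so that the final estimate depends only on $k$, $d$, $m$, and $\Lambda$, which is entirely routine once the three steps above are in place.
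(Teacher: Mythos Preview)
Your proposal is correct and follows essentially the same route as the paper: Caccioppoli inequality, tangential translation invariance to control all $\partial_{x'}^{\alpha'}\nabla u$, then algebraic inversion of $(a_{dd,\pm}^{\alpha\beta})$ to recover normal derivatives by induction. One cosmetic slip: after applying the displayed identity, the resulting terms carry at most $k_d-1$ (not $k_d-2$) normal derivatives, but this still strictly reduces the normal order and the induction closes as you say.
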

The above estimate should be well-known. However, since we could not find a straightforward proof, we would like to include the outline of a proof here. In fact, since $A$ remains constant in directions parallel to $S = \{ x_d = 0 \}$, one may take arbitrary derivatives to the system in $x'=(x_1,x_2,\cdots,x_{d-1})$. Then by the Caccioppli inequality and induction, we see that for any $k\ge 0$, $\norm {\nabla_{x'}^k \nabla u}_{L^2(B_1/2)} \le C\norm {u}_{L^2(B_1)}$. To recover the derivatives in $x_d$, we may rearrange the equation $\nabla\cdot (A\nabla u) = 0$ as, in either $B_{1/2,+}$ or $B_{1/2,-}$,
\begin{equation}\label{eq.Dd2u}
\frac{\partial^2 u}{\partial x_d^2} = \txt{a linear combination of } \frac{\partial^2 u}{\partial x_i \partial x_j} \txt{ or } \frac{\partial u}{\partial x_j} \txt{ with } 1\le i \le d-1 \txt{ and } 1\le j \le d.
\end{equation}
This implies $\sum_{\pm} \norm {\nabla^2 u}_{L^2(B_{1/2,\pm})} \le C\norm {u}_{L^2(B_1)}$. By taking derivatives in $x_d$ to the equation (\ref{eq.Dd2u}) and induction, we derive the estimate for arbitrarily higher-order derivatives.

Observe that Lemma \ref{lem.basic} implies that $u$ is smooth in either $\overline{ B_{1/2,+}}$ or $\overline{ B_{1/2,-}}$, namely,
\begin{equation}\label{est.basic.Cka}
\sum_{\pm} \norm {u}_{C^{k,\alpha}(\overline{ B_{1/2,\pm}})} \le C_{k,\alpha} \norm {u}_{L^2(B_1)},
\end{equation}
for any $k\ge 0, \alpha \in (0,1]$.
In particular, this implies that $u$ is Lipschitz continuous in $B_{1/2}$, which is optimal, as simple examples constructed from (\ref{eq.TP.linear}) show that $u$ may not be differentiable on the interface $\{x_d = 0\}$.

\begin{lemma}\label{lem.g0}
	Let $g_0 \in \R^m$ be constant. Let $u$ be a weak solution of
	\begin{equation}\label{eq.g0.const}
	\left\{
	\begin{aligned}
	\nabla\cdot (A \nabla u) &= 0 \qquad &\text{ in }& B_1 \setminus \{x_d = 0\}, \\
	u_+ &= u_- \qquad &\text{ on }&  B_1\cap \{x_d = 0\},\\
	\Big( \frac{\partial u}{\partial \nu} \Big)_+ - \Big( \frac{\partial u}{\partial \nu} \Big)_- &= g_0 \qquad &\text{ on }& B_1\cap \{x_d = 0\}.
	\end{aligned} 
	\right.
	\end{equation}
	Then, for any $k\ge 0$ and $\alpha \in (0,1]$,
	\begin{equation*}
	\sum_{\pm} \norm {u}_{C^{k,\alpha}(\overline{ B_{1/2,\pm}})} \le C_{k,\alpha} \big( \norm {u}_{L^2(B_1)} + |g_0| \big).
	\end{equation*}
\end{lemma}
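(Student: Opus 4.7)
The plan is to reduce the inhomogeneous transmission problem to the homogeneous one already handled by Lemma \ref{lem.basic}, by subtracting off a single piecewise linear solution that absorbs the jump $g_0$. The core observation is that $\mathscr{T}:\mathscr{L}\mapsto \R^m$ is a surjective linear map in this flat, constant-coefficient setting, so we can solve $\mathscr{T}(\ell)=g_0$ explicitly with $\norm{\ell}_{L^\infty(B_1)}\le C|g_0|$.

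First I would construct such an $\ell$ by the simplest possible ansatz: set $M_-=0$ and $M_+=q\otimes e_d$, which automatically satisfies the jump condition $M_+-M_-=q\otimes e_d$ required by Definition \ref{def.Lset}. A direct computation of the conormal jump gives
\begin{equation*}
\mathscr{T}(\ell)^\alpha = e_d\cdot A_+ M_+^* = a_{dd,+}^{\alpha\beta} q^\beta,
\end{equation*}
so I need to solve $(A_+^{dd})_{\alpha\beta}\, q^\beta = g_0^\alpha$ where $(A_+^{dd})_{\alpha\beta}=a_{dd,+}^{\alpha\beta}$. Testing the ellipticity condition (\ref{ellipticity}) against $\xi=\eta\otimes e_d$ yields $a_{dd,+}^{\alpha\beta}\eta^\alpha\eta^\beta\ge \Lambda^{-1}|\eta|^2$, so $A_+^{dd}$ is invertible and $|q|\le C|g_0|$. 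Hence $\ell\in\mathscr{L}$ with $\mathscr{T}(\ell)=g_0$ and $\norm{\ell}_{C^{k,\alpha}(\overline{B_{1,\pm}})}\le C|g_0|$ for every $k$ and $\alpha$.

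Next I would set $v:=u-\ell$. By construction $v_+=v_-$ and $(\partial v/\partial\nu)_+=(\partial v/\partial\nu)_-$ on $\{x_d=0\}\cap B_1$, while $\nabla\cdot(A\nabla v)=0$ holds pointwise in $B_1\setminus\{x_d=0\}$. Plugging into the weak formulation (\ref{def.var.sol}) with $F=0$, $h=0$, $g=0$ and integrating by parts on each side shows that the surface integral over $S$ cancels, so $v$ is a genuine weak solution of $\nabla\cdot(A\nabla v)=0$ in the full ball $B_1$. Lemma \ref{lem.basic}, in the form (\ref{est.basic.Cka}), then gives
\begin{equation*}
\sum_{\pm}\norm{v}_{C^{k,\alpha}(\overline{B_{1/2,\pm}})}\le C_{k,\alpha}\norm{v}_{L^2(B_1)}\le C_{k,\alpha}\bigl(\norm{u}_{L^2(B_1)}+|g_0|\bigr).
\end{equation*}
Adding back the bound for $\ell$ and using the triangle inequality closes the estimate.

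I do not expect a serious obstacle here: the only non-trivial piece is the solvability of $\mathscr{T}(\ell)=g_0$, which is handled by the one-line ellipticity argument above. The main conceptual point, which I would flag in the write-up, is that this lemma isolates the entire role the constant $g_0$ will play in the Campanato iteration later: it shows that the obstruction to smoothness across the flat interface lies in a finite-dimensional space, namely the range of $\mathscr{T}$ on $\mathscr{L}$, which is exactly $\R^m$.
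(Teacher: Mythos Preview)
Your proposal is correct and follows essentially the same approach as the paper: construct $\ell\in\mathscr{L}$ with $M_-=0$, $M_+=q\otimes e_d$, solve $(e_d\cdot A_+ e_d)q=g_0$ via ellipticity, then apply Lemma~\ref{lem.basic} (in the form \eqref{est.basic.Cka}) to $w=u-\ell$. The paper's proof is exactly this, down to the choice of ansatz.
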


\begin{proof}
	The key insight of proof is that we can find a piecewise linear solution so that the transmission data equals $g_0$ (thus $\mathscr{T}: \mathscr{L}\mapsto \R^m$ is surjective). In other words, we would like to find $M_{\pm}$ satisfying the condition in Proposition \ref{prop.jump} so that $\ell(x) = M_+ x \mathbbm{1}_{\{x_d \ge 0\}} + M_- x \mathbbm{1}_{\{x_d<0\}} \in \mathscr{L}$ and
	\begin{equation*}
	\mathscr{T}(\ell) = e_d\cdot A_+ M_+^* - e_d\cdot A_- M_-^* = g_0.
	\end{equation*}
	The construction of such $\ell$ is not unique. For simplicity, let $M_- = 0$. Then, from Proposition \ref{prop.jump} (ii), $M_+ = q\otimes e_d$ for some $q\in \R^m$. Therefore,
	\begin{equation*}
	\mathscr{T}(\ell) = e_d\cdot A_+ M_+^* = (e_d\cdot A_+ e_d) q = g_0.
	\end{equation*}
	Note that, due to (\ref{ellipticity}), $(e_d\cdot A_+ e_d) = (a_{dd,+}^{\alpha\beta})_{1\le \alpha,\beta\le m}$ is a constant positive definite $m\times m$ matrix. Hence, we obtain $q = (e_d\cdot A_+ e_d)^{-1} g_0$. It follows that
	\begin{equation}
	\ell(x) =  [(e_d\cdot A_+ e_d)^{-1} g_0 ] x_d
	\end{equation}
	is a weak solution of (\ref{eq.g0.const}). Clearly,
	\begin{equation}\label{est.ell.g0}
	\sum_{\pm} \norm {\ell }_{C^{k,\alpha}(\overline{ B_{1/2,\pm}})} \le C|g_0|.
	\end{equation}
	Next, consider $w = u - \ell$. Observe (from (\ref{def.eq.Dh}) with $h = 0$) that $w$ is a weak solution of $\nabla\cdot (A\nabla w) = 0$ in $B_1$. By (\ref{est.basic.Cka}), we have
	\begin{equation*}
	\sum_{\pm} \norm {w}_{C^{k,\alpha}(\overline{ B_{1/2,\pm}})} \le C_{k,\alpha} \norm {w}_{L^2(B_1)},
	\end{equation*}
	which, along with (\ref{est.ell.g0}), gives the desired estimate.
\end{proof}

\begin{lemma}\label{lem.v-lv}
	Let $v$ be a weak solution of
	\begin{equation}\label{eq.TP.g0}
	\left\{
	\begin{aligned}
	\nabla\cdot (A \nabla v) &= 0 \qquad &\text{ in }& B_t \setminus \{x_d = 0\}, \\
	v_+ &= v_- \qquad &\text{ on }&  B_t\cap \{x_d = 0\},\\
	\Big( \frac{\partial v}{\partial \nu} \Big)_+ - \Big( \frac{\partial v}{\partial \nu} \Big)_- &= g_0 \qquad &\text{ on }& B_t\cap \{x_d = 0\}.
	\end{aligned} 
	\right.
	\end{equation}
	Let $M_{\pm} = (\nabla v)^*_\pm (0)$ and $q = v(0) \in \R^m$. Then $\ell_v(x) = M_+ x \mathbbm{1}_{\{x_d \ge 0\}} + M_- x \mathbbm{1}_{\{x_d<0\}} + q \in \mathscr{L}$. Moreover,
	for any $x\in B_{t/2}$, we have
	\begin{equation}\label{est.v-lv}
	|v(x) - \ell_v(x)| \le C(|x|/t)^{1+\beta} \bigg\{ \bigg( \fint_{B_t} |v|^2 \bigg)^{1/2}  + t|g_0| \bigg\},
	\end{equation}
	where $\beta \in (0,1)$.
\end{lemma}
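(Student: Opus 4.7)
The plan is to reduce to the case $t=1$ by rescaling, invoke Lemma \ref{lem.g0} with $k=1$ and $\alpha=\beta$ to control $v$ in $C^{1,\beta}$ up to the interface on each side, and then conclude by a one-sided Taylor expansion at the origin.

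First I would verify that $\ell_v \in \mathscr{L}$, i.e.\ the compatibility condition of Proposition \ref{prop.jump}(i): $M_+ x = M_- x$ for every $x \in \{x_d = 0\}$. Since Lemma \ref{lem.g0} applied in $B_t$ guarantees that each $v_\pm$ is $C^{1,\beta}$ up to $S$ from its own side, and since $v_+ = v_-$ on $S$, differentiating this identity along the tangential directions at $0$ gives $\partial_j v_+^\alpha(0) = \partial_j v_-^\alpha(0)$ for every $\alpha$ and every $j = 1,\dots,d-1$. Hence the first $d-1$ columns of $M_+ = (\nabla v)_+^*(0)$ and $M_- = (\nabla v)_-^*(0)$ coincide, which is exactly $M_+ x = M_- x$ on $\{x_d = 0\}$. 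Thus $\ell_v \in \mathscr{L}$.

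Next I would rescale. Set $\tilde v(y) = v(ty)/t$ for $y \in B_1$. A direct computation shows $\tilde v$ solves (\ref{eq.TP.g0}) on $B_1$ with the \emph{same} constant transmission datum $g_0$, that $\tilde M_\pm = M_\pm$ and $\tilde q = q/t$, so that the associated piecewise linear solution satisfies $\ell_v(x) = t\, \tilde \ell_{\tilde v}(x/t)$, and moreover
\[
\|\tilde v\|_{L^2(B_1)} \le C t^{-1} \Bigl(\,\fint_{B_t} |v|^2\Bigr)^{1/2}.
\]
Applying Lemma \ref{lem.g0} to $\tilde v$ with $k=1$ and $\alpha = \beta$ yields
\[
\sum_{\pm} \|\tilde v\|_{C^{1,\beta}(\overline{B_{1/2,\pm}})} \le C \bigl( \|\tilde v\|_{L^2(B_1)} + |g_0| \bigr).
\]
On each side $\overline{B_{1/2,\pm}}$, a standard Taylor expansion at the boundary point $0$ then gives
\[
|\tilde v(y) - \tilde v(0) - (\nabla \tilde v)_\pm(0) \cdot y| \le C |y|^{1+\beta} \bigl( \|\tilde v\|_{L^2(B_1)} + |g_0| \bigr), \qquad y \in B_{1/2,\pm},
\]
and the left-hand side is precisely $|\tilde v(y) - \tilde \ell_{\tilde v}(y)|$ by the definition of $\tilde \ell_{\tilde v}$.

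Finally I would undo the rescaling: for $x \in B_{t/2}$ one has $v(x) - \ell_v(x) = t\bigl[\tilde v(x/t) - \tilde \ell_{\tilde v}(x/t)\bigr]$. Inserting the previous bound at $y = x/t$ and using the estimate for $\|\tilde v\|_{L^2(B_1)}$ produces exactly (\ref{est.v-lv}). I do not anticipate a genuine obstacle here: once Lemma \ref{lem.g0} is in hand, everything is a clean two-sided Taylor estimate. The only points requiring care are (a) confirming the interface compatibility so that $\ell_v \in \mathscr{L}$, and (b) tracking the scaling so that the $(\fint_{B_t}|v|^2)^{1/2}$ and $t|g_0|$ terms emerge with the correct powers of $t$.
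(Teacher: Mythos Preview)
Your proposal is correct and follows essentially the same approach as the paper: verify $\ell_v\in\mathscr{L}$ by matching tangential derivatives of $v_\pm$ at the origin (the paper phrases this via Proposition~\ref{prop.jump}(iii), you via (i), which are equivalent), reduce to $t=1$ by rescaling, apply Lemma~\ref{lem.g0} with $k=1$ to get $C^{1,\beta}$ control on each side, and conclude by a one-sided first-order Taylor expansion at $0$. The only cosmetic difference is that you carry out the rescaling explicitly while the paper simply asserts ``by rescaling, it suffices to prove the case $t=1$''.
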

\begin{proof}
	By rescaling, it suffices to prove the case $t = 1$. First of all, Lemma \ref{lem.basic} implies that $v$ is smooth up to the the interface and thus $v(0)$ and $\nabla v(0)$ are well-defined.
	Since $v_+ = v_-$ on $S_t$, the tangential derivatives of $v_+$ and $v_-$ coincides, namely, $(1-e_d\otimes e_d) (\nabla v)_+ = (1-e_d\otimes e_d) (\nabla v)_-$ on $B_t\cap \{x_d = 0 \}$. Therefore, $M_\pm$ satisfies Proposition \ref{prop.jump} (iii) and thus $\ell_v \in \mathscr{L}$, by Definition \ref{def.Lset}.
	
	Moreover, for $x\in B_{1/2}$,
	\begin{equation}\label{est.w1}
	\begin{aligned}
	|v(x) - \ell_v(x)| &= |v(x) - v(0) - x\cdot (\nabla v)_{\pm}(0)|\\
	& \le C|x|^{1+\beta} [v]_{C^{1,\beta}(B_{1/2},\pm)} \\
	& \le C|x|^{1+\beta} \big( \norm {v}_{L^2(B_1)}  + |g_0| \big),
	\end{aligned}
	\end{equation}
	where we have used Lemma \ref{lem.g0} (with $k = 1$) in the last inequality. This is exactly the desired estimate with $t = 1$.
\end{proof}

The following lemma is a version of the excess decay estimate in Schauder theory. This type of estimate is crucial in studying $C^{1,\alpha}$ estimate, as well as the Lipschitz estimate in homogenization; see Lemma \ref{lem.vt}.
\begin{lemma}\label{lem.C1a.excess}
	Let $v$ be a solution of (\ref{eq.TP.g0}) and $\beta \in (0,1)$. Then, for every $\rho \in (0,t)$,
	\begin{equation}\label{est.inf.L}
	\begin{aligned}
	& \inf_{\ell \in \mathscr{L}} \bigg\{ \bigg( \fint_{B_{\rho}} |v - \ell|^2 \bigg)^{1/2} + \rho|g_0 - \mathscr{T}(\ell)| \bigg\} \\
	& \qquad \le C(\rho/t)^{1+\beta} \inf_{\ell \in \mathscr{L}} \bigg\{ \bigg( \fint_{B_{t}} |v - \ell|^2 \bigg)^{1/2} + t|g_0 - \mathscr{T}(\ell) |\bigg\}.
	\end{aligned}
	\end{equation}
\end{lemma}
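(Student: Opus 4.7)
The plan is to leverage the vector-space structure of $\mathscr{L}$ together with the linearity of $\mathscr{T}$, and then reduce the excess decay to a direct application of Lemma \ref{lem.v-lv} on the shifted solution $v - \ell$.

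First I would fix an arbitrary $\ell \in \mathscr{L}$ and observe that $w := v - \ell$ is itself a weak solution of (\ref{eq.TP.g0}) on $B_t$, but with the constant transmission data $g_0$ replaced by $g_0 - \mathscr{T}(\ell)$. This follows because $\ell$ solves (\ref{eq.TP.linear}), so subtracting preserves the equation $\nabla\cdot(A\nabla w)=0$ and the continuity across $\{x_d=0\}$, while the conormal jump of $\ell$ contributes exactly $-\mathscr{T}(\ell)$.

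Next I would apply Lemma \ref{lem.v-lv} to $w$: there exists $\ell_w\in\mathscr{L}$, built from $w(0)$ and $(\nabla w)_\pm^*(0)$, such that for every $x\in B_{t/2}$,
\begin{equation*}
|w(x)-\ell_w(x)| \le C(|x|/t)^{1+\beta}\Big\{ \Big(\fint_{B_t}|w|^2\Big)^{1/2} + t|g_0-\mathscr{T}(\ell)|\Big\}.
\end{equation*}
The crucial bookkeeping is that by Definition \ref{def.Lset} and the transmission condition for $w$ at $0$, we have $\mathscr{T}(\ell_w) = e_d\cdot A_+(\nabla w)_+(0) - e_d\cdot A_-(\nabla w)_-(0) = g_0 - \mathscr{T}(\ell)$. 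Set $\tilde\ell := \ell+\ell_w$; since $\mathscr{L}$ is a vector space and $\mathscr{T}$ is linear on it, $\tilde\ell\in\mathscr{L}$ and $\mathscr{T}(\tilde\ell) = \mathscr{T}(\ell)+\mathscr{T}(\ell_w) = g_0$. Thus the boundary contribution $|g_0-\mathscr{T}(\tilde\ell)|$ vanishes identically.

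Squaring and averaging the Lemma \ref{lem.v-lv} bound over $B_\rho$ with $\rho\le t/2$ gives
\begin{equation*}
\Big(\fint_{B_\rho}|v-\tilde\ell|^2\Big)^{1/2} + \rho|g_0-\mathscr{T}(\tilde\ell)| \le C(\rho/t)^{1+\beta}\Big\{\Big(\fint_{B_t}|v-\ell|^2\Big)^{1/2} + t|g_0-\mathscr{T}(\ell)|\Big\}.
\end{equation*}
Since $\tilde\ell$ is an admissible competitor on the left, taking the infimum over $\ell\in\mathscr{L}$ on the right yields the claim for $\rho\le t/2$. The leftover range $\rho\in[t/2,t)$ is handled trivially by choosing the same $\ell$ on both sides and noting $(\fint_{B_\rho}|v-\ell|^2)^{1/2}\le (t/\rho)^{d/2}(\fint_{B_t}|v-\ell|^2)^{1/2}\le 2^{d/2}(\fint_{B_t}|v-\ell|^2)^{1/2}$, with the prefactor $(\rho/t)^{1+\beta}\ge 2^{-(1+\beta)}$ absorbing the loss. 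The main (minor) obstacle is the verification that $\mathscr{T}(\ell_w)$ equals exactly the transmission data of $w$ at the origin; this is ensured by the fact that Lemma \ref{lem.basic} yields enough regularity of $w$ up to the interface for $(\nabla w)_\pm(0)$ to be defined and to satisfy the pointwise transmission condition.
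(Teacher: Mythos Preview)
Your argument is correct and follows essentially the same approach as the paper: subtract an arbitrary $\ell\in\mathscr{L}$ so that $w=v-\ell$ solves the same system with transmission data $g_0-\mathscr{T}(\ell)$, apply Lemma~\ref{lem.v-lv} to $w$ (noting $\mathscr{T}(\ell_w)=g_0-\mathscr{T}(\ell)$), and then take the infimum over $\ell$ on the right. Your explicit treatment of the residual range $\rho\in[t/2,t)$ is a small extra care the paper leaves implicit.
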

\begin{proof}
	Let $\ell_v$ be given as in Lemma \ref{lem.v-lv}, then $g_0 = \mathscr{T}(\ell_v)$. Hence, (\ref{est.v-lv}) yields
	\begin{equation}\label{est.vBrho}
	\bigg( \fint_{B_{\rho}} |v - \ell_v|^2 \bigg)^{1/2} + \rho|g_0 - \mathscr{T}(\ell_v)| \le C(\rho/t)^{1+\beta} \bigg\{ \bigg( \fint_{B_{t}} |v|^2 \bigg)^{1/2} + t|g_0|\bigg\}.
	\end{equation}
	Now, since any $\ell \in \mathscr{L}$ is a weak solution of (\ref{eq.TP.linear}), $v-\ell$ is also a weak solution with the transmission data equalling $g_0 -\mathscr{T}(\ell)$. Applying the estimate (\ref{est.vBrho}) to $v - \ell$, we have
	\begin{equation}\label{est.v-l}
	\bigg( \fint_{B_{\rho}} |v - \ell_v|^2 \bigg)^{1/2} + \rho|g_0 - \mathscr{T}(\ell_v)| \le C(\rho/t)^{1+\beta} \bigg\{ \bigg( \fint_{B_{t}} |v - \ell|^2 \bigg)^{1/2} + t|g_0 - \ell|\bigg\},
	\end{equation}
	for any $\ell \in \mathscr{L}$. Finally, by taking infimum of the right-hand side of (\ref{est.v-l}) over all $\ell\in \mathscr{L}$, we obtain (\ref{est.inf.L}).
\end{proof}


\subsection{Perturbation and iteration}
In this subsection, we prove the $C^{1,\alpha}$ estimate for the general transmission problem by the method of perturbation and Campanato iteration. The following well-known lemma will be useful.
\begin{lemma}[e.g., \cite{GM12}]\label{lem.Iteraton}
	Let $\Phi: \R_+ \mapsto \R_+$ be a non-negative and non-decreasing function satisfying
	\begin{equation*}
	\Phi(\rho) \le C (\rho/r)^\beta \Phi(r) + Br^\alpha,
	\end{equation*}
	for some $\beta>\alpha>0, C>0$ and for all $0<\rho<r<R$, where $R>0$ is given. Then
	there is $C_1$ depending only on $C,\alpha$ and $\beta$ so that
	\begin{equation*}
	\Phi(\rho) \le C_1 (\rho/r)^\alpha \Phi(r) + B\rho^\alpha.
	\end{equation*}
\end{lemma}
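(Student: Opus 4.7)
The strategy is the classical Campanato dyadic iteration: freeze a small scale ratio $\tau\in(0,1)$, iterate the hypothesis along the geometric sequence $\tau^k r$, and then interpolate to an arbitrary $\rho$ by monotonicity. The first step, and essentially the only nontrivial one, is the choice of $\tau$. Since $\beta>\alpha$, the plan is to pick $\tau=\min\{1/2,(2C)^{-1/(\beta-\alpha)}\}\in(0,1)$ so that $C\tau^{\beta}\le \tfrac{1}{2}\tau^{\alpha}$. This is the only place the gap $\beta>\alpha$ is used, and it is what allows the leading geometric factor $C\tau^{\beta}$ to be absorbed strictly below the algebraic factor $\tau^{\alpha}$ produced by the conclusion.

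With such $\tau$ in hand, applying the hypothesis with $\tau^{k+1}r$ in place of $\rho$ and $\tau^k r$ in place of $r$ produces, for every $k\ge 0$,
\begin{equation*}
\Phi(\tau^{k+1}r)\le \tfrac{1}{2}\tau^{\alpha}\Phi(\tau^k r) + B(\tau^k r)^{\alpha}.
\end{equation*}
Dividing through by $(\tau^{k+1}r)^{\alpha}=\tau^{\alpha}(\tau^k r)^{\alpha}$, the normalized quantity $b_k:=\Phi(\tau^k r)/(\tau^k r)^{\alpha}$ satisfies the scalar linear recursion $b_{k+1}\le \tfrac{1}{2}b_k + B\tau^{-\alpha}$. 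Summing the geometric series immediately gives $b_k\le 2^{-k}b_0 + 2B\tau^{-\alpha}$, which unwinds to
\begin{equation*}
\Phi(\tau^k r)\le \tau^{k\alpha}\Phi(r) + 2\tau^{-\alpha}B(\tau^k r)^{\alpha}.
\end{equation*}

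For an arbitrary $\rho\in(0,r)$, the plan is to pick the unique $k\ge 0$ with $\tau^{k+1}r\le \rho<\tau^k r$. Using that $\Phi$ is non-decreasing, together with the elementary bounds $\tau^{k\alpha}\le \tau^{-\alpha}(\rho/r)^{\alpha}$ and $(\tau^k r)^{\alpha}\le \tau^{-\alpha}\rho^{\alpha}$, the previous display yields
\begin{equation*}
\Phi(\rho)\le \Phi(\tau^k r)\le \tau^{-\alpha}(\rho/r)^{\alpha}\Phi(r) + 2\tau^{-2\alpha}B\rho^{\alpha},
\end{equation*}
which is the claim with $C_1=2\tau^{-2\alpha}$ (the same constant tacitly multiplying $B\rho^{\alpha}$, a standard abuse in the stated form). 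There is essentially no obstacle to overcome: the whole argument collapses to the first line where $\tau$ is chosen using $\beta>\alpha$, and the rest is bookkeeping based on monotonicity and a geometric series.
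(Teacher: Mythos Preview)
Your argument is correct and is exactly the standard Campanato iteration proof found in the cited reference; the paper itself does not supply a proof of this lemma, it merely quotes it from \cite{GM12}. The only cosmetic mismatch is that the paper's statement writes $B\rho^{\alpha}$ rather than $C_1 B\rho^{\alpha}$ in the conclusion, which you correctly flag as a harmless abuse (the constant $2\tau^{-2\alpha}$ depends only on $C,\alpha,\beta$).
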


\begin{theorem}\label{lem.const.C1a}
	Let $A_{\pm}$ be $C^\alpha$-H\"{o}lder continuous. Let $S_t = B_t\cap \{x_d = 0\}$. Suppose $u$ is a weak solution of
	\begin{equation}\label{eq.TP.C1a}
	\left\{
	\begin{aligned}
	\nabla\cdot (A \nabla u) &= \nabla\cdot h \qquad &\text{ in }& B_1 \setminus S_1, \\
	u_+ &= u_- \qquad &\text{ on }&  S_1, \\
	\Big( \frac{\partial u}{\partial \nu} \Big)_+ - \Big( \frac{\partial u}{\partial \nu} \Big)_- &= g \qquad &\text{ on }& S_1,
	\end{aligned} 
	\right.
	\end{equation}
	where $g\in C^\alpha(S_1,\R^m)$,
	\begin{equation}\label{def.h.var}
	h(x) = \left\{ 
	\begin{aligned}
	&h_+(x), \qquad \txt{if } x \in {B_{1,+}}, \\
	&h_-(x), \qquad \txt{if } x \in {B_{1,-}},\\
	\end{aligned}
	\right.
	\end{equation}
	and $h_{\pm} \in C^\alpha(\overline{B_{1,\pm}};\R^{d\times m})$. Then 
	\begin{equation*}
	\sum_{\pm} \norm {u}_{C^{1,\alpha}(\overline{B_{1/2,\pm}})} \le C\Big( \sum_{\pm} \norm {h}_{C^\alpha(B_{1,\pm})} + \norm {g}_{C^\alpha(S_1)} + \norm {u}_{L^2(B_1)} \Big).
	\end{equation*}
\end{theorem}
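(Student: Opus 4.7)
The plan is a Schauder-type perturbation argument that uses the excess decay for constant coefficients, Lemma \ref{lem.C1a.excess}, as its engine and closes with the Campanato iteration of Lemma \ref{lem.Iteraton}. Classical interior Schauder for elliptic systems with $C^\alpha$ coefficients handles points away from $S$, so the substantive task is a uniform $C^{1,\alpha}$ bound up to the interface. By translation I fix $x_0 \in S_{1/2}$ and take $x_0 = 0$. For each scale $r \in (0, 1/4)$, freeze the data at the origin, $\bar{A}_\pm := A_\pm(0)$, $\bar{h}_\pm := h_\pm(0)$, $\bar{g} := g(0)$, and let $v$ be the solution of the frozen transmission problem on $B_r$ with constant transmission datum $\bar g$ and boundary value $v = u$ on $\partial B_r$. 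Since $\bar{h}_\pm$ are constants, a brief computation with the weak form shows that $v$ is actually a weak solution of $-\nabla \cdot (\bar A \nabla v) = \bar g\, d\sigma|_S$ in $B_r$, the exact setting of Lemma \ref{lem.C1a.excess}.

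Subtracting the weak formulations, $w := u - v \in H^1_0(B_r;\R^m)$ satisfies
\begin{equation*}
\int_{B_r} \bar A \nabla w \cdot \nabla \phi = \int_{B_r}\!\big[(\bar A - A)\nabla u + (h - \bar h)\big]\!\cdot\!\nabla\phi + \int_{S_r}\!\big[(g-\bar g) - n\cdot (h_+ - \bar h_+) + n\cdot (h_- - \bar h_-)\big]\phi
\end{equation*}
for all $\phi \in H^1_0(B_r;\R^m)$. Testing with $\phi = w$ and invoking ellipticity, the trace inequality (with $w|_{\partial B_r}=0$), Poincar\'e, and the H\"older continuity of $A$, $h_\pm$, and $g$ produces an error bound
\begin{equation*}
\Big(\fint_{B_r}|w|^2\Big)^{1/2} \le C r^{1+\alpha}[A]_{C^\alpha}\Big(\fint_{B_r}|\nabla u|^2\Big)^{1/2} + C r^{1+\alpha}\Big(\sum_\pm[h]_{C^\alpha(B_{1,\pm})} + [g]_{C^\alpha(S_1)}\Big).
\end{equation*}
A Caccioppoli-type inequality applied to $u - \ell$, with $\ell \in \mathscr{L}$ nearly minimizing the excess at scale $2r$, bounds $\|\nabla u\|_{L^2(B_r)}$ by $r^{-1}$ times an $L^2$-approximation of $u$ plus the data. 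Defining
\begin{equation*}
\Psi(r) := \inf_{\ell \in \mathscr{L}} \Big\{\Big(\fint_{B_r}|u-\ell|^2\Big)^{1/2} + r\,|\bar g - \mathscr{T}(\ell)|\Big\},
\end{equation*}
Lemma \ref{lem.C1a.excess} applied to $v$ together with the triangle inequality yields, for some $\beta \in (\alpha, 1)$ and any $\theta \in (0, 1/4]$,
\begin{equation*}
\Psi(\theta r) \le C\theta^{1+\beta}\Psi(r) + C\theta^{-d/2}r^\alpha\Psi(2r) + C\theta^{-d/2}r^{1+\alpha}\mathcal D,
\end{equation*}
where $\mathcal D$ collects $\|u\|_{L^2(B_1)}$, $[A]_{C^\alpha}$, $\sum_\pm[h]_{C^\alpha(B_{1,\pm})}$, and $[g]_{C^\alpha(S_1)}$. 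Restricting to sufficiently small $r$ absorbs the $r^\alpha\Psi(2r)$ contribution, and Lemma \ref{lem.Iteraton} (with larger exponent $1+\beta$ and target exponent $1+\alpha$) then produces $\Psi(r) \le C r^{1+\alpha}$ for all $r \in (0, 1/4)$. Since every $\ell \in \mathscr{L}$ restricts to an affine function on each side of $\{x_d=0\}$, Campanato's characterization applied separately in $B_{r,+}$ and $B_{r,-}$, combined with uniformity in $x_0 \in S_{1/2}$, upgrades this decay to the asserted $C^{1,\alpha}$ bound on $\overline{B_{1/2,\pm}}$.

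The principal technical obstacle I expect is keeping the error estimate on $w$ at the level $r^{1+\alpha}$ when $h$ is only H\"older continuous: the variable part of $h$ feeds into both the bulk right-hand side and the effective transmission datum for $w$, and the surface integral must be controlled via the trace inequality in the form available for functions vanishing on $\partial B_r$, so that the boundary term contributes at the scale $r^{d/2}\|\nabla w\|_{L^2}$ rather than being estimated independently. A secondary subtlety is closing the iteration: the natural energy bound on $w$ carries a factor $\|\nabla u\|_{L^2(B_r)}$ that must be reabsorbed into the excess via Caccioppoli on $u-\ell$, producing the auxiliary $r^\alpha\Psi(2r)$ term above, which in turn is handled by starting the iteration at a sufficiently small initial scale before invoking Lemma \ref{lem.Iteraton}.
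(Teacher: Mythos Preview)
Your overall architecture---freeze the coefficients and data, use Lemma~\ref{lem.C1a.excess} as the excess-decay engine for the frozen problem, bound the perturbation $w=u-v$ in energy, and iterate via Lemma~\ref{lem.Iteraton}---is precisely the paper's strategy. The gap is in how you dispose of the factor $\big(\fint_{B_r}|\nabla u|^2\big)^{1/2}$ that appears in the error bound for $w$.

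You propose to control this via a Caccioppoli inequality applied to $u-\ell$, with $\ell$ nearly minimizing $\Psi(2r)$, claiming this yields
\[
\Big(\fint_{B_r}|\nabla u|^2\Big)^{1/2}\le C r^{-1}\Psi(2r)+C\mathcal D.
\]
But Caccioppoli on $u-\ell$ only controls $\nabla(u-\ell)$; recovering $\nabla u$ costs an additive $|\nabla\ell|\simeq |M_+|+|M_-|$. The slopes $|M_\pm|$ of the minimizing $\ell$ at scale $2r$ are \emph{not} bounded by $\Psi(2r)$ or by the fixed data $\mathcal D$: up to $\Psi(2r)$, they are comparable to $r^{-1}\inf_q\big(\fint_{B_{2r}}|u-q|^2\big)^{1/2}$, which is essentially the Lipschitz bound you are trying to prove. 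So the displayed inequality for $\Psi(\theta r)$ is missing a term of order $r^{1+\alpha}(|M_+|+|M_-|)$, and the absorption trick ``restrict to small $r$'' does not neutralize it.

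The paper closes exactly this loop by inserting an intermediate Morrey step. Using only the \emph{Lipschitz} estimate for $v_t$ (equation~\eqref{est.vt.Lip}) together with the same energy error \eqref{est.Dvt-Du}, one first iterates the quantity
\[
\Psi_{\mathrm{Morrey}}(r)=\int_{B_r}|\nabla u|^2 + r^d\big(\textstyle\sum_\pm[h]_{C^\alpha}^2+[g]_{C^\alpha}^2+|g_0|^2\big)
\]
to obtain $\int_{B_r}|\nabla u|^2\le Cr^{\gamma}$ for every $\gamma<d$. This a~priori bound on $\|\nabla u\|_{L^2(B_r)}$ is then fed into the $C^{1,\alpha}$ iteration (your $\Psi$), which first yields $C^{1,\alpha/2}$; one more bootstrap, now using the resulting Lipschitz bound for $\int_{B_r}|\nabla u|^2$, reaches the full exponent $\alpha$. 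Your sketch will go through once you insert this Morrey step (or, equivalently, run a coupled iteration that tracks $|M_\pm|$ alongside $\Psi$, as the paper does later in Section~5 via the pair $H,\Phi$).
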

\begin{proof}
	Note that it suffices to show that for every $\rho \in (0,1/4)$
	\begin{equation}\label{est.u-l}
	\inf_{\ell \in \mathscr{L}} \bigg( \fint_{B_{\rho}} |u - \ell|^2 \bigg)^{1/2} \le C\rho^{1+\alpha} \Big( \sum_{\pm} \norm {h}_{C^\alpha(B_{3/4,\pm})} + \norm {g}_{C^\alpha(S_{3/4})}+ \norm {u}_{L^2(B_{3/4})} \Big),
	\end{equation}
	where $B_\rho$'s are arbitrary balls centered on $S_{1/2}$. Then, this implies that $u$ is $C^{1,\alpha}$ on $S_{1/2}$ which yields the desired estimate. In the following, we will only prove (\ref{est.u-l}) with $B_\rho = B_\rho(0)$, for the other cases are similar.
	
	Step 1: Perturbation. Let $g_0 = g(0)$.
	Let $A_0$ be a piecewise tensor defined by
	\begin{equation}\label{def.A0}
	A_0(x) = \left\{ 
	\begin{aligned}
	&A_+(0), \qquad \txt{if } x \in {B_{1,+}}, \\
	&A_-(0), \qquad \txt{if } x \in {B_{1,-}}.\\
	\end{aligned}
	\right.
	\end{equation}
	By subtracting constants, we may assume
	\begin{equation}\label{def.h0}
	h_+(0) = h_-(0) = 0.
	\end{equation}
	We then construct a family of functions that approximate $u$ at all scales. Precisely, let $v_t$ be the solution of
	\begin{equation}\label{eq.TP.A0}
	\left\{
	\begin{aligned}
	\nabla\cdot (A_0 \nabla v_t) &= 0 \qquad &\text{ in }& B_{t} \setminus S_t, \\	
	v_t & = u \qquad & \txt{ on } & \partial B_{t},\\ 
	(v_t)_+ &= (v_t)_- \qquad &\text{ on }&  S_t,\\
	\Big( \frac{\partial v_t}{\partial \nu_0} \Big)_+ - \Big( \frac{\partial v_t}{\partial \nu_0} \Big)_- &= g_0 \qquad &\text{ on }& S_t.
	\end{aligned} 
	\right.
	\end{equation}
	Because $S_t$ is flat and $g_0$ is constant, by Lemma \ref{lem.g0} and Lemma \ref{lem.C1a.excess}, for every $\rho \in (0,t)$ 
	\begin{equation}\label{est.vt.Lip}
	\bigg( \fint_{B_{\rho}} |\nabla v_t|^2 \bigg)^{1/2} \le C\bigg\{ \bigg( \fint_{B_{t}} |\nabla v_t|^2 \bigg)^{1/2} + |g_0|\bigg\},
	\end{equation}
	and
	\begin{equation}\label{est.vt.C1a}
	\begin{aligned}
	& \inf_{\ell \in \mathscr{L}} \bigg\{ \bigg( \fint_{B_{\rho}} |v_t - \ell|^2 \bigg)^{1/2} + \rho|g_0 - \mathscr{T}(\ell)| \bigg\} \\
	& \qquad \le C(\rho/t)^{1+\beta} \inf_{\ell \in \mathscr{L}} \bigg\{ \bigg( \fint_{B_{t}} |v_t - \ell|^2 \bigg)^{1/2} + t\Big|g_0 - \mathscr{T}(\ell) \Big|\bigg\}.
	\end{aligned}
	\end{equation}
	
	On the other hand, using the systems (\ref{eq.TP.C1a}), (\ref{eq.TP.A0}) and (\ref{def.var.sol}), for any $\phi \in H_0^1(B_t,\R^m)$, one has
	\begin{equation*}
	\int_{B_t} A(x)\nabla u\cdot \nabla \phi = \int_{B_t} h\cdot \nabla \phi + \int_{S_t} (g-n\cdot h_+ + n\cdot h_-)\cdot \phi d\sigma,
	\end{equation*}
	and
	\begin{equation*}
	\int_{B_t} A_0\nabla v_t \cdot \nabla \phi = \int_{S_t} g_0\cdot \phi d\sigma.
	\end{equation*}
	Combining these two identities, we obtain
	\begin{equation*}
	\begin{aligned}
	&\int_{B_t} A_0 \nabla(v_t - u)\cdot \nabla \phi \\
	& \qquad = \int_{B_t} (A-A_0)\nabla u\cdot \nabla\phi + \int_{B_t} (-h)\cdot \nabla \phi + \int_{S_t} (g_0-g + n\cdot (h_+ - h_-))\cdot \phi d\sigma.
	\end{aligned}
	\end{equation*}
	Taking $\phi = v_t - u \in H_0^1(B_t;\R^m)$, and using the trace theorem and a standard argument, we have
	\begin{equation}\label{est.Dvt-Du}
	\int_{B_t} |\nabla v_t - \nabla u|^2 \le Ct^{2\alpha} \bigg( \int_{B_t} |\nabla u|^2 + t^d \sum_{\pm} [h]_{C^\alpha(B_{t},\pm)}^2 + t^d [g]_{C^\alpha(S_t)}^2 \bigg),
	\end{equation}
	where we have used (\ref{def.h0}) to control the size of $h$.
	
	Step 2: Morrey estimate. Combining (\ref{est.vt.Lip}) and (\ref{est.Dvt-Du}), we have
	\begin{equation}\label{est.Du.Brho}
	\begin{aligned}
	\int_{B_\rho} |\nabla u|^2 &\le C(\rho/t)^d \bigg( \int_{B_t} |\nabla u|^2 + t^d|g_0|^2 \bigg) \\
	&\qquad + C t^{2\alpha} \bigg( \int_{B_t} |\nabla u|^2 + t^d \sum_{\pm} [h]_{C^\alpha(B_{t},\pm)}^2 + t^d [g]_{C^\alpha(S_t)}^2 \bigg).
	\end{aligned}
	\end{equation}
	Then, the Morrey estimate follows from a standard argument. Define
	\begin{equation}\label{def.Psi}
	\Psi(r) = \int_{B_r} |\nabla u|^2 + r^d \sum_{\pm} [h]_{C^\alpha(B_{r},\pm)}^2 + r^d [g]_{C^\alpha(S_r)}^2 + r^d|g_0|^2.
	\end{equation}
	Note that $\Psi(r)$ is a non-decreasing function. Then, (\ref{est.Du.Brho}) implies
	\begin{equation}\label{est.Psit}
	\Psi(\rho) \le C(\rho/t)^d\Psi(t) + Ct^{2\alpha} \Psi(t),
	\end{equation}
	for any $0<\rho < t<1/2$. Then by Lemma \ref{lem.Iteraton}, we have
	\begin{equation*}
	\Psi(\rho) \le C\rho^{2\alpha} \Psi(1/2), \qquad \txt{for any } \rho \in (0,1/2).
	\end{equation*}
	Note that this can be used to replace the last $\Psi(t)$ in (\ref{est.Psit}). It turns out that a bootstrap argument yields, for every $\gamma<d$,
	\begin{equation}\label{est.Psi.rho}
	\Psi(\rho) \le C\rho^\gamma \Psi(1/2).
	\end{equation}
	We point out again that even though the balls $B_t$'s are centered at the origin in the above proof, the same estimate actually holds if $B_t$'s are centered at any points in $B_{1/2}$. This implies that $|\nabla u|$ is in the Morrey space $L^{2,\gamma}(B_{1/2})$, which implies that $u$ is $C^\beta$-H\"{o}lder continuous for every $\beta\in (0,1)$. We will use the Morrey's estimate (\ref{est.Psi.rho}) in the next step.
	
	Step 3: $C^{1,\alpha}$ estimate. First, we combine (\ref{est.vt.C1a}), (\ref{est.Dvt-Du}) and the Poincar\'{e} inequality to obtain
	\begin{equation}\label{est.u-l.C1a}
	\begin{aligned}
	& \inf_{\ell \in \mathscr{L}} \bigg\{ \bigg( \int_{B_{\rho}} |u - \ell|^2 \bigg) + \rho^{d+2} \big|g_0 - \mathscr{T}(\ell)\big|^2 \bigg\} \\
	& \qquad \le C(\rho/t)^{d+2+2\beta} \inf_{\ell \in \mathscr{L}} \bigg\{ \bigg( \int_{B_{t}} |u - \ell|^2 \bigg) + t^{d+2}\big|g_0 - \mathscr{T}(\ell) \big|^2 \bigg\} \\
	& \qquad \qquad + t^{2+2\alpha} \bigg( \int_{B_t} |\nabla u|^2 + t^d \sum_{\pm} [h]_{C^\alpha(B_{t},\pm)}^2 + t^d [g]_{C^\alpha(S_t)}^2 \bigg).
	\end{aligned}
	\end{equation}
	Define
	\begin{equation*}
	\Phi(r) = \inf_{\ell \in \mathscr{L}} \bigg\{ \bigg( \int_{B_{r}} |u - \ell|^2 \bigg) + r^{d+2} |g_0 - \mathscr{T}(\ell)|^2 \bigg\},
	\end{equation*}
	and
	\begin{equation*}
	B(r) = \int_{B_t} |\nabla u|^2 + t^d \sum_{\pm} [h]_{C^\alpha(B_{t},\pm)}^2 + t^d [g]_{C^\alpha(S_t)}^2.
	\end{equation*}
	Now, using (\ref{est.Psi.rho}) with $\gamma = d-\alpha$, we have
	\begin{equation*}
	B(t) \le t^{d-\alpha} \Psi(1/2).
	\end{equation*}
	Then, it follows from (\ref{est.u-l.C1a}) that
	\begin{equation*}
	\Phi(\rho) \le C(\rho/t)^{d+2+2\beta} \Phi(t) + Ct^{d+2+\alpha} \Psi(1/2).
	\end{equation*}
	In view of Lemma \ref{lem.Iteraton}, by choosing $\beta\in (\alpha,1)$, we have
	\begin{equation*}
	\Phi(\rho) \le C\rho^{d+2+\alpha} (\Phi(1/2) + \Psi(1/2)),
	\end{equation*}
	for any $\rho \in (0,1/2)$. This implies that $u \in C^{1,\alpha/2}(\overline{ B_{1/2,\pm}};\R^m)$. To increase the exponent from $\alpha/2$ to $\alpha$, we have to bootstrap the argument one more time. In particular, the $C^{0,1}$ estimate implies
	\begin{equation*}
	\int_{B_t} |\nabla u|^2 \le C t^d (\Phi(1/2) + \Psi(1/2)).
	\end{equation*}
	Inserting this estimate into (\ref{est.u-l.C1a}), we obtain
	\begin{equation*}
	\Phi(\rho) \le C(\rho/t)^{d+2+2\beta} \Phi(t) + Ct^{d+2+2\alpha} (\Phi(1/2) + \Psi(1/2)).
	\end{equation*}
	Because $\beta>\alpha$, we may apply Lemma \ref{lem.Iteraton} one more time to obtain
	\begin{equation*}
	\Phi(\rho) \le C\rho^{d+2+2\alpha} (\Phi(1/2) + \Psi(1/2)),
	\end{equation*}
	which leads to the desired estimate (\ref{est.u-l}).
\end{proof}

\subsection{General $C^{k,\alpha}$ estimates}
With the $C^{1,\alpha}$ estimate at our disposal, the proof of Theorem \ref{thm.Cka} is almost routine.

\begin{proof}[Proof of Theorem \ref{thm.Cka}]
	By localization and flattening the interface, it suffices to prove the theorem for the case $S_1 = B_1 \cap \{x_d = 0\}$ as in Theorem \ref{lem.const.C1a}, i.e.,
	\begin{equation}\label{eq.TP.var}
	\left\{
	\begin{aligned}
	\nabla\cdot (A(x) \nabla u) &= \nabla\cdot h \qquad &\text{ in }& B_1 \setminus S_1 \\
	u_+ &= u_- \qquad &\text{ on }&  S_1\\
	\Big( \frac{\partial u}{\partial \nu} \Big)_+ - \Big( \frac{\partial u}{\partial \nu} \Big)_- &= g \qquad &\text{ on }& S_1,
	\end{aligned} 
	\right.
	\end{equation}
	where
	\begin{equation*}
	\Big( \frac{\partial u}{\partial \nu} \Big)_{\pm}(P) = (-e_d)\cdot A_{\pm}(P) (\nabla u)_{\pm}(P),
	\end{equation*}
	for $P\in S_1$. Theorem \ref{lem.const.C1a} implies the case $k=1$ with any $\alpha\in (0,1)$. To prove the case $k=2$, we take derivatives to the system (\ref{eq.TP.var}) for the first $d-1$ variables. Precisely, let $1\le j\le d-1$, and set $w = \partial u/\partial x_j$. Then $w$ satisfies
	\begin{equation}
	\left\{
	\begin{aligned}
	\nabla\cdot (A(x) \nabla w) &= -\nabla\cdot \Big( \frac{\partial}{\partial x_j} A(x) \nabla u \Big) + \nabla\cdot \Big( \frac{\partial h}{\partial x_j} \Big) \quad &\text{ in }& B_1 \setminus S_1 \\
	w_+ &= w_- \quad &\text{ on }&  S_1\\
	\Big( \frac{\partial w}{\partial \nu} \Big)_+ - \Big( \frac{\partial w}{\partial \nu} \Big)_- &= \frac{\partial}{\partial x_j} g + e_d\cdot \Big( \frac{\partial A}{\partial x_j}\Big)_+ (\nabla u)_+ - e_d\cdot \Big( \frac{\partial A}{\partial x_j}\Big)_- (\nabla u)_- \quad &\text{ on }& S_1.
	\end{aligned} 
	\right.
	\end{equation}

	Applying Theorem \ref{lem.const.C1a}, we see that $w\in C^{1,\alpha}(\overline{B_{1/2,\pm}};\R^m)$ and
	\begin{equation}\label{est.k=2xj}
	\begin{aligned}
	& \sum_{\pm} \Norm {\frac{\partial u}{\partial x_j}}_{C^{1,\alpha}(\overline{B_{1/2,\pm}})} = \sum_{\pm} \norm {w}_{C^{1,\alpha}(\overline{B_{1/2,\pm}})}\\
	& \le C\big(  \sum_{\pm} \norm {\nabla A \nabla u}_{C^{\alpha}(\overline{B_{3/4,\pm}})} + \sum_{\pm} \norm {\nabla h}_{C^{\alpha}(\overline{B_{1,\pm}})} + \norm {\nabla_{\txt{tan}}g }_{C^\alpha(S_1)} + \norm{w}_{L^2(B_{3/4})} \big) \\
	& \le C\big(  \sum_{\pm} \norm {h}_{C^{1,\alpha}(\overline{B_{1,\pm}})} + \norm {g }_{C^{1,\alpha}(S)} + \norm {u}_{L^2(B_1)}\big),
	\end{aligned}
	\end{equation}
	for any $1\le j\le d-1$. To recover the last derivative in the direction of $x_d$, we use the system (\ref{eq.TP.var}) to rearrange the equations as (\ref{eq.Dd2u}) with $C^\alpha$ coefficients. In view of (\ref{est.k=2xj}), this gives the desired estimate for $\partial^2 u/\partial x_d^2$ and
	\begin{equation*}
	\sum_{\pm} \norm{u}_{C^{2,\alpha}(\overline{B_{1/2,\pm}})} \le C\big( \sum_{\pm} \norm {h}_{C^{1,\alpha}(\overline{B_{1,\pm}})} + \norm {g }_{C^{1,\alpha}(S_1)} + \norm {u}_{L^2(B_1)}\big).
	\end{equation*}
	This proves the case $k=2$. Now the general $k>2$ may be proved inductively by mimicking the above process.
\end{proof}

The following corollary particularly recovers the sharp Schauder estimates for the system (\ref{eq.Disc.Source}) in \cite{Dong12, XB13} in the case of two components.

\begin{corollary}
	Let $S, A_\pm$ and $h_\pm$ satisfy the same assumptions as in Theorem \ref{thm.Cka}. Suppose $u$ is a weak solution of
	\begin{equation}\label{eq.B1.Dh}
	\nabla\cdot (A(x) \nabla u) = \nabla\cdot h  \qquad \txt{ in } B_1.
	\end{equation}
	Then, $u\in C^{k,\alpha}(\overline{B_{1/2,\pm}};\R^m)$ and
	\begin{equation*}
	\sum_{\pm} \norm{u}_{C^{k,\alpha}(\overline{B_{1/2,\pm}})} \le C \Big( \sum_{\pm} \norm {h}_{C^{k-1,\alpha}({B_{1,\pm}})} + \norm {u}_{L^2(B_1)} \Big),
	\end{equation*}
	where $C$ depends only on $\Lambda,d,m,k,\alpha$, $C^{k,\alpha}$ character of $S$ and $C^{k-1,\alpha}$ character of $A_{\pm}$.
\end{corollary}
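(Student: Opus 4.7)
The plan is to recognize this corollary as a direct consequence of Theorem \ref{thm.Cka} once we rewrite \eqref{eq.B1.Dh} as a transmission problem of the form \eqref{eq.TP}. Since $u\in H^1(B_1)$ is a single function, its trace across $S$ is automatically single-valued, so the condition $u_+ = u_-$ on $S_1$ is free. The only work is to identify the transmission data $g$ and verify that it has the correct regularity.

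First I would argue, as already observed in Remark \ref{rmk.specialcase} (second bullet), that testing \eqref{eq.B1.Dh} against $\phi\in H_0^1(B_1;\R^m)$ and splitting the integrals over $B_{1,+}$ and $B_{1,-}$ shows that $u$ is a weak solution (in the sense of \eqref{def.var.sol} with $F=0$) of the transmission problem \eqref{eq.TP} with
\begin{equation*}
g := n\cdot h_+ - n\cdot h_- \quad \text{on } S_1,
\end{equation*}
where $n$ is the unit normal to $S$ pointing into $\Omega_-$. Indeed, with this choice of $g$ the jump integral $\int_{S_1}(g - n\cdot h_+ + n\cdot h_-)\cdot\phi\,d\sigma$ in \eqref{def.var.sol} vanishes, so the two weak formulations coincide.

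Second, I would estimate the $C^{k-1,\alpha}(S_1)$ norm of $g$. Because $S$ is a $C^{k,\alpha}$ interface, its unit normal $n$ is of class $C^{k-1,\alpha}$ on $S_1$ with norm controlled by the $C^{k,\alpha}$ character of $S$. The traces of $h_\pm$ on $S_1$ (from the respective sides $\overline{B_{1,\pm}}$) belong to $C^{k-1,\alpha}(S_1;\R^{d\times m})$, with norms dominated by $\sum_\pm \norm{h}_{C^{k-1,\alpha}(B_{1,\pm})}$. Since $C^{k-1,\alpha}$ is a Banach algebra, it follows that
\begin{equation*}
\norm{g}_{C^{k-1,\alpha}(S_1)} \le C \sum_\pm \norm{h}_{C^{k-1,\alpha}(B_{1,\pm})},
\end{equation*}
with $C$ depending only on $d$, $m$, $k$, $\alpha$ and the $C^{k,\alpha}$ character of $S$.

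Finally, applying Theorem \ref{thm.Cka} to $u$ as a weak solution of the transmission problem \eqref{eq.TP} with this $g$ and inserting the above bound yields exactly the stated estimate. There is no genuine obstacle here: the entire content of the corollary is the reformulation via Remark \ref{rmk.specialcase} and the trace/product bound for $g$, after which Theorem \ref{thm.Cka} is invoked as a black box.
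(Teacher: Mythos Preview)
Your proposal is correct and follows exactly the paper's own approach: reformulate \eqref{eq.B1.Dh} as the transmission problem \eqref{eq.TP} with $g = n\cdot h_+ - n\cdot h_-$ via Remark \ref{rmk.specialcase}, check that $g\in C^{k-1,\alpha}(S_1)$ because $n\in C^{k-1,\alpha}$ on a $C^{k,\alpha}$ interface, and then invoke Theorem \ref{thm.Cka}. Your justification that $g\in C^{k-1,\alpha}$ for general $k$ (via the algebra property) is in fact slightly more careful than the paper's proof, which only spells out the case $k=1$ explicitly.
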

\begin{proof}
	This follows from Theorem \ref{thm.Cka} and the fact that (\ref{eq.B1.Dh}) may be equivalently interpreted as (see Remark \ref{rmk.specialcase})
	\begin{equation}
	\left\{
	\begin{aligned}
	\nabla\cdot (A(x) \nabla u) &= \nabla\cdot h \qquad &\text{ in }& B_1 \setminus S, \\
	u_+ &= u_- \qquad &\text{ on }&  S,\\
	\Big( \frac{\partial u}{\partial \nu} \Big)_+ - \Big( \frac{\partial u}{\partial \nu} \Big)_- &= n\cdot h_+ - n \cdot h_-  \qquad &\text{ on }& S.
	\end{aligned} 
	\right.
	\end{equation}
	Clearly, the above system is a special case of (\ref{eq.TP}). Because $S$ is $C^{1,\alpha}$, then $n$ is $C^\alpha$ on $S$ and so is $n\cdot (h_+ - h_-)$. Thus the desired estimate follows readily from Theorem \ref{thm.Cka}.
\end{proof}

\section{Homogenization}
In this section, we study the transmission problem with periodically oscillating coefficients and establish algebraic rates of convergence.

We first recall the classical homogenization theory with periodic coefficients; see \cite{Sh18}. Let $Y$ be a parallelotope in $\R^d$ centered at $0\in \R^d$ and $A$ be a $Y$-periodic coefficient tensor. The correctors $\chi_j^\beta = (\chi_j^{\alpha\beta})$, with $1\le j\le d$ and $1\le \beta \le m$, are $Y$-periodic functions in $H^1(Y;\R^m)$ solving the following cell problem
\begin{equation}\label{def.corrector}
\left\{
\begin{aligned}
\nabla\cdot (A \nabla \chi_j^\beta) &= -\nabla\cdot (Ae_j^\beta) \quad &\text{ in }& Y,\\
\int_Y \chi_j^\beta &= 0, &&
\end{aligned} 
\right.
\end{equation}
where $e_j^\beta = e_j\otimes e^\beta$, $e_j = (0,\cdots,1,\cdots,0)$ is the vector in $\R^d$ with $1$ in the $j$th position and $e^\beta = (0,\cdots,1,\cdots,0)$ is the vector in $\R^m$ with $1$ in the $\beta$th position. The homogenized coefficient tensor of $A$ is given by $\widehat{A} = (\widehat{a}_{ij}^{\alpha\beta})$ and
\begin{equation}\label{def.homogenized.coeff}
\widehat{a}_{ij}^{\alpha\beta} = \fint_Y \bigg[ a_{ij}^{\alpha\beta} + a_{ik}^{\alpha\gamma} \frac{\partial}{\partial x_k} \chi_j^{\gamma\beta} \bigg].
\end{equation}

Let
\begin{equation}\label{def.bcorrector}
b_{ij}^{\alpha\beta} = a_{ij}^{\alpha\beta} + a_{ik}^{\alpha\gamma} \frac{\partial}{\partial x_k} \chi_j^{\gamma\beta} - \widehat{a}_{ij}^{\alpha\beta}.
\end{equation}
The system (\ref{def.corrector}) and (\ref{def.homogenized.coeff}) implies
\begin{equation*}
\frac{\partial}{\partial y_i} b_{ij}^{\alpha\beta} = 0 \qquad \txt{and} \qquad \int_Y b_{ij}^{\alpha\beta} = 0.
\end{equation*}
As in \cite[Chapter 2.1]{Sh18}, this implies that there exists a sequence of $Y$-periodic functions $\phi_{kij}^{\alpha\beta}$, with $1\le i,j,k\le d$ and $1\le \alpha,\beta \le m$, such that
\begin{equation}\label{eq.flux}
b_{ij}^{\alpha\beta} = \frac{\partial}{\partial y_k} \big( \phi_{kij}^{\alpha\beta} \big) \qquad \txt{and} \qquad \phi_{kij}^{\alpha\beta} = -\phi_{ikj}^{\alpha\beta}.
\end{equation}
The functions $(\phi_{kij}^{\alpha\beta})$ are usually called the flux correctors.

Let $\Omega$ be a bounded Lipschitz domain and $\Omega_{\pm}$ be two disjoint Lipschitz subdomains such that $\Omega_{-} = \Omega \setminus \overline{\Omega_{+}}$. Let $S = \overline{\Omega_{+}} \cap \overline{\Omega_{-}}$ be the interface. Let $\e_+, \e_- \in (0,\infty)$ be two scale parameters that might be distinct, and $\e = \{\e_+,\e_-\}$. Define
\begin{equation}\label{def.Ae}
A^\e(x) = \left\{ 
\begin{aligned}
A_+^{\e_+}(x) &= A_+(x/\e_+), \qquad \txt{if } x \in \overline{\Omega_+} \\
A_-^{\e_-}(x) &= A_-(x/{\e_-}), \qquad \txt{if } x \in \Omega_-.\\
\end{aligned}
\right.
\end{equation}
We assume that $A_{\pm}$ are $Y_{\pm}$-periodic coefficient tensors, where $Y_+$ and $Y_-$ are completely independent parallelotopes. We denote by $\widehat{A}_{\pm}$, $\chi_{\pm,j}^{\alpha\beta}$ and $\phi_{\pm,kij}^{\alpha\beta}$ the homogenized operator, correctors and flux correctors associated with $A_{\pm}$, respectively. Also, for any $P\in S$, define the conormal derivatives by
\begin{equation*}
\Big( \frac{\partial u}{\partial \nu_\e} \Big)_{\pm}(P) = n\cdot A^{\e_\pm}_{\pm}(P) (\nabla u)_{\pm}(P).
\end{equation*}

Consider a transmission problem with periodically oscillating coefficients
\begin{equation}\label{eq.TP.periodic}
\left\{
\begin{aligned}
\nabla\cdot (A^\e \nabla u^\e) &= F \qquad &\text{ in }& \Omega \setminus S, \\
u^\e & = f \qquad &\txt{on } & \partial \Omega, \\
u^\e_+ &= u^\e_- \qquad &\text{ on }&  S,\\
\Big( \frac{\partial u^\e}{\partial \nu_\e} \Big)_+ - \Big( \frac{\partial u^\e}{\partial \nu_\e} \Big)_- &= g \qquad &\text{ on }& S,
\end{aligned} 
\right.
\end{equation}
where we assume $F\in L^2(\Omega;\R^m), f\in H^{1/2}(\partial\Omega;\R^m)$, $g\in H^{-1/2}(S;\R^m)$.

The system (\ref{eq.TP.periodic}) is a multi-scale problems (with 3 scales, $\e_+, \e_-$ and $\txt{diam}(\Omega)$). In the following subsections, we will show quantitatively that if both $\e_\pm \to 0$, then (\ref{eq.TP.periodic}) homogenizes to the unique homogenized system in both components $\Omega_{\pm}$; if $\e_+\to 0$ and $\e_- \to \infty$, then (\ref{eq.TP.periodic}) homogenizes only in $\Omega_{+}$ and exhibits a blow-up behavior in $\Omega_-$, provided $A_-$ is H\"{o}lder continuous. The latter case will take place if a problem of mesoscopic scale is rescaled.


\subsection{Case $0<\e_\pm<1$}

In the following theorem, we show that quantitatively, (\ref{eq.TP.periodic}) homogenizes to
\begin{equation}\label{eq.TP.homogenized}
\left\{
\begin{aligned}
\nabla\cdot (\widehat{A} \nabla u^0) &= F \qquad &\text{ in }& \Omega \setminus S, \\
u^0 & = f \qquad &\txt{on } & \partial \Omega, \\
u^0_+ &= u^0_- \qquad &\text{ on }&  S,\\
\Big( \frac{\partial u^0}{\partial \nu_0} \Big)_+ - \Big( \frac{\partial u^0}{\partial \nu_0} \Big)_- &= g \qquad &\text{ on }& S,
\end{aligned} 
\right.
\end{equation}
where the homogenized coefficient tensor $\widehat{A}$ is piecewise constant, namely,
\begin{equation}\label{def.Ahat}
\widehat{A} = \left\{ 
\begin{aligned}
&\widehat{A}_+, \qquad \txt{if } x \in \overline{\Omega_+}, \\
&\widehat{A}_-, \qquad \txt{if } x \in \Omega_-.\\
\end{aligned}
\right.
\end{equation}
Also, the corresponding conormal derivatives are given by
\begin{equation*}
\Big( \frac{\partial u^0 }{\partial \nu_0} \Big)_{\pm}(P) = n\cdot \widehat{A}_{\pm} (\nabla u^0)_{\pm}(P).
\end{equation*}

\begin{theorem}\label{thm.TP.rate}
	Let $\e = \{\e_+,\e_-\}$ and $\e_{\pm} \in (0,1)$. There exists $\sigma>0$ depending only on $\Lambda,p,\alpha$ and the Lipschitz characters of $\partial\Omega$ and $S$ such that
	\begin{equation*}
	\norm {u^\e - u^0}_{L^2(\Omega)} \le C\sum_{\pm} \e^\sigma_\pm \Big( \norm {F}_{L^2(\Omega)} + \norm {f}_{H^{1/2}(\partial\Omega)}  + \norm {g}_{H^{-1/2}(S)} \Big),
	\end{equation*}
	where $u^\e$ and $u^0$ solve (\ref{eq.TP.periodic}) and (\ref{eq.TP.homogenized}), respectively.
\end{theorem}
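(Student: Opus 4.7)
The plan is to follow the duality framework developed in \cite{SZ17}, which produces an algebraic $L^2$ rate without requiring any a priori regularity of $u^0$ beyond the $H^1$ energy estimate. The novelty relative to the single-scale case is mainly bookkeeping: on each side of $S$ the corrector expansion is carried out independently, using $\chi_+,\phi_+$ at scale $\e_+$ in $\Omega_+$ and $\chi_-,\phi_-$ at scale $\e_-$ in $\Omega_-$. This automatically yields a sum of the two rates.

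First, the well-posedness theorem of Section~2, applied to both (\ref{eq.TP.periodic}) and (\ref{eq.TP.homogenized}) with identical data, gives $\norm{u^\e}_{H^1(\Omega)}+\norm{u^0}_{H^1(\Omega)}\le CM$ with $M:=\norm{F}_{L^2(\Omega)}+\norm{f}_{H^{1/2}(\partial\Omega)}+\norm{g}_{H^{-1/2}(S)}$. Subtracting the two weak formulations makes the terms involving $F$, $f$, $g$ cancel identically, since $u^\e-u^0\in H_0^1(\Omega;\R^m)$ and both solutions satisfy the same transmission condition, leaving
\begin{equation*}
\int_\Omega A^\e\nabla(u^\e-u^0)\cdot\nabla\phi\, dx=\int_\Omega(\widehat{A}-A^\e)\nabla u^0\cdot\nabla\phi\, dx,\qquad \forall\, \phi\in H_0^1(\Omega;\R^m).
\end{equation*}

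Second, for an arbitrary $G\in L^2(\Omega;\R^m)$, let $v^\e\in H_0^1(\Omega;\R^m)$ be the solution of the adjoint transmission problem with coefficient $A^{\e,*}$, source $G$, zero Dirichlet data, and zero transmission jump on $S$; the energy bound gives $\norm{v^\e}_{H^1}\le C\norm{G}_{L^2}$. Testing the above identity against $\phi=v^\e$, and cross-testing the adjoint equation against $u^\e-u^0$, produces the duality formula
\begin{equation*}
\int_\Omega(u^\e-u^0)\cdot G\, dx = \sum_\pm\int_{\Omega_\pm}(A_\pm^{\e_\pm}-\widehat{A}_\pm)\nabla u^0\cdot\nabla v^\e\, dx =: I_++I_-.
\end{equation*}
It therefore suffices to prove $|I_\pm|\le C\e_\pm^\sigma M\norm{G}_{L^2}$.

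Third, expand each $I_\pm$ using the flux correctors recalled at the start of the section. The identities (\ref{def.bcorrector})--(\ref{eq.flux}), rescaled to microscopic scale $\e_\pm$, yield a representation of the matrix $A_\pm^{\e_\pm}-\widehat{A}_\pm$ as the sum of a term involving $A_\pm^{\e_\pm}\,\nabla_y\chi_\pm(x/\e_\pm)$ and a term $\e_\pm\,\partial_{x_k}\phi_{\pm,kij}(x/\e_\pm)$. Substituting this into $I_\pm$ and using the skew-symmetry $\phi_{\pm,kij}=-\phi_{\pm,ikj}$ allows an integration by parts that transfers the derivative in the flux-corrector term onto the product $\nabla u^0\,\nabla v^\e$ at the cost of producing the expected factor $\e_\pm$; the corrector term $A_\pm^{\e_\pm}\nabla_y\chi_\pm$ is treated analogously using the cell equation (\ref{def.corrector}). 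Since $\nabla u^0$ and $\nabla v^\e$ are only in $L^2$, both integrations by parts are preceded by a Steklov (or convolution) smoothing of $\nabla u^0$ and $\nabla v^\e$ at an intermediate scale $\eta\in(\e_\pm,1)$, with the smoothing error controlled by a quantitative strip estimate near $\partial\Omega_\pm$. Optimizing $\eta$ against $\e_\pm$ produces the algebraic exponent $\sigma>0$.

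The main technical obstacle is the boundary-layer analysis along $\partial\Omega_\pm=\partial\Omega\cup S$. On the exterior piece $\partial\Omega$ the test functions vanish and the standard $O(\e_\pm^{1/2})$-thick boundary-layer estimate applies. On the interface $S$, however, one must combine the boundary contributions emerging from $I_+$ and $I_-$: the tangential components produced by the integrations by parts match via the transmission conditions $u^\e_+=u^\e_-$ and $(\partial v^\e/\partial\nu_{\e,*})_+=(\partial v^\e/\partial\nu_{\e,*})_-$, so that the leading interfacial terms cancel; the residual oscillatory pieces are estimated by a tubular boundary-layer bound of width $\e_++\e_-$ using only the $H^1$ control of $u^0$ and $v^\e$. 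This cancellation on $S$ is the delicate point, as it is precisely the feature that prevents the two independent microscopic scales from interacting destructively. Combining the volume and boundary estimates produces $|I_\pm|\le C\e_\pm^\sigma M\norm{G}_{L^2}$; taking the supremum over unit $G\in L^2(\Omega;\R^m)$ completes the proof.
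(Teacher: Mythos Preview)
Your duality set-up in Steps~1--2 is correct and matches the paper, but Steps~3--4 contain two genuine gaps.

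First, the claimed cancellation on $S$ does not occur. After integrating by parts in $\Omega_\pm$, the boundary integrals on $S$ carry the flux correctors $\phi_{+,kij}(x/\e_+)$ from the $+$ side and $\phi_{-,kij}(x/\e_-)$ from the $-$ side. These are unrelated periodic functions (different cells $Y_\pm$, different scales $\e_\pm$), so no transmission condition on $u^0$ or $v^\e$ can make them match; the transmission conditions concern $u^0$, $u^\e$ and $v^\e$, not the correctors. Worse, the resulting boundary integrands involve traces of $\nabla u^0$ and $\nabla v^\e$ on $S$, for which you have no a priori control beyond the conormal components. The paper avoids all of this by building the first-order expansion
\[
w^\e = u^\e - u^0 - \sum_\pm \e_\pm\, \chi_{\pm,j}^{\e_\pm}\, S_{\e_\pm}\!\Big(\frac{\partial u^0}{\partial x_j}\Big)\,\eta_\pm,
\]
where the cut-offs $\eta_\pm\in C_0^\infty(\Omega_\pm)$ vanish in a layer $\Omega_\pm(2t_\pm)$ around $\partial\Omega_\pm=\partial\Omega\cup S$. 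All integrations by parts then take place in the interior of $\Omega_\pm$ and generate no boundary terms on $S$; the price is a volume term supported in the layers $\Omega_\pm(4t_\pm)$ (this is the content of Lemma~\ref{lem.dual}).

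Second, and more seriously, ``only the $H^1$ control of $u^0$ and $v^\e$'' cannot produce an algebraic rate for the strip terms: for a general $H^1$ function the $L^2$ norm of its gradient over a layer of width $t$ need not decay like any positive power of $t$. The paper obtains the exponent $\sigma$ from the Meyers estimate for the dual solution $v^\e$ of $\nabla\cdot(A^{\e*}\nabla v^\e)=G$ in $\Omega$, $v^\e=0$ on $\partial\Omega$: there is $p>2$, depending only on $\Lambda$ and the Lipschitz character of $\partial\Omega$, with $\norm{\nabla v^\e}_{L^p(\Omega)}\le C\norm{G}_{L^2(\Omega)}$, whence by H\"older
\[
\norm{\nabla v^\e}_{L^2(\Omega_\pm(4t_\pm))}\le C\,t_\pm^{\,1/2-1/p}\,\norm{G}_{L^2(\Omega)}.
\]
Balancing this against the $\e_\pm t_\pm^{-1}$ volume term from Lemma~\ref{lem.dual} by choosing $t_\pm=\e_\pm^{(3/2-1/p)^{-1}}$ yields $\sigma=1-(3/2-1/p)^{-1}>0$. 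Without this higher integrability for the dual solution the argument does not close.
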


Let $t_\pm>0$ and $\Omega_{\pm}(t_\pm) = \{ x\in \Omega_{\pm}: \dist(x,\partial \Omega_{\pm}) < t_\pm \}$. Let $\eta_{\pm}\in C_0^\infty(\Omega_{\pm})$ be a cut-off function such that $\eta_{\pm} = 0$ on $\Omega_{\pm}(2t_\pm), \eta_\pm = 1$ on $\Omega_{\pm} \setminus \Omega_{\pm}(4t_\pm)$ and $|\nabla \eta_\pm| \le Ct^{-1}_\pm$ in $\Omega_{\pm}$. Since we assume no smoothness on the coefficients $A_\pm$, we need the following smoothing operator:
\begin{equation}\label{def.St}
S_t f(x) = t^{-d}\int_{\R^d} f(x-y)  \varphi(y/t) dy,
\end{equation}
where $\varphi\in C_0^\infty(B_1)$ is a non-negative function with $\int_{\R^d} \varphi = 1$. Note that $S_t$ commutes with $\partial/\partial x_j$. Many useful properties of $S_t$ may be found in \cite{SZ17,Sh17} or \cite[Chapter 2.1]{Sh18}.
To avoid the distracting supscripts, we will only prove the theorem for scalar case (i.e., $m=1$). But the argument for general system is exactly the same. 

The key insight of the proof is to estimate the error of the first-order approximation constructed below
\begin{equation}\label{def.we}
w^\e = u^\e - u^0 - \e_+ \chi_{+,j}^{\e_+} S_{\e_+} \bigg( \frac{\partial u^0}{\partial x_j} \bigg) \eta_{+} - \e_- \chi_{-,j}^{\e_-} S_{\e_-} \bigg( \frac{\partial u^0}{\partial x_j} \bigg) \eta_{-}.
\end{equation}
Note that we need two first-order correction terms since $A_+ \neq A_-$. As usual, one may expect to carry out a familiar argument to estimate the $H^1$ norm of $w^\e$. However, because of the lack of the a priori regularity estimate of $u^0$ (such as the Meyers' estimate) under our weak assumptions (for example, the transmission data $g$ belongs merely in $H^{-1/2}(S;\R^m)$, instead of $L^2(S;\R^m)$), we are unable to show the convergence rate in $H^1$ directly. In the following, we will use a duality technique, developed in \cite{ZP05,Gr06,Sus13,SZ17}, to overcome this difficulty. The idea is to take advantage of the a priori regularity of the dual problem, which is well-known. This trick should be very powerful in dealing with problems with rough interfaces or rough transmission data.

The next lemma is in the spirit of \cite[Lemma 3.5]{SZ17}
\begin{lemma}\label{lem.dual}
	Let $t_\pm \in (2\e_{\pm},1)$ be arbitrary and  $w^\e$ be as in (\ref{def.we}). Then for any $\psi \in H^1_0(\Omega;\R^m)$,
	\begin{equation}\label{est.DweDpsi}
	\begin{aligned}
	\bigg| \int_{\Omega} A^\e \nabla w^\e \cdot \nabla \psi \bigg|
	& \le C\Big( \norm {F}_{L^2(\Omega)} + \norm {f}_{H^{1/2}(\partial\Omega)} + \norm {g}_{H^{-1/2}(\Omega)} \Big) \\
	& \qquad \times \Big( \sum_{\pm} \e_\pm t^{-1}_\pm \norm {\nabla \psi}_{L^2(\Omega)} + \sum_{\pm} \norm {\nabla \psi}_{L^2(\Omega_\pm(4t_\pm))} \Big).
	\end{aligned}
	\end{equation}
\end{lemma}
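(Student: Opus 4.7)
The plan is to adapt the classical first-order approximation method of periodic homogenization to the two-scale transmission setting, while employing the duality trick of \cite{SZ17} so that no regularity of $u^0$ beyond the energy estimate is required. First I would subtract the weak formulations of $u^\e$ and $u^0$: because (\ref{eq.TP.periodic}) and (\ref{eq.TP.homogenized}) carry identical data $F, f, g$, any $\psi\in H^1_0(\Omega;\R^m)$ used as a test function on both sides produces the same right-hand side, and the subtraction yields
\begin{equation*}
\int_\Omega A^\e\nabla w^\e\cdot\nabla\psi = \int_\Omega(\widehat{A}-A^\e)\nabla u^0\cdot\nabla\psi \;-\; \sum_{\pm}\int_{\Omega_\pm} A_\pm^{\e_\pm}\nabla\Bigl(\e_\pm\chi_{\pm,j}^{\e_\pm}S_{\e_\pm}(\partial_j u^0)\eta_\pm\Bigr)\cdot\nabla\psi.
\end{equation*}
Observe that $F, f$, and the rough $H^{-1/2}$ transmission datum $g$ cancel from this identity, which is the essence of the duality approach.

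Next I would expand the gradient of the corrector product on each $\Omega_\pm$, producing three pieces: the principal piece $(\nabla\chi_{\pm,j})^{\e_\pm}S_{\e_\pm}(\partial_j u^0)\eta_\pm$, the interior tail $\e_\pm\chi_{\pm,j}^{\e_\pm}\nabla S_{\e_\pm}(\partial_j u^0)\eta_\pm$, and the boundary-layer term $\e_\pm\chi_{\pm,j}^{\e_\pm}S_{\e_\pm}(\partial_j u^0)\nabla\eta_\pm$. On the set $\{\eta_\pm = 1\}$ the principal piece combines with $(\widehat{A}_\pm-A_\pm^{\e_\pm})\nabla u^0$ to give, via (\ref{def.bcorrector}),
\begin{equation*}
(\widehat{A}_\pm - A_\pm^{\e_\pm})(I-S_{\e_\pm})\nabla u^0 \;-\; b_\pm^{\e_\pm}\,S_{\e_\pm}(\nabla u^0).
\end{equation*}
The flux-corrector identity (\ref{eq.flux}) together with the antisymmetry $\phi_{\pm,kij}=-\phi_{\pm,ikj}$ then permits an integration by parts of the second summand against $\partial_i\psi$: the antisymmetry kills the symmetric mixed derivative of $\psi$, leaving an $O(\e_\pm)$ term of the form $\e_\pm\phi_{\pm,kij}^{\e_\pm}\partial_k S_{\e_\pm}(\partial_j u^0)\partial_i\psi$ with bounded periodic factor.

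Putting the pieces together, Cauchy--Schwarz combined with the uniform bounds on the periodic correctors and flux correctors, the smoothing-operator inequalities $\norm{S_\e h}_{L^2}\le\norm{h}_{L^2}$ and $\norm{\e\nabla S_\e h}_{L^2}\le C\norm{h}_{L^2}$, and the energy bound
\begin{equation*}
\norm{\nabla u^0}_{L^2(\Omega)} \le C\bigl(\norm{F}_{L^2(\Omega)} + \norm{f}_{H^{1/2}(\partial\Omega)} + \norm{g}_{H^{-1/2}(S)}\bigr)
\end{equation*}
coming from the well-posedness theorem of Section~2, together produce the claimed estimate. The interior contributions all carry a factor $\e_\pm$ and are absorbed into the $\e_\pm t_\pm^{-1}\norm{\nabla\psi}_{L^2(\Omega)}$ term (using $t_\pm < 1$), while the boundary-layer pieces, involving $\nabla\eta_\pm$ with $\norm{\nabla\eta_\pm}_\infty\le Ct_\pm^{-1}$ and supported in $\Omega_\pm(4t_\pm)$, produce the $\norm{\nabla\psi}_{L^2(\Omega_\pm(4t_\pm))}$ contribution. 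The main subtlety will be to confirm that no interface boundary term on $S$ arises when integrating by parts: because $\eta_\pm$ vanishes in a neighborhood of both $\partial\Omega$ and $S$, each correction term lies in $H^1_0(\Omega_\pm)$, so every integration by parts stays genuinely interior, and the whole computation decouples into two classical periodic-homogenization calculations at the price of a controlled boundary strip of width $t_\pm$.
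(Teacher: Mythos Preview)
Your decomposition and the handling of the flux correctors via the antisymmetry of $\phi_{kij}$ match the paper's argument, and your observation that $\eta_\pm\in C_0^\infty(\Omega_\pm)$ kills all interface terms is correct. However, there is a genuine gap in how you estimate the interior contributions.

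You assert that ``the interior contributions all carry a factor $\e_\pm$'' and invoke only the smoothing bounds $\|S_\e h\|_{L^2}\le\|h\|_{L^2}$ and $\|\e\nabla S_\e h\|_{L^2}\le C\|h\|_{L^2}$. But the latter bound, applied to the term $\e_\pm\phi_{\pm,kij}^{\e_\pm}\partial_k S_{\e_\pm}(\partial_j u^0)$, \emph{absorbs} the prefactor $\e_\pm$ and leaves only $C\|\nabla u^0\|_{L^2}$, an $O(1)$ quantity. The same problem afflicts the term $(\widehat{A}_\pm-A_\pm^{\e_\pm})(I-S_{\e_\pm})\nabla u^0$: there is no estimate of the form $\|(I-S_\e)h\|_{L^2}\le C\e\|h\|_{L^2}$, only $\|(I-S_\e)h\|_{L^2}\le C\e\|\nabla h\|_{L^2}$. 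In both places one needs control of $\nabla^2 u^0$, and an $O(1)$ interior bound would destroy the later optimization $t_\pm=\e_\pm^{\sigma_1}$ in the proof of Theorem~\ref{thm.TP.rate}.

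The missing ingredient is the interior $H^2$ estimate for $u^0$: since $\widehat{A}_\pm$ is constant on $\Omega_\pm$ and $\nabla\cdot(\widehat{A}_\pm\nabla u^0)=F$ there, the classical interior regularity gives
\[
\|\nabla^2 u^0\|_{L^2(\Omega_\pm\setminus\Omega_\pm(t_\pm))}\le Ct_\pm^{-1}\|\nabla u^0\|_{L^2(\Omega_\pm)}+C\|F\|_{L^2(\Omega_\pm)},
\]
and this is precisely what produces the factor $\e_\pm t_\pm^{-1}$ in the paper's proof. Your sentence ``no regularity of $u^0$ beyond the energy estimate is required'' is therefore a misreading of the duality device: what is avoided is regularity of $u^0$ \emph{up to $S$ and $\partial\Omega$} (hence no Meyers estimate, no smoothness of $g$ or $S$ is needed); interior $H^2$ regularity in each $\Omega_\pm$, away from the $t_\pm$-strip, is both available for free and indispensable.
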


\begin{proof}
	A direct computation combining (\ref{eq.TP.periodic}) and (\ref{eq.TP.homogenized}) leads to
	\begin{align*}
	\nabla\cdot (A^\e \nabla w^\e) & = \nabla\cdot \bigg\{ \sum_{\pm} \Big[(\widehat{A}_{\pm} - A^{\e_\pm}_\pm - A^{\e_\pm}_\pm \nabla \chi_{\pm}^{\e_\pm}) S_{\e_\pm}(\nabla u^0) \eta_{\pm} \Big] \bigg\} \\
	& \qquad - \nabla\cdot \Big[ \sum_{\pm} (\widehat{A}_\pm - A^{\e_\pm}_\pm ) \big(\nabla u^0 \mathbbm{1}_{\Omega_{\pm}} -S_{\e_\pm}(\nabla u^0)\eta_{\pm}\big)   \Big] \\
	& \qquad - \nabla \cdot \bigg[ \sum_{\pm} \e_\pm A^{\e_\pm} \chi_{\pm,j}^{\e_\pm} \nabla \bigg( S_{\e_\pm} \bigg( \frac{\partial u^0}{\partial x_j} \bigg) \eta_{\pm} \bigg)  \bigg] \\
	& =: \sum_{i = 1}^{3} \sum_{\pm} \nabla\cdot J_i^\pm.
	\end{align*}
	Now, to simplify $\nabla\cdot J_1^\pm$, we need to use the flux correctors. For $J_1^+$, by (\ref{def.bcorrector}) and (\ref{eq.flux}),
	\begin{equation*}
	\widehat{A}_+ - A^{\e_+}_+ - A^{\e_+}_+ \nabla \chi_{+}^{\e_+} = -\frac{\partial}{\partial x_k} \big( \e_+ \phi_{+,k}^{\e_+} \big).
	\end{equation*}
	Hence,
	\begin{align*}
	\nabla\cdot J_1^+ &= \frac{\partial}{\partial x_i} \bigg[ -\frac{\partial}{\partial x_k}\big( \e_+ \phi_{+,kij}^{\e_+} \big) S_{\e_+}\bigg( \frac{\partial u^0}{\partial x_j} \bigg) \eta_{+} \bigg] \\
	& = \frac{\partial}{\partial x_i} \bigg[  -\e_+ \phi_{+,kij}^{\e_+} \frac{\partial}{\partial x_k} \bigg( S_{\e_+}\bigg( \frac{\partial u^0}{\partial x_j} \bigg) \eta_{+} \bigg) \bigg] =: \nabla\cdot \tilde{J}_1^{+},
	\end{align*}
	where we have used the skew-symmetry of $\phi_{kij}$ with respect to $i$ and $k$ in the second equality. Similarly, we may write $\nabla\cdot J_1^- = \nabla\cdot \tilde{J}_1^-$, where $\tilde{J}_1^-$ could be self-explained according to $\tilde{J}_1^+$.
	Hence, it follows from the integration by parts that
	\begin{equation}\label{eq.DweDpsi}
	\int_{\Omega} A^\e \nabla w^\e \cdot \nabla \psi = -\sum_{\pm} \int_{\Omega} \tilde{J}_1^\pm \cdot \nabla \psi - \sum_{i = 2}^3 \sum_{\pm} \int_{\Omega} J_i^\pm \cdot \nabla \psi.
	\end{equation}
	
	Now, by the product rule and the triangle inequality, we have
	\begin{equation}\label{est.J1+}
	\begin{aligned}
	\norm {\tilde{J}_{1}^{+}}_{L^2(\Omega)} & \le \e_+  \norm {\phi^{\e_+} S_{\e_+}(\nabla^2 u^0)}_{L^2(\Omega_+\setminus \Omega_+(2t_+))} \\
	& \qquad + C\e_+ t^{-1}_+ \norm {\phi^{\e_+} S_{\e_+}(\nabla u^0)}_{L^2(\Omega_+(4t_+)\setminus \Omega_+(2t_+) )} .
	\end{aligned}
	\end{equation}
	Observe that $\nabla\cdot (\widehat{A}_+ \nabla u^0) = F$ in $\Omega_+$ and $\widehat{A}_+$ is constant in $\Omega_+$. The classical interior $H^2$ regularity for elliptic system and \cite[Lemma 2.1]{Sh17} implies that
	\begin{equation}\label{est.D2u0}
	\begin{aligned}
	\norm {\phi^{\e_+} S_{\e_+}(\nabla^2 u^0)}_{L^2(\Omega_+\setminus \Omega_+(2t_+))} & \le C \norm {\nabla^2 u^0}_{L^2(\Omega_+\setminus \Omega_+(t_+))} \\
	&\le Ct^{-1}_+ \norm {\nabla u^0}_{L^2(\Omega_+)} + C\norm {F}_{L^2(\Omega_+)}.
	\end{aligned}
	\end{equation}
	On the other hand, using \cite[Lemma 2.1]{Sh17} again
	\begin{equation}\label{est.Du0.layer}
	\norm {\phi^{\e_+} S_{\e_+}(\nabla u^0)}_{L^2(\Omega_+(4t_+)\setminus \Omega_+(2t_+) )} \le C\norm {\nabla u^0}_{L^2(\Omega_+(5t_+))}.
	\end{equation}
	Combining (\ref{est.J1+}) - (\ref{est.Du0.layer}), we have
	\begin{equation*}
	\norm {\tilde{J}_1^+}_{L^2(\Omega)} \le C\e_+ t^{-1}_+ \norm {\nabla u^0}_{L^2(\Omega_+)} + C \e\norm {F}_{L^2(\Omega_+)}.
	\end{equation*}
	Consequently,
	\begin{equation}\label{est.J'1}
	\bigg| \int_{\Omega} \tilde{J}_1^+ \cdot \nabla \psi \bigg| \le C\e_+ t^{-1}_+ \norm {\nabla \psi}_{L^2(\Omega)} \Big( \norm {F}_{L^2(\Omega)} + \norm {f}_{H^{1/2}(\partial\Omega)} + \norm {g}_{H^{-1/2}(\Omega)} \Big).
	\end{equation}
	This gives the desired estimate for the first integral on the right-hand side of (\ref{eq.DweDpsi}). Note that the estimates of $\int_{\Omega} \tilde{J}_1^-\cdot \nabla \psi$ and $\int_{\Omega} J_3^\pm \cdot \nabla \psi$ are the same as (\ref{est.J'1}).
	
	It suffices to estimate $\int_{\Omega} J_2^\pm\cdot \nabla \psi$. We only consider $\int_{\Omega} J_2^+ \cdot \nabla \psi$. By the triangle inequality,
	\begin{equation*}
	\begin{aligned}
	& \bigg|\int_{\Omega} J_2^+\cdot \nabla \psi  \bigg| \\
	& \le C\int_{\Omega_+} |(\nabla u^0 - S_{\e_+}(\nabla u^0)) \eta_{+} | |\nabla \psi| + C\int_{\Omega_+} |\nabla u^0 (1-\eta_{+})| |\nabla \psi| \\
	& \le C\norm {\nabla \psi}_{L^2(\Omega)} \Big( \e_+ t^{-1}_+ \norm {\nabla u^0}_{L^2(\Omega)} + \e_+ \norm {F}_{L^2(\Omega)} \Big) + C\norm {\nabla \psi}_{L^2(\Omega_{+}(4t_+)  )} \norm {\nabla u^0}_{L^2(\Omega)},
	\end{aligned}
	\end{equation*}
	where we have used \cite[Lemma 2.2]{Sh17} and (\ref{est.D2u0}) in estimating the first integral and used the fact that $1-\eta_{+}$ is supported in $\Omega_{+}(4t)$ for the second integral. Combining all the estimates for the right-hand side of (\ref{eq.DweDpsi}), we obtain (\ref{est.DweDpsi}).
\end{proof}

\begin{proof}[Proof of Theorem \ref{thm.TP.rate}]
	For any given $G\in L^2(\Omega;\R^m)$, let $v^\e$ be the weak solution of 
	\begin{equation}\label{eq.dual}
	\left\{
	\begin{aligned}
	\nabla\cdot (A^{\e*} \nabla v^\e) &= G \qquad &\text{ in }& \Omega, \\
	v^\e & = 0 \qquad &\txt{on } & \partial \Omega.
	\end{aligned} 
	\right.
	\end{equation}
	Since this is a regular elliptic system, the Meyers' estimate holds, namely, there is some $p>2$, depending only on $\Lambda$ and the Lipschitz character of $\partial\Omega$, such that
	\begin{equation}\label{est.ve.Meyer}
	\norm {\nabla v^\e}_{L^p(\Omega)} \le C \norm {G}_{L^2(\Omega)}.
	\end{equation}
	Let $u^\e$ and $u^0$ solve (\ref{eq.TP.periodic}) and (\ref{eq.TP.homogenized}), respectively. Let $w^\e$ be given by (\ref{def.we}). Then, by the integration by parts and Lemma \ref{lem.dual}, we have
	\begin{align}\label{est.we.G}
	\begin{aligned}
	\bigg| \int_{\Omega} w^\e \cdot G \bigg| & = \bigg| \int_{\Omega} A^\e \nabla w^\e \cdot  \nabla v^\e \bigg| \\
	& \le C\Big( \norm {F}_{L^2(\Omega)} + \norm {f}_{H^{1/2}(\partial\Omega)} + \norm {g}_{H^{-1/2}(\Omega)} \Big) \\
	& \qquad \times \Big( \sum_{\pm} \e_\pm t^{-1}_\pm \norm {\nabla v^\e }_{L^2(\Omega)} + \sum_{\pm} \norm {\nabla v^\e}_{L^2(\Omega_\pm(4t_\pm))} \Big).
	\end{aligned}
	\end{align}
	Now, (\ref{est.ve.Meyer}) implies
	\begin{equation*}
	\norm {\nabla v^\e}_{L^2(\Omega_\pm(4t))} \le C t^{1/2-1/p}_\pm \norm {G}_{L^2(\Omega)}.
	\end{equation*}
	Inserting this into (\ref{est.we.G}), we obtain
	\begin{equation*}
	\begin{aligned}
	\bigg| \int_{\Omega} w^\e \cdot G \bigg| & \le C \Big(\sum_{\pm} \e_\pm t^{-1}_\pm + t^{1/2-1/p}_\pm \Big) \norm {G}_{L^2(\Omega)} \Big( \norm {F}_{L^2(\Omega)} + \norm {f}_{H^{1/2}(\partial\Omega)} + \norm {g}_{H^{-1/2}(\Omega)} \Big) \\
	& \le C\sum_{\pm} \e_\pm^\sigma \norm{G}_{L^2(\Omega)} \Big( \norm {F}_{L^2(\Omega)} + \norm {f}_{H^{1/2}(\partial\Omega)} + \norm {g}_{H^{-1/2}(\Omega)} \Big),
	\end{aligned}
	\end{equation*}
	where, in the last inequality, we have chosen $t_\pm = \e^{\sigma_1}_\pm$ with $ \sigma_1 = (3/2-1/p)^{-1}<1$ and $\sigma = 1-\sigma_1>0$. This implies the desired estimate.
\end{proof}

\subsection{Case $0<\e_+ < 1 < \e_-$}
In this subsection, we show that, as $\e_+ \to 0$ and $\e_- \to \infty$, (\ref{eq.TP.periodic}) converges quantitatively to the following system
\begin{equation}\label{eq.TP.homogenized0}
\left\{
\begin{aligned}
\nabla\cdot (\overline{A} \nabla \bar{u}) &= F \qquad &\text{ in }& \Omega \setminus S, \\
\bar{u} & = f \qquad &\txt{on } & \partial \Omega, \\
\bar{u}_+ &= \bar{u}_- \qquad &\text{ on }&  S,\\
\Big( \frac{\partial \bar{u}}{\partial \nu_0} \Big)_+ - \Big( \frac{\partial \bar{u}}{\partial \nu^\circ} \Big)_- &= g \qquad &\text{ on }& S,
\end{aligned} 
\right.
\end{equation}
where
\begin{equation}\label{def.Abar}
\overline{A} = \left\{ 
\begin{aligned}
&\widehat{A}_+, \qquad &\txt{if }& x \in \overline{\Omega_+}, \\
&A_-^\circ :=A_-(0), \qquad &\txt{if }& x \in \Omega_-,\\
\end{aligned}
\right.
\end{equation}
and
\begin{equation*}
\Big( \frac{\partial \bar{u}}{\partial \nu_0} \Big)_+ = n\cdot \widehat{A}_+ \nabla \bar{u}, \quad \Big( \frac{\partial \bar{u}}{\partial \nu^\circ} \Big)_+ = n\cdot A_-^\circ \nabla \bar{u}.
\end{equation*}
Without loss of generality, we may assume $0\in \Omega$.

\begin{theorem}\label{thm.TP.rate0}
	Let $\e = \{\e_+,\e_-\}$ and $\e_{+} \in (0,1)$ and $\e_- \in (1,\infty)$. In addition, we assume $A_-$ is $C^\alpha$-H\"{o}lder continuous. Then there exists $\sigma>0$ depending only on $\Lambda,p,\alpha$ and the Lipschitz character of $\partial\Omega$ such that
	\begin{equation*}
	\norm {u^\e - \bar{u}}_{L^2(\Omega)} \le C(\e^\sigma_+ + \e^{-\alpha}_-)\Big( \norm {F}_{L^2(\Omega)} + \norm {f}_{H^{1/2}(\partial\Omega)}  + \norm {g}_{H^{-1/2}(S)} \Big),
	\end{equation*}
	where $u^\e$ and $\bar{u}$ solve (\ref{eq.TP.periodic}) and (\ref{eq.TP.homogenized0}), respectively.
\end{theorem}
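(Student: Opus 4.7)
The plan is to adapt the duality scheme used to prove Theorem \ref{thm.TP.rate}, with one key modification: since $\e_-\in(1,\infty)$, no genuine homogenization occurs in $\Omega_-$, so instead of inserting a corrector there we ``freeze'' $A_-^{\e_-}$ at $A_-^\circ = A_-(0)$. The $C^\alpha$-H\"older continuity of $A_-$ together with boundedness of $\Omega$ gives
\begin{equation*}
\norm{A_-^{\e_-} - A_-^\circ}_{L^\infty(\Omega)} \le [A_-]_{C^\alpha}\bigl(\txt{diam}(\Omega)/\e_-\bigr)^\alpha \le C\e_-^{-\alpha},
\end{equation*}
which will supply the $\e_-^{-\alpha}$ term in the final estimate. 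The approximation to $u^\e$ I would use is the one-sided
\begin{equation*}
w^\e := u^\e - \bar u - \e_+\, \chi_{+,j}^{\e_+}\,S_{\e_+}\!\left(\frac{\partial \bar u}{\partial x_j}\right)\eta_+,
\end{equation*}
where $\eta_+\in C_0^\infty(\Omega_+)$ satisfies $\eta_+ \equiv 1$ on $\Omega_+\setminus \Omega_+(4t_+)$ and $\eta_+\equiv 0$ on $\Omega_+(2t_+)$ for a parameter $t_+\in(2\e_+,1)$ to be chosen. In particular $\eta_+$ vanishes near the interface $S$, so $w^\e\in H^1(\Omega;\R^m)$ and $w^\e$ is continuous across $S$; no corrector is inserted on the $\Omega_-$ side.

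I would next establish an analogue of Lemma \ref{lem.dual}: for any $\psi\in H_0^1(\Omega;\R^m)$, subtracting the weak formulations of (\ref{eq.TP.periodic}) and (\ref{eq.TP.homogenized0}) gives
\begin{equation*}
\int_\Omega A^\e \nabla w^\e \cdot \nabla \psi = \int_\Omega (\overline A - A^\e)\nabla \bar u \cdot \nabla \psi - \int_\Omega A^\e \nabla\!\left[\e_+ \chi_{+,j}^{\e_+} S_{\e_+}\!\Bigl(\tfrac{\partial \bar u}{\partial x_j}\Bigr)\eta_+\right]\cdot \nabla \psi.
\end{equation*}
The $\Omega_+$-portion is handled exactly as in Lemma \ref{lem.dual}, using the flux correctors $\phi_{+,kij}^{\e_+}$, their skew-symmetry in $(i,k)$, and interior $H^2$-control of $\bar u$ in $\Omega_+$ (available because $\widehat A_+$ is constant and $\nabla\cdot(\widehat A_+\nabla \bar u)=F$), yielding a bound
\begin{equation*}
C\bigl(\e_+ t_+^{-1}\norm{\nabla \psi}_{L^2(\Omega)} + \norm{\nabla \psi}_{L^2(\Omega_+(4t_+))}\bigr)\mathcal D,
\end{equation*}
where $\mathcal D := \norm{F}_{L^2(\Omega)} + \norm{f}_{H^{1/2}(\partial\Omega)} + \norm{g}_{H^{-1/2}(S)}$. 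The $\Omega_-$-portion collapses to $\int_{\Omega_-}(A_-^\circ - A_-^{\e_-})\nabla \bar u\cdot \nabla \psi$, which by the freezing estimate and the energy inequality for $\bar u$ is at most $C\e_-^{-\alpha}\mathcal D\norm{\nabla \psi}_{L^2(\Omega)}$.

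To finish, I would run the duality argument verbatim: for $G\in L^2(\Omega;\R^m)$, let $v^\e$ solve the adjoint problem (\ref{eq.dual}), use Meyers' estimate (\ref{est.ve.Meyer}) to get $\norm{\nabla v^\e}_{L^2(\Omega_+(4t_+))}\le Ct_+^{1/2-1/p}\norm{G}_{L^2(\Omega)}$, and take $\psi = v^\e$ in the identity above. Together with the energy bound $\norm{\nabla v^\e}_{L^2(\Omega)} \le C\norm{G}_{L^2(\Omega)}$, this gives
\begin{equation*}
\Bigl|\int_\Omega w^\e \cdot G\Bigr| \le C\bigl(\e_+ t_+^{-1} + t_+^{1/2-1/p} + \e_-^{-\alpha}\bigr)\norm{G}_{L^2(\Omega)}\mathcal D.
\end{equation*}
Choosing $t_+ = \e_+^{\sigma_1}$ with $\sigma_1 = (3/2-1/p)^{-1}$ and $\sigma := 1-\sigma_1 >0$ yields $\norm{w^\e}_{L^2(\Omega)} \le C(\e_+^\sigma + \e_-^{-\alpha})\mathcal D$; absorbing the $O(\e_+)$ corrector term in $L^2$ completes the proof. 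The principal obstacle is really bookkeeping: verifying that Lemma \ref{lem.dual}'s argument survives intact on the $+$-side with $\bar u$ in place of $u^0$, and that no regularity beyond $\nabla \bar u\in L^2(\Omega)$ and interior $H^2$ in $\Omega_+$ is needed—both of which follow from $\widehat A_+$ being constant and the Dirichlet well-posedness proved in Section 2.
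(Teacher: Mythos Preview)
Your proposal is correct and matches the paper's approach essentially line for line: the paper defines the same one-sided $w^\e$, proves the analogue of Lemma~\ref{lem.dual} by splitting $\nabla\cdot(A^\e\nabla w^\e)$ into the three familiar $\Omega_+$-terms plus the extra $\nabla\cdot\bigl(\mathbbm{1}_{\Omega_-}(A_-^\circ - A_-^{\e_-})\nabla\bar u\bigr)$ handled via the $C^\alpha$ freezing estimate, and then remarks that the duality conclusion is ``almost the same as the proof of Theorem~\ref{thm.TP.rate} and is left to the reader.'' Your choice of $t_+ = \e_+^{\sigma_1}$ with $\sigma_1 = (3/2-1/p)^{-1}$ and the final absorption of the $L^2$-small corrector term are exactly what the paper intends.
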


The proof is similar to Theorem \ref{thm.TP.rate} by a duality argument. We begin with a lemma similar to (\ref{lem.dual}).

\begin{lemma}
	Let the assumptions in Theorem \ref{thm.TP.rate0} hold. Let $t_+ \in (2\e_+,1)$ be arbitrary and
	\begin{equation*}
	w^\e = u^\e - \bar{u} - \e_+ \chi_{+,j}^{\e_+} S_{\e_+} \bigg( \frac{\partial \bar{u}}{\partial x_j} \bigg) \eta_{+}.
	\end{equation*}
	Then for any $\psi \in H^1_0(\Omega;\R^m)$,
	\begin{equation}
	\begin{aligned}
	\bigg| \int_{\Omega} A^\e \nabla w^\e \cdot \nabla \psi \bigg|
	& \le C\Big( \norm {F}_{L^2(\Omega)} + \norm {f}_{H^{1/2}(\partial\Omega)} + \norm {g}_{H^{-1/2}(\Omega)} \Big) \\
	& \qquad \times \Big( (\e_+ t^{-1}_+ + \e_-^{-\alpha}) \norm {\nabla \psi}_{L^2(\Omega)} +  \norm {\nabla \psi}_{L^2(\Omega_+(4t_+))} \Big).
	\end{aligned}
	\end{equation}
\end{lemma}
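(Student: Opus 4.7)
The plan is to adapt the proof of Lemma~\ref{lem.dual} to the asymmetric two-scale regime $\e_+<1<\e_-$. The key conceptual point is that no homogenization occurs in $\Omega_-$ (where $\e_-$ is large), so the first-order corrector is inserted only on the ``$+$'' side; on the ``$-$'' side, the Hölder regularity of $A_-$ is used to directly control the discrepancy between $A_-(x/\e_-)$ and the frozen coefficient $A_-(0)$. I would begin with the weak formulations of (\ref{eq.TP.periodic}) and (\ref{eq.TP.homogenized0}): since $u^\e$ and $\bar u$ share the data $(F,f,g)$, (\ref{def.var.sol}) yields $\int_\Omega A^\e\nabla u^\e\cdot\nabla\psi = \int_\Omega \overline{A}\nabla\bar u\cdot\nabla\psi$ for any $\psi\in H^1_0(\Omega;\R^m)$. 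Expanding the definition of $w^\e$ and noting that $\eta_+$ is supported in $\Omega_+$, this rearranges to
\begin{equation*}
\int_\Omega A^\e\nabla w^\e\cdot\nabla\psi = \int_\Omega(\overline{A} - A^\e)\nabla\bar u\cdot\nabla\psi - \int_{\Omega_+}\e_+ A_+^{\e_+}\nabla\!\Big[\chi_{+,j}^{\e_+} S_{\e_+}\!\Big(\tfrac{\partial\bar u}{\partial x_j}\Big)\eta_+\Big]\cdot\nabla\psi.
\end{equation*}

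Next I would split the first integral into its $\Omega_+$ and $\Omega_-$ parts. The $\Omega_-$ contribution is the new ingredient: since $\overline{A}|_{\Omega_-} = A_-(0)$ and $A^\e|_{\Omega_-} = A_-(\cdot/\e_-)$, the $C^\alpha$ assumption gives $|A_-(0) - A_-(x/\e_-)| \le [A_-]_{C^\alpha}|x/\e_-|^\alpha \le C\e_-^{-\alpha}$ uniformly in $x\in\Omega$ (using boundedness of $\Omega$ and $0\in\Omega$). Cauchy--Schwarz together with the energy estimate $\|\nabla\bar u\|_{L^2(\Omega)} \le C(\|F\|_{L^2}+\|f\|_{H^{1/2}}+\|g\|_{H^{-1/2}})$ for the limit system (with its piecewise-constant-on-each-side coefficients) then produces the $\e_-^{-\alpha}\|\nabla\psi\|_{L^2(\Omega)}$ factor in the target bound.

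For the $\Omega_+$ piece together with the corrector correction, I expect the argument of Lemma~\ref{lem.dual} to transfer verbatim to the ``$+$'' side: I would decompose via the standard identity $\widehat{A}_+ - A_+^{\e_+} - A_+^{\e_+}\nabla\chi_+^{\e_+} = -\partial_k(\e_+\phi_{+,k}^{\e_+})$, split $\nabla\bar u$ into its smoothed version $S_{\e_+}(\nabla\bar u)\eta_+$ plus a remainder, exploit the skew-symmetry of the flux correctors to shift the derivative onto the smoothed piece, and finally invoke interior $H^2$-regularity of $\bar u$ in $\Omega_+$ (valid because $\widehat{A}_+$ is constant there). This reproduces the terms of size $\e_+ t_+^{-1}\|\nabla\psi\|_{L^2(\Omega)}$ and $\|\nabla\psi\|_{L^2(\Omega_+(4t_+))}$ multiplied by the global data bound, exactly as for $\tilde J_1^+$, $J_2^+$, $J_3^+$ in Lemma~\ref{lem.dual}. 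The principal obstacle is simply confirming that the Lemma~\ref{lem.dual} machinery restricts cleanly to one side of the interface --- which is automatic, since every estimate there is local to a single $\pm$ index --- after which collecting the $\Omega_+$ and $\Omega_-$ contributions produces the stated inequality.
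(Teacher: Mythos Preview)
Your proposal is correct and follows essentially the same route as the paper: the paper also computes $\nabla\cdot(A^\e\nabla w^\e)$ as a sum of the three ``$+$''-side terms $I_1,I_2,I_3$ identical to $J_1^+,J_2^+,J_3^+$ in Lemma~\ref{lem.dual}, plus the new term $I_4=\mathbbm{1}_{\Omega_-}(A_-^\circ-A_-^{\e_-})\nabla\bar u$, and then bounds $\int_\Omega I_4\cdot\nabla\psi$ via $|A_-(0)-A_-(x/\e_-)|\le C\e_-^{-\alpha}$ exactly as you do. Your only cosmetic difference is that you start from the weak formulations and subtract, whereas the paper writes out the divergence identity first and then integrates by parts; the resulting decomposition and estimates coincide.
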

\begin{proof}
	A direct computation combining (\ref{eq.TP.periodic}) and (\ref{eq.TP.homogenized0}) shows that
	\begin{align*}
	\nabla\cdot (A^\e \nabla w^\e) & = \nabla\cdot \bigg\{ \bigg[(\widehat{A}_{+} - A^{\e_+}_+ - A^{\e_+}_+ \nabla \chi_{+,j}^{\e_+}) S_{\e_+}\bigg( \frac{\partial \bar{u}}{\partial x_j} \bigg) \eta_{+} \bigg] \bigg\} \\
	& \qquad - \nabla\cdot \Big[ (\widehat{A}_+ - A^{\e_+}_+ ) \big(\nabla u^0 \mathbbm{1}_{\Omega_{+}} -S_{\e_+}(\nabla \bar{u})\eta_{+}\big)   \Big] \\
	& \qquad - \nabla \cdot \bigg[ \e_+ A^{\e_+} \chi_{+,j}^{\e_+} \nabla \bigg( S_{\e_+} \bigg( \frac{\partial \bar{u}}{\partial x_j} \bigg) \eta_{+} \bigg)  \bigg] \\
	& \qquad + \nabla \cdot (\mathbbm{1}_{\Omega_{-}} (A_-^\circ - A_-^{\e_-}) \nabla \bar{u}  )\\
	& = \sum_{i = 1}^{4} \nabla\cdot I_i.
	\end{align*}
	By the integration by parts, for any $\phi \in H_0^1(\Omega;\R^m)$,
	\begin{equation*}
	\int_{\Omega} A^\e \nabla w^\e \cdot \nabla \psi = - \sum_{i = 1}^{4} \int_{\Omega} I_i\cdot \nabla \psi.
	\end{equation*}
	Now, the estimates for the first three integrals are the same as in Lemma \ref{lem.dual}. The last integral follows easily by noting that
	\begin{equation*}
	|A_-^\circ - A_-^{\e_-}| = |A(0) - A(x/\e_-)| \le |x/\e_-|^\alpha [A_-]_{C^\alpha} \le \e_-^{-\alpha} [A_-]_{C^\alpha} \txt{diam}(\Omega)^\alpha,
	\end{equation*}
	since $A_-$ is $C^\alpha$-H\"{o}lder continuous and $0\in \Omega$. As a result,
	\begin{equation*}
	\bigg| \int_{\Omega} I_4\cdot \nabla \psi \bigg| \le C\e_-^{-\alpha} \norm {\nabla \bar{u}}_{L^2(\Omega_{-})} \norm {\nabla \psi}_{L^2(\Omega_{-})}.
	\end{equation*}
	This completes the proof.
\end{proof}

\begin{proof}[Proof of Theorem \ref{thm.TP.rate0}]
	The proof is almost the same as the proof of Theorem \ref{thm.TP.rate} and is left to the reader.
\end{proof}

\subsection{Convergence rates}
For our application, we will prove the Lipschitz estimate in the case $F = \nabla\cdot h$ in (\ref{eq.TP.periodic}), where
\begin{equation}\label{def.h.rate}
h(x) = \left\{ 
\begin{aligned}
&h_+(x), \qquad \txt{if } x \in {\Omega_+}, \\
&h_-(x), \qquad \txt{if } x \in {\Omega_-},\\
\end{aligned}
\right.
\end{equation}
and $h_\pm(x) \in C^{\alpha}(\overline{\Omega_\pm}; \R^{d\times m})$. Then, as a corollary of Theroem \ref{thm.TP.rate}, we have
\begin{theorem}\label{thm.mainrate}
	Let $u^\e$ and $u^0$ be the weak solutions of (\ref{eq.TP.periodic}) and (\ref{eq.TP.homogenized}) with $F = \nabla\cdot h$, where $h$ takes a form of (\ref{def.h.rate}). 
	
	\begin{itemize}
		\item[(i)] If $\e_{\pm} \in (0,1)$, then there exists $\sigma>0$, depending only on $\Lambda,\alpha$ and the Lipschitz character of $\partial\Omega$, such that
		\begin{equation*}
		\norm {u^\e - u^0}_{L^2(\Omega)} \le C ( \e^\sigma_{+} + \e^\sigma_{-} ) \Big( \sum_{\pm} [h]_{C^{\alpha}(\Omega_{\pm})} + \norm {f}_{H^{1/2}(\partial\Omega)}  + \norm {g}_{H^{-1/2}(S)} \Big).
		\end{equation*}
		
		\item[(ii)] If $\e_+ \in (0,1)$, $\e_- \in (1,\infty)$ and in addition $A_-$ is $C^\alpha$-H\"{o}lder continuous, then for the same $\sigma$ as in part (i),
		\begin{equation*}
		\norm {u^\e - \bar{u}}_{L^2(\Omega)} \le C ( \e^\sigma_{+} + \e^{-\alpha}_{-} ) \Big( \sum_{\pm} [h]_{C^{\alpha}(\Omega_{\pm})} + \norm {f}_{H^{1/2}(\partial\Omega)}  + \norm {g}_{H^{-1/2}(S)} \Big).
		\end{equation*}
	\end{itemize}
\end{theorem}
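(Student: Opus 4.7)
The plan is to adapt the duality arguments from Theorems \ref{thm.TP.rate} and \ref{thm.TP.rate0} to handle the source $F = \nabla\cdot h$ with $h$ piecewise $C^\alpha$, instead of $F \in L^2(\Omega)$. The key structural observation is that both $u^\e$ and $u^0$ (respectively $\bar u$ in part (ii)) satisfy the same source $\nabla\cdot h$ and the same transmission data $g$, so in the identity $\nabla\cdot(A^\e\nabla u^\e) - \nabla\cdot(\widehat A\nabla u^0) = 0$ in each $\Omega_\pm$ the $h$ terms cancel. Consequently, the first-order approximation
\begin{equation*}
w^\e = u^\e - u^0 - \sum_\pm \e_\pm \chi_{\pm,j}^{\e_\pm}\, S_{\e_\pm}\!\left(\frac{\partial u^0}{\partial x_j}\right)\eta_\pm
\end{equation*}
and its flux-corrector decomposition $\nabla\cdot(A^\e\nabla w^\e) = \sum_i \sum_\pm \nabla\cdot J_i^\pm$ from Lemma \ref{lem.dual} apply without modification, and the Meyers estimate for the adjoint problem (which requires only an $L^2$ right-hand side) is unchanged.

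The only place in that proof where the source enters is the interior $H^2$ bound (\ref{est.D2u0}) for $u^0$. Since adding constants to $h_+$ and $h_-$ separately leaves both the PDE in $\Omega_\pm$ and the transmission condition unchanged (as $\nabla\cdot c=0$ and the resulting boundary contributions cancel between the bulk and transmission integrals in the weak formulation), we may normalize so that $\|h_\pm\|_{L^\infty(\Omega_\pm)} \le C[h_\pm]_{C^\alpha(\Omega_\pm)}$. On a compact interior subregion of $\Omega_\pm$, the homogenized equation $\nabla\cdot(\widehat A_\pm\nabla u^0) = \nabla\cdot h_\pm$ has constant coefficient, and I would split $u^0 = u^0_p + u^0_h$, where $u^0_p$ is a Newtonian-type particular solution obtained by convolving a compactly supported extension of $h_\pm$ against the gradient of the fundamental solution of $\nabla\cdot(\widehat A_\pm\nabla\cdot)$, and $u^0_h$ is the $\widehat A_\pm$-harmonic remainder. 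By Calder\'on--Zygmund, $\|\nabla u^0_p\|_{L^p(\Omega_\pm)} \lesssim [h_\pm]_{C^\alpha(\Omega_\pm)}$ for every $p<\infty$, while $u^0_h$ obeys standard interior $H^2$ estimates, yielding a substitute for (\ref{est.D2u0}) with $\|F\|_{L^2(\Omega_\pm)}$ replaced by $C[h_\pm]_{C^\alpha(\Omega_\pm)}$. Insertion into the duality argument gives part (i); part (ii) follows by the same modifications applied to Theorem \ref{thm.TP.rate0}, with the $\e_-^{-\alpha}$ factor arising from the $C^\alpha$ regularity of $A_-$ as in its $I_4$ term.

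The principal technical obstacle is that $\nabla^2 u^0_p$ does not lie in $L^2$ when $h_\pm$ is merely $C^\alpha$, since it involves a kernel one order more singular than Calder\'on--Zygmund. However, the flux-corrector term $\tilde J_1^\pm$ sees $\nabla^2 u^0$ only through the smoothed, oscillating combination $\phi^{\e_\pm}_\pm\, S_{\e_\pm}(\nabla^2 u^0)$; transferring a derivative to the flux corrector via $\e_\pm\partial_k\phi^{\e_\pm}_\pm = (\partial_k\phi_\pm)^{\e_\pm}$ and exploiting the zero mean of the periodic $\phi_\pm$ should allow the $u^0_p$ contribution to be controlled by $\e_\pm^\sigma[h_\pm]_{C^\alpha}$ without requiring a direct $L^2$ bound on $\nabla^2 u^0_p$, delivering the rate stated in the theorem.
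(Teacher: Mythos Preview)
Your approach diverges from the paper's and has a genuine gap at precisely the point you flag. The paper does \emph{not} re-open the duality argument; instead it uses an interpolation. After normalizing $h_+(P)=h_-(P)=0$ at some $P\in S$, it mollifies $h_{\pm,t}=S_t h_\pm$, splits $u^\e=u^\e_1+u^\e_2$ with $u^\e_1$ carrying the smooth source $\nabla\cdot h_t$ and the full boundary and transmission data, and $u^\e_2$ carrying $\nabla\cdot(h-h_t)$ with zero data. Since $\|\nabla h_t\|_{L^2}\le Ct^{-1}\sum_\pm[h]_{C^\alpha(\Omega_\pm)}$, Theorem~\ref{thm.TP.rate} applies to $u^\e_1$ verbatim with $\|F\|_{L^2}$ replaced by $Ct^{-1}\sum_\pm[h]_{C^\alpha}$, while the energy estimate and $\|h-h_t\|_{L^\infty}\le Ct^\alpha\sum_\pm[h]_{C^\alpha}$ give $\|u^\e_2\|_{H^1}\le Ct^\alpha\sum_\pm[h]_{C^\alpha}$. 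The same splitting is done for $u^0$, and one optimizes in $t$ by taking $t=(\e_+^\sigma+\e_-^\sigma)^{1/(1+\alpha)}$. Part (ii) is identical with Theorem~\ref{thm.TP.rate0} in place of Theorem~\ref{thm.TP.rate}.

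Your direct route founders exactly where you suspect. The identity $\e_\pm\partial_k\phi_\pm^{\e_\pm}=(\partial_k\phi_\pm)^{\e_\pm}$ moves the derivative the wrong way: it produces an $O(1)$ oscillating factor, not a gain of $\e_\pm$. In the term $\e_\pm\phi_\pm^{\e_\pm}S_{\e_\pm}(\nabla^2 u^0)$ the smoothing operator gives only $\|S_{\e_\pm}(\nabla^2 u^0)\|_{L^2}\le C\e_\pm^{-1}\|\nabla u^0\|_{L^2}$, so the prefactor $\e_\pm$ is exactly cancelled and no smallness remains. The zero mean of $\phi_\pm$ does not help here, because what is needed is an $L^2$ bound on a product, not an oscillatory integral against a slowly varying test function; Shen's Lemma~2.1 already exploits periodicity and gives no extra factor of $\e$. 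Your Newtonian splitting $u^0=u^0_p+u^0_h$ does not resolve this either, since the $\nabla^2 u^0_p$ contribution still has to pass through the same product, and Calder\'on--Zygmund on $\nabla u^0_p$ alone does not suffice. The mollification-in-$h$ device is precisely what supplies the missing interpolation parameter; without it your argument cannot close.
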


\begin{proof}
	This may be proved by an interpolation argument. We only prove part (i) as the proof for part (ii) is similar. By subtracting constants, we may assume $h_+(P) = h_-(P) = 0$ for some $P\in S$. Also, we may extend $h_{\pm}$ to the entire $\R^d$ so that the $C^{\alpha}$ character is preserved. Let $t>0$ to be determined and define $h_{\pm,t} = S_t h_\pm$, where $S_t$ is given by (\ref{def.St}), and set
	\begin{equation*}
	h_t(x) = \left\{ 
	\begin{aligned}
	&h_{+,t}(x), \qquad \txt{if } x \in {\Omega_+}, \\
	&h_{-,t}(x), \qquad \txt{if } x \in {\Omega_-}.\\
	\end{aligned}
	\right.
	\end{equation*}
	
	Let $u^\e = u^\e_1 + u^\e_2$, where $u_1^\e$ solves
	\begin{equation}\label{eq.TP.u1}
	\left\{
	\begin{aligned}
	\nabla\cdot (A^\e \nabla u^\e_1) &= \nabla\cdot h_t \qquad &\text{ in }& \Omega \setminus S, \\
	u^\e_1 & = f \qquad &\txt{on } & \partial \Omega, \\
	u^\e_{1,+} &= u^\e_{1,-} \qquad &\text{ on }&  S,\\
	\Big( \frac{\partial u^\e_1}{\partial \nu_\e} \Big)_+ - \Big( \frac{\partial u^\e_1}{\partial \nu_\e} \Big)_- &= g \qquad &\text{ on }& S,
	\end{aligned} 
	\right.
	\end{equation}
	and $u^\e_2$ solves
	\begin{equation}\label{eq.TP.u2}
	\left\{
	\begin{aligned}
	\nabla\cdot (A^\e \nabla u^\e_2) &= \nabla\cdot (h-h_t) \qquad &\text{ in }& \Omega \setminus S, \\
	u^\e_2 & = 0 \qquad &\txt{on } & \partial \Omega, \\
	u^\e_{2,+} &= u^\e_{2,-} \qquad &\text{ on }&  S,\\
	\Big( \frac{\partial u^\e_2}{\partial \nu_\e} \Big)_+ - \Big( \frac{\partial u^\e_2}{\partial \nu_\e} \Big)_- &= 0 \qquad &\text{ on }& S.
	\end{aligned} 
	\right.
	\end{equation}
	Because 
	\begin{equation*}
	\norm {\nabla h_t}_{L^2(\Omega)} \le Ct^{-1} \norm {h}_{L^\infty(\Omega)} \le Ct^{-1} \sum_{\pm} [h]_{C^\alpha(\Omega_{\pm})},
	\end{equation*}
	we obtain from Theorem \ref{thm.TP.rate} that
	\begin{equation*}
	\norm {u^\e_1 - u^0_1}_{L^2(\Omega)} \le C( \e^\sigma_{+} + \e^\sigma_{-} ) \Big( t^{-1} \sum_{\pm} [h]_{C^\alpha(\Omega_{\pm})} + \norm {f}_{H^{1/2}(\partial\Omega)}  + \norm {g}_{H^{-1/2}(S)} \Big),
	\end{equation*}
	where $u^0_1$ is the weak solution of
	\begin{equation}\label{eq.TP.u10}
	\left\{
	\begin{aligned}
	\nabla\cdot (\widehat{A} \nabla u^0_1) &= \nabla\cdot h_t \qquad &\text{ in }& \Omega \setminus S, \\
	u^0_1 & = f \qquad &\txt{on } & \partial \Omega, \\
	u^0_{1,+} &= u^0_{1,-} \qquad &\text{ on }&  S,\\
	\Big( \frac{\partial u^0_1}{\partial \nu_\e} \Big)_+ - \Big( \frac{\partial u^0_1}{\partial \nu_\e} \Big)_- &= g \qquad &\text{ on }& S.
	\end{aligned} 
	\right.
	\end{equation}
	
	Next, we need to estimate $u^\e_2$. To this end, integrating the system (\ref{eq.TP.u2}) against $u^\e_2$ (or see (\ref{def.var.sol})), we have
	\begin{equation*}
	\int_{\Omega} A^\e \nabla u^\e_2 \cdot \nabla u^\e_2 = \int_{\Omega} (h-h_t)\cdot \nabla u^\e_2 - \sum_{\pm} \int_{S} \pm n\cdot (h-h_t)_{\pm} u^\e_2 d\sigma,
	\end{equation*}
	which leads to
	\begin{equation*}
	\norm {\nabla u^\e_2}_{L^2(\Omega)} \le C\norm {h - h_t}_{L^\infty(\Omega)} \le C\sum_{\pm} \norm {h_\pm - h_{\pm,t}}_{L^\infty(\Omega_\pm)}.
	\end{equation*}
	Now, by the $C^\alpha$-H\"{o}lder continuity of $h_\pm$, for every $x\in \Omega_{\pm}$,
	\begin{equation}\label{est.h.int}
	|h_{\pm,t}(x) - h_\pm(x)| = \bigg| t^{-d}\int_{\R^d} (h_\pm(x-y) - h_\pm(x))  \varphi(y/t) dy \bigg| \le Ct^\alpha [h]_{C^\alpha(\Omega_{\pm})}.
	\end{equation}
	Consequently
	\begin{equation}\label{est.ue2}
	\norm {u_2^\e}_{H^1(\Omega)} \le Ct^{\alpha} \sum_{\pm} [h]_{C^{\alpha}(\Omega_{\pm})}.
	\end{equation}
	
	Now, since $u^0$ is the weak solution of (\ref{eq.TP.homogenized}) with $F = \nabla\cdot h$, we may write $u^0 = u^0_1 + u^0_2$ and note that $u_2^0$ solves
	\begin{equation}\label{eq.TP.u20}
	\left\{
	\begin{aligned}
	\nabla\cdot (\widehat{A} \nabla u^0_2) &= \nabla\cdot (h-h_t) \qquad &\text{ in }& \Omega \setminus S, \\
	u^0_2 & = 0 \qquad &\txt{on } & \partial \Omega, \\
	u^0_{2,+} &= u^0_{2,-} \qquad &\text{ on }&  S,\\
	\Big( \frac{\partial u^0_2}{\partial \nu_\e} \Big)_+ - \Big( \frac{\partial u^0_2}{\partial \nu_\e} \Big)_- &= 0 \qquad &\text{ on }& S.
	\end{aligned} 
	\right.
	\end{equation}
	Then, similar to (\ref{est.ue2}), one has
	\begin{equation*}
	\norm {u_2^0}_{H^1(\Omega)} \le Ct^{\alpha} \sum_{\pm} [h]_{C^{\alpha}(\Omega_{\pm})}.
	\end{equation*}
	Hence,
	\begin{equation*}
	\begin{aligned}
	\norm {u^\e - u^0}_{L^2(\Omega)} & \le \norm {u^\e_1 - u^0_1}_{L^(\Omega)} + \norm {u_2^\e}_{L^2(\Omega)} + \norm{u_2^0}_{L^2(\Omega)} \\
	& \le C( \e^\sigma_{+} + \e^\sigma_{-} ) \big( \norm {f}_{H^{1/2}(\partial\Omega)}  + \norm {g}_{H^{-1/2}(S)} \big) \\
	& \qquad + C\big(( \e^\sigma_{+} + \e^\sigma_{-} ) t^{-1} + t^{\alpha} \big) \sum_{\pm} [h]_{C^{\alpha}(\Omega_{\pm})}.
	\end{aligned}
	\end{equation*}
	Finally, since $t>0$ is arbitrary, the desired estimate follows by choosing $t = ( \e^\sigma_{+} + \e^\sigma_{-} )^{1/(1+\alpha)}$.
\end{proof}

\begin{remark}
	The advantage of the method in this subsection lies in generality and the weak assumptions on the interface and transmission value. The pay off is that the convergence rates are far from sharp.
\end{remark}

\section{Uniform Lipschitz estimate}
The uniform Lipschitz estimate in this section is proved by a recently developed quantitative method in homogenization \cite{AS16,Sh17}. We follow the elegant framework formulated by Z. Shen in \cite{Sh17}. Throughout this section, we assume $0<\e_+ \le \e_- < 1$. 

\subsection{Interface stability}
In this subsection, we prove an interface stability result which allows us to ``flatten'' the interface. This step is crucial because the piecewise linear solutions in $\mathscr{L}$ all have the flat interface $\{x_d = 0 \}$.
By translation and rotation, we assume $0\in S$ and $\{x_d = 0\}$ is the tangent plane of $S$ at $0$. Suppose $S$ is given by the graph $x_d = \psi(x')$.
Instead of working on balls, we will work on cylinders for some technical reason. Let $Q_t: = \{ x = (x',x_d): |x'|<t, |x_d| < t  \}$ and $Q_{t,\pm} = \{x\in Q_t: \pm x_d > 0\}$. Let $D_{t,\pm}: = \{ x\in Q_t: \pm x_d > \psi(x') \}$, $S_t = Q_t \cap S$ and $T_t = Q_t \cap \{x_d = 0\}$.

Let $u^0\in H^1(Q_t;\R^m)$ be a weak solution of the following problem
\begin{equation}\label{eq.TP.curved}
\left\{
\begin{aligned}
\nabla\cdot (\widehat{A} \nabla u^0) &= \nabla\cdot h \qquad &\text{ in }& Q_t \setminus S_t, \\
u^0_+ &= u^0_- \qquad &\text{ on }&  S_t,\\
\Big( \frac{\partial u^0}{\partial \nu_0} \Big)_+ - \Big( \frac{\partial u^0}{\partial \nu_0} \Big)_- &= g \qquad &\text{ on }& S_t.
\end{aligned} 
\right.
\end{equation}
The idea is that the above problem may be approximated by the following problem with flat interface
\begin{equation}\label{eq.TP.flat}
\left\{
\begin{aligned}
\nabla\cdot (\widehat{A} \nabla v_t) &= 0 \qquad &\text{ in }& Q_t \setminus T_t, \\
v_t &= u^0 \qquad &\txt{ on } & \partial Q_t, \\
(v_t)_+ &= (v_t)_- \qquad &\text{ on }&  T_t,\\
\Big( \frac{\partial v_t}{\partial \nu_0} \Big)_+ - \Big( \frac{\partial v_t}{\partial \nu_0} \Big)_- &= g_0:=g(0) \qquad &\text{ on }& T_t,
\end{aligned} 
\right.
\end{equation}
Precisely, we prove
\begin{lemma}\label{lem.interface stability}
	Let $u^0$ and $v_t$ solve (\ref{eq.TP.curved}) and (\ref{eq.TP.flat}), respectively. Then
	\begin{equation*}
	\begin{aligned}
	& \frac{1}{t}\bigg( \fint_{Q_t} |u^0 - v_t| \bigg)^{1/2} + \bigg( \fint_{Q_t} |\nabla (u^0 - v_t)| \bigg)^{1/2} \\
	&\qquad \le Ct^{\alpha} \Big( [g]_{C^\alpha(S_t)} + \sum_{\pm} [h]_{C^\alpha(D_{t,\pm})} \Big)  + Ct^{\alpha/2} |g(0)|.
	\end{aligned}
	\end{equation*}
\end{lemma}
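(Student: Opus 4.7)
The plan is to derive an energy inequality for $w := u^0 - v_t$, which lies in $H_0^1(Q_t;\R^m)$ because $v_t=u^0$ on $\partial Q_t$. After the harmless reduction $h_+(0)=h_-(0)=0$ (subtracting constants from $h_\pm$ affects neither $\nabla\cdot h$ nor the transmission data $g$), the H\"older hypotheses yield $\|h\|_{L^\infty(Q_t)}\le Ct^\alpha\sum_\pm [h_\pm]_{C^\alpha(D_{t,\pm})}$ and $\|g-g_0\|_{L^\infty(S_t)}\le t^\alpha[g]_{C^\alpha(S_t)}$. Write $\widehat{A}^S=\widehat{A}_+\mathbbm{1}_{D_{t,+}}+\widehat{A}_-\mathbbm{1}_{D_{t,-}}$ and $\widehat{A}^T=\widehat{A}_+\mathbbm{1}_{Q_{t,+}}+\widehat{A}_-\mathbbm{1}_{Q_{t,-}}$; these agree except on the thin shell $N_t:=D_{t,+}\triangle Q_{t,+}$, whose measure is $|N_t|\le Ct^{d+\alpha}$ since $\psi$ is $C^{1,\alpha}$ with $\psi(0)=0$ and $\nabla\psi(0)=0$ forces $|\psi(x')|\le Ct^{1+\alpha}$ on $|x'|<t$.

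Subtracting the weak formulations (\ref{def.var.sol}) for $u^0$ and $v_t$ and testing against $\phi=w$ leads to
\begin{equation*}
\int_{Q_t}\widehat{A}^T\nabla w\cdot\nabla w=\int_{N_t}(\widehat{A}^T-\widehat{A}^S)\nabla u^0\cdot\nabla w+\int_{S_t}\bigl(g-g_0-n\cdot(h_+-h_-)\bigr)w\,d\sigma+g_0\Bigl(\int_{S_t}w\,d\sigma-\int_{T_t}w\,d\sigma\Bigr)+\int_{Q_t}h\cdot\nabla w.
\end{equation*}
Ellipticity bounds the left-hand side below by $\Lambda^{-1}\|\nabla w\|_{L^2(Q_t)}^2$. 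The two H\"older integrals on $S_t$ and the volume integral $\int_{Q_t}h\cdot\nabla w$ are estimated via Cauchy--Schwarz together with the trace and Poincar\'e inequalities for $w\in H_0^1(Q_t)$, each yielding a contribution of order $Ct^{d/2+\alpha}\bigl([g]_{C^\alpha(S_t)}+\sum_\pm[h_\pm]_{C^\alpha(D_{t,\pm})}\bigr)\|\nabla w\|_{L^2(Q_t)}$. The coefficient-mismatch integral is bounded by $C|N_t|^{1/2}\|\nabla u^0\|_{L^\infty}\|\nabla w\|_{L^2(Q_t)}$, where $\|\nabla u^0\|_{L^\infty}$ is treated as an admissible quantity via the Schauder estimate of Theorem~\ref{thm.Cka}.

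The decisive step is the surface-mismatch term $g_0\bigl(\int_{S_t}w\,d\sigma-\int_{T_t}w\,d\sigma\bigr)$. Parametrizing $S_t$ as the graph $x_d=\psi(x')$ and using
$w(x',0)-w(x',\psi(x'))=-\int_0^{\psi(x')}\partial_dw(x',s)\,ds$
converts this difference (up to a metric remainder of order $t^{2\alpha}$ from $\sqrt{1+|\nabla\psi|^2}-1$, which is lower order) into a volume integral over the strip $R_t$ lying between $S_t$ and $T_t$. Since $|R_t|\le Ct^{d+\alpha}$, Cauchy--Schwarz gives
\begin{equation*}
\Bigl|\int_{S_t}w\,d\sigma-\int_{T_t}w\,d\sigma\Bigr|\le Ct^{(d+\alpha)/2}\|\nabla w\|_{L^2(Q_t)},
\end{equation*}
so that the surface-mismatch term contributes $C|g_0|t^{(d+\alpha)/2}\|\nabla w\|_{L^2(Q_t)}$.

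Applying Young's inequality to absorb the $\|\nabla w\|_{L^2(Q_t)}$ factors and dividing by $t^d$ yields
\begin{equation*}
t^{-d}\|\nabla w\|_{L^2(Q_t)}^2\le Ct^{2\alpha}\Bigl([g]_{C^\alpha(S_t)}+\sum_\pm[h_\pm]_{C^\alpha(D_{t,\pm})}\Bigr)^2+Ct^{\alpha}|g_0|^2.
\end{equation*}
Taking square roots and invoking the Poincar\'e inequality $\|w\|_{L^2(Q_t)}\le Ct\|\nabla w\|_{L^2(Q_t)}$ (valid since $w\in H_0^1(Q_t)$) delivers the claim. The main obstacle is the surface-mismatch estimate: rewriting the $S_t$--$T_t$ difference as a volume integral over the $O(t^{1+\alpha})$-thick strip $R_t$ and applying Cauchy--Schwarz gains only $|R_t|^{1/2}\sim t^{(d+\alpha)/2}$, which is precisely what forces the weaker exponent $\alpha/2$ on $|g_0|$ in the final bound (in contrast to the $\alpha$ exponent enjoyed by the H\"older terms).
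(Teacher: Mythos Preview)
Your argument follows essentially the same path as the paper's: form $w=u^0-v_t\in H_0^1(Q_t)$, derive an energy identity, estimate the $S_t$ integral and the volume $h$ integral by H\"older regularity plus trace/Poincar\'e, and handle the $S_t$--$T_t$ surface mismatch by writing $w(x',\psi(x'))-w(x',0)=\int_0^{\psi(x')}\partial_d w\,ds$ and using that the strip has measure $O(t^{d+\alpha})$. These parts match the paper.

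The genuine gap is in your treatment of the coefficient-mismatch term $\int_{N_t}(\widehat{A}^T-\widehat{A}^S)\nabla u^0\cdot\nabla w$. Bounding it via $\|\nabla u^0\|_{L^\infty}$ and Theorem~\ref{thm.Cka} does not close the estimate: the Schauder estimate is an interior estimate, so it cannot control $\nabla u^0$ on the part of $N_t$ that reaches $\partial Q_t$; and even on the interior it introduces $\|u^0\|_{L^2(Q_t)}$, a quantity not dominated by $[g]_{C^\alpha}+\sum_\pm[h_\pm]_{C^\alpha}+|g(0)|$. In fact, under your reading of (\ref{eq.TP.flat}) with the flat-interface coefficient $\widehat{A}^T$, the lemma as stated is simply false: take $h\equiv0$ and $g\equiv0$, so the right-hand side vanishes, yet $u^0$ and $v_t$ then solve genuinely different elliptic problems (with coefficients $\widehat{A}^S$ and $\widehat{A}^T$ respectively) and need not coincide. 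The paper avoids this issue by using the \emph{same} piecewise constant $\widehat{A}$ (jumping across the curved interface $S$) in both (\ref{eq.TP.curved}) and (\ref{eq.TP.flat}); with that reading there is no coefficient-mismatch term, and the energy identity (\ref{est.stable.w}) is exact.
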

\begin{proof}
	Without loss of generality, we assume $h_\pm(0) = 0$. Let $w = u_0 - v_t$. Then $w$ is the weak solution of
	\begin{equation}\label{eq.TP.u0-vt}
	\left\{
	\begin{aligned}
	\nabla\cdot (\widehat{A} \nabla w) &= F \qquad &\text{ in }& Q_t, \\
	w &= 0 \qquad &\txt{ on } & \partial Q_t. \\
	\end{aligned} 
	\right.
	\end{equation}
	where
	\begin{equation*}
	\begin{aligned}
	F & = \big( -g + n\cdot h_+ - n \cdot h_-\big) d\sigma|_{S_t} + g_0 d\sigma|_{T_t} + \nabla\cdot h.
	\end{aligned}
	\end{equation*}
	Testing the system against $w$, we have
	\begin{equation}\label{est.stable.w}
	\begin{aligned}
	\int_{Q_t} \widehat{A} \nabla w\cdot \nabla w & = -\int_{S_t} \big(g_0-g + n\cdot h_+ - n \cdot h_-\big) \cdot w d\sigma  \\
	&\qquad + \bigg[ \int_{S_t} g_0 \cdot w d\sigma -\int_{T_t} g_0 \cdot w d\sigma \bigg]  - \int_{Q_t} h\cdot \nabla w.
	\end{aligned}
	\end{equation}
	
	To estimate the first integral on the right-hand side, note that $h_{\pm}(0) = 0$ and $g_0 = g(0)$. Then, by the $C^{\alpha}$-H\"{o}lder continuity of $g$ and $h$, and the trace theorem, one has
	\begin{equation*}
	\begin{aligned}
	\bigg| \int_{S_t} \big(g_0-g + n\cdot h_+ - n \cdot h_-\big) \cdot w d\sigma \bigg| & \le Ct^\alpha t^{(d-1)/2} \Big( [g]_{C^\alpha(S_t)} + \sum_{\pm} [h]_{C^\alpha(S_t)} \Big) \norm {w}_{L^2(S_t)} \\
	& \le Ct^{d/2+\alpha} \Big( [g]_{C^\alpha(S_t)} + \sum_{\pm} [h]_{C^\alpha(S_t)} \Big) \norm {\nabla w}_{L^2(Q_t)}.
	\end{aligned}
	\end{equation*}
	Also, the last integral of (\ref{est.stable.w}) is easily bounded by the same quantity as above. The slightly tricky one is the estimate of the terms in the brackets. This can be estimated by taking advantage of the $C^{1,\alpha}$ smoothness of the interface $S_t$ and a change of variables. In fact, using
	\begin{equation*}
	d\sigma|_{S_t} = \sqrt{1+|\nabla \psi(x')|^2} d\sigma|_{T_t},
	\end{equation*}
	we may write
	\begin{equation*}
	\int_{S_t} g_0 \cdot w d\sigma = \int_{T_t} g_0 \cdot w(x',\psi(x')) \sqrt{1+|\nabla \psi(x')|^2} d\sigma.
	\end{equation*}
	It follows that
	\begin{equation*}
	\begin{aligned}
	\bigg| \int_{S_t} g_0 \cdot w d\sigma -\int_{T_t} g_0 \cdot w d\sigma \bigg| & \le \int_{T_t} |g_0| |w(x',\psi(x')) - w(x',0)| \sqrt{1+|\nabla \psi(x')|^2} d\sigma \\
	&  \qquad + \int_{T_t} |g_0| |w(x',0)| \Big( \sqrt{1+|\nabla \psi(x')|^2} - 1\Big) d\sigma \\
	& =: I_1 + I_2.
	\end{aligned}
	\end{equation*}
	By the fundamental theorem of calculus,
	\begin{equation*}
	\begin{aligned}
	|w(x',\psi(x')) - w(x',0)| & \le \bigg| \int_0^{\psi(x')}  \frac{\partial}{\partial x_d} w(x',s) ds\bigg| \\
	& \le \int_{-Ct^{1+\alpha}}^{Ct^{1+\alpha}}  \bigg|\frac{\partial}{\partial x_d} w(x',s)\bigg| ds,
	\end{aligned}
	\end{equation*}
	where we have used the fact that $|\psi(x')| \le Ct^{1+\alpha}$. Hence, by the H\"{o}lder's  inequality,
	\begin{equation*}
	\begin{aligned}
	I_1 & \le C |g_0| \int_{T_t} \int_{-Ct^{1+\alpha}}^{Ct^{1+\alpha}}  \bigg|\frac{\partial}{\partial x_d} w(x',s)\bigg| dsd\sigma \\
	& \le C|g_0| t^{(d+\alpha)/2} \norm {\nabla w}_{L^2(Q_t)}.
	\end{aligned}
	\end{equation*}
	
	To estimate $I_2$, note that
	\begin{equation*}
	\big| \sqrt{1+|\nabla \psi(x')|^2} - 1 \big| \le C|\nabla \psi(x')| \le Ct^\alpha.
	\end{equation*}
	Thus, the trace theorem implies that
	\begin{equation*}
	I_2 \le Ct^{d/2+\alpha} |g_0| \norm {\nabla w}_{L^2(Q_t)}.
	\end{equation*}
	Combining the estimates of $I_1$ and $I_2$, we obtain
	\begin{equation*}
	\bigg| \int_{S_t} g_0 \cdot w d\sigma -\int_{T_t} g_0 \cdot w d\sigma \bigg| \le C t^{(d+\alpha)/2} |g_0| \norm {\nabla w}_{L^2(Q_t)}.
	\end{equation*}
	As a result, one sees from (\ref{est.stable.w}) that
	\begin{equation*}
	\begin{aligned}
	\bigg( \fint_{Q_t} |\nabla w|^2 \bigg)^{1/2} & \le Ct^{\alpha} \Big( [g]_{C^\alpha(S_t)} + \sum_{\pm} [h]_{C^\alpha(D_{t,\pm})} \Big) + Ct^{\alpha/2}|g(0)|.
	\end{aligned}
	\end{equation*}
	This implies the desired estimate, due to the Poincar\'{e} inequality.
\end{proof}

\begin{remark}
	Even though the stability lemma was stated and proved for the problem with coefficient tensor $\widehat{A}$, it holds actually for any constant coefficient tensor.
\end{remark}

\subsection{Excess and its properties}
As we have seen in Section 2, certain excess decay estimate is crucial in Schauder theory. To begin with, we state a lemma analogous to Lemma \ref{lem.C1a.excess} for system (\ref{eq.TP.flat}). This is the only place that we use the $C^{1,\alpha}$ estimate of the homogenized problems.

\begin{lemma}\label{lem.vt}
	Let $v_t$ be as above. Then,
	for any $\rho \in (0,t/2)$,
	\begin{equation*}
	\begin{aligned}
	& \inf_{\ell \in \mathscr{L}} \bigg\{ \bigg( \fint_{Q_{\rho}} |v_t - \ell|^2 \bigg)^{1/2} + \rho \norm {g - \mathscr{T}(\ell)}_{L^\infty(S_\rho)} \} \\
	& \qquad \le C(\rho/t)^{1+\alpha} \inf_{\ell \in \mathscr{L}} \bigg\{ \bigg( \fint_{Q_{t}} |v_t - \ell|^2 \bigg)^{1/2} + t \norm {g - \mathscr{T}(\ell)}_{L^\infty(S_t)} \bigg\} + C\rho^{1+\alpha} [g]_{C^\alpha(S_t)}.
	\end{aligned}
	\end{equation*}
	where $\beta \in (0,1)$.
\end{lemma}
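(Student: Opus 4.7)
The plan is to reduce the statement to the already-proved flat-interface excess decay estimate for $v_t$ (Lemma \ref{lem.C1a.excess}) and to exchange the pointwise constant $|g_0 - \mathscr{T}(\ell)|$ for the $L^\infty$-norm $\norm{g - \mathscr{T}(\ell)}_{L^\infty(S_r)}$ by means of the $C^\alpha$ regularity of $g$.

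Since $v_t$ solves the flat-interface problem (\ref{eq.TP.flat}) with constant transmission data $g_0 = g(0)$, Lemma \ref{lem.C1a.excess} applies directly to $v_t$. The proof of that lemma, built on Lemma \ref{lem.g0} and a pointwise $C^{1,\beta}$ expansion at the origin, is purely interior in character, and an inspection shows that it carries over from balls to cylinders $Q_r$ with only dimensional constants adjusted (one may compare $L^2$ averages via the inclusions $B_r \subset Q_r \subset B_{r\sqrt{d}}$). For any $\beta \in (\alpha, 1)$ and $\rho \in (0, t/2)$, this yields
\begin{equation*}
\inf_{\ell \in \mathscr{L}} \left\{ \left( \fint_{Q_\rho} |v_t - \ell|^2 \right)^{1/2} + \rho|g_0 - \mathscr{T}(\ell)| \right\} \le C\left(\frac{\rho}{t}\right)^{1+\beta} \inf_{\ell \in \mathscr{L}} \left\{ \left( \fint_{Q_t} |v_t - \ell|^2 \right)^{1/2} + t|g_0 - \mathscr{T}(\ell)| \right\}.
\end{equation*}

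Because each $\mathscr{T}(\ell) \in \R^m$ is a constant vector and $S$ is $C^{1,\alpha}$ with tangent plane $\{x_d = 0\}$ at $0$, one has $\mathrm{diam}(S_r) \le Cr$ for small $r$. Hence, for any $\ell \in \mathscr{L}$ and $r \le t$,
\begin{equation*}
\bigl| |g_0 - \mathscr{T}(\ell)| - \norm{g - \mathscr{T}(\ell)}_{L^\infty(S_r)} \bigr| \le \norm{g - g_0}_{L^\infty(S_r)} \le Cr^\alpha [g]_{C^\alpha(S_t)}.
\end{equation*}
Applying this with $r = \rho$ on the left-hand side and with $r = t$ on the right-hand side of the previous display produces two error terms, both bounded by $C\rho^{1+\alpha}[g]_{C^\alpha(S_t)}$: the one at scale $\rho$ directly, and the one at scale $t$ because $(\rho/t)^{1+\beta} \, t^{1+\alpha} = \rho^{1+\beta}\, t^{\alpha-\beta} \le \rho^{1+\alpha}$ whenever $\beta > \alpha$ and $\rho \le t$. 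Since $\rho/t \le 1$, the prefactor $(\rho/t)^{1+\beta}$ is also dominated by $(\rho/t)^{1+\alpha}$, which delivers the advertised inequality.

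The only non-routine step I foresee is confirming the cylinder analog of Lemma \ref{lem.C1a.excess}; the rest is routine infimum bookkeeping together with the $C^\alpha$ triangle inequality, and I do not expect any substantial obstacle beyond that verification.
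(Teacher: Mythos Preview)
Your proposal is correct and follows essentially the same route as the paper: apply Lemma~\ref{lem.C1a.excess} to $v_t$ (with the constant datum $g_0$) and then swap $|g_0-\mathscr{T}(\ell)|$ for $\norm{g-\mathscr{T}(\ell)}_{L^\infty(S_r)}$ using $\norm{g-g_0}_{L^\infty(S_r)}\le Cr^\alpha[g]_{C^\alpha(S_t)}$. The only minor simplification you overlooked is that, since $0\in S_t$ and $g_0=g(0)$, one has $|g_0-\mathscr{T}(\ell)|\le \norm{g-\mathscr{T}(\ell)}_{L^\infty(S_t)}$ directly, so no error term arises on the right-hand side and one may take $\beta=\alpha$ in Lemma~\ref{lem.C1a.excess} from the outset, avoiding the $\beta>\alpha$ detour.
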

\begin{proof}
	First, using Lemma \ref{lem.C1a.excess}, we have
	\begin{equation*}
	\begin{aligned}
	& \inf_{\ell \in \mathscr{L}} \bigg\{ \bigg( \fint_{Q_{\rho}} |v_t - \ell|^2 \bigg)^{1/2} + \rho \norm {g_0 - \mathscr{T}(\ell)}_{L^\infty(S_\rho)} \} \\
	& \qquad \le C(\rho/t)^{1+\alpha} \inf_{\ell \in \mathscr{L}} \bigg\{ \bigg( \fint_{Q_{t}} |v_t - \ell|^2 \bigg)^{1/2} + t \norm {g_0 - \mathscr{T}(\ell)}_{L^\infty(S_t)} \bigg\}.
	\end{aligned}
	\end{equation*}
	Then we only need to replace $g_0$ by $g$ in the last inequality. This is clear if one notices that $\norm {g -g_0}_{L^\infty(S_\rho)} \le C\rho^\alpha [g]_{C^{\alpha}(S_\rho)}$ for any $\rho>0$.
\end{proof}

The above lemma inspires us to define the excess for an arbitrary function $u\in L^2(\Omega;\R^m)$ by
\begin{equation*}
\begin{aligned}
H(u;t) := \inf\limits_{\substack{\ell \in \mathscr{L}}}  \bigg\{ \frac{1}{t}   \bigg( \fint_{Q_t} |u - \ell |^2 \bigg)^{1/2} + \Norm {g - \mathscr{T}(\ell) }_{L^\infty(S_t)} \bigg\},
\end{aligned}
\end{equation*}
where $\mathscr{L}$ and $\mathscr{T}(\ell)$ are defined in Definition \ref{def.Lset} and $\ell(x): = Mx + q = M_+ x \mathbbm{1}_{\{x_d>0\}} + M_- x \mathbbm{1}_{\{x_d<0\}} + q$.
Another quantity involved is
\begin{equation*}
\Phi(u;t) := \inf_{ q \in\R^d} \bigg\{ \frac{1}{t}  \bigg( \fint_{Q_t} |u - q|^2 \bigg)^{1/2} \bigg\} + \Norm {g}_{L^\infty(S_t)}.
\end{equation*}
Observe that, for any $u\in L^2(Q_1;\R^m)$ and $t\in (0,1)$, we have
\begin{equation}\label{est.H<Phi}
H(u;t) \le \Phi(u;t).
\end{equation}
Some properties of $H$ and $\Phi$ are given below.
\begin{proposition}\label{prop.HPhi}
	For any function $u\in L^2(Q_1;\R^m)$, there exists a measurable function $h = h_u:(0,1) \to \R_+$ such that
	\begin{equation}\label{est.H.Phi}
	\left\{
	\begin{aligned}
	\Phi(u;r) &\le H(u;r) + C h(r) \\
	h(r) & \le C(H(u;r) + \Phi(u;r)) \\
	\sup_{r\le t \le 2r} H(u;t) &\le CH (u;2r)\\
	\sup_{r\le t \le 2r} \Phi(u;t) &\le C\Phi (u;2r)\\
	\sup_{r\le s,t \le 2r} | h(s) - h(t)| &\le C H(u;2r), 
	\end{aligned} 
	\right.
	\end{equation}
	for any $t\in (0,1/2)$.
\end{proposition}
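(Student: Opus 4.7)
The plan is to let $h(r)$ be defined through the ``affine part'' of a (near-)minimizer of the infimum defining $H(u;r)$. For each $r \in (0,1)$, pick $\ell_r \in \mathscr{L}$ within a factor of $2$ of this infimum and write $\ell_r(x) = M_r x + q_r$ (meaning $M_r = M_{r,+}\mathbbm{1}_{x_d>0} + M_{r,-}\mathbbm{1}_{x_d<0}$ in the sense of Definition \ref{def.Lset}). Set $h(r) := |M_r|$, where the selection $r \mapsto \ell_r$ is chosen measurably (since the infimum is taken over the finite-dimensional subspace $\mathscr{L}$, this is standard). The central elementary observation is the following norm equivalence: since $\mathscr{L}/\R^m$ is finite-dimensional and the map $\ell = Mx + q \mapsto \inf_{q' \in \R^m} (\fint_{Q_1}|\ell - q'|^2)^{1/2}$ is a seminorm that vanishes exactly when $M = 0$, a scaling argument gives
\begin{equation*}
c|M| \;\le\; \inf_{q' \in \R^m} \frac{1}{r}\Bigl(\fint_{Q_r} |\ell - q'|^2\Bigr)^{1/2} \;\le\; C|M|, \qquad \ell(x) = Mx+q \in \mathscr{L},
\end{equation*}
uniformly in $r>0$. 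I will also use that $|\mathscr{T}(\ell)| \le C|M|$, which is immediate from its definition.

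For the first two inequalities I apply the triangle inequality to $u - q_r = (u - \ell_r) + M_r x$ and pair it with $\|g\|_{L^\infty(S_r)} \le \|g - \mathscr{T}(\ell_r)\|_{L^\infty(S_r)} + C|M_r|$, which together yield $\Phi(u;r) \le H(u;r) + Ch(r)$. Conversely, using the norm equivalence above together with the triangle inequality,
\begin{equation*}
h(r) = |M_r| \;\le\; C\inf_{q'}\frac{1}{r}\Bigl(\fint_{Q_r}|\ell_r - q'|^2\Bigr)^{1/2} \;\le\; \frac{C}{r}\Bigl(\fint_{Q_r}|u-\ell_r|^2\Bigr)^{1/2} + C\inf_{q'}\frac{1}{r}\Bigl(\fint_{Q_r}|u-q'|^2\Bigr)^{1/2},
\end{equation*}
which is at most $C(H(u;r) + \Phi(u;r))$.

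The two doubling properties are straightforward. Given $t \in [r, 2r]$, I test $H(u;t)$ against the minimizer $\ell^*$ for $H(u;2r)$ and use $|Q_{2r}|/|Q_t| \le 2^d$, $\frac{1}{t} \le \frac{4}{2r}$, and monotonicity of the $L^\infty$ norm on the shrinking interface $S_t \subset S_{2r}$. The same argument applied to $\Phi$ with the best constant approximant at scale $2r$ gives the $\Phi$-doubling.

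The main obstacle is the oscillation bound for $h$. Here I again use that $\mathscr{L}$ is a vector space, so $\ell_s - \ell_t \in \mathscr{L}$, and the norm equivalence gives $|M_s - M_t| \le C\inf_{q'} \frac{1}{r}(\fint_{Q_r}|(\ell_s - \ell_t) - q'|^2)^{1/2}$. Taking $q' = q_s - q_t$ and using the triangle inequality with $Q_r \subset Q_s, Q_t$ (so $|Q_s|/|Q_r|, |Q_t|/|Q_r| \le 2^d$),
\begin{equation*}
|M_s - M_t| \;\le\; \frac{C}{r}\Bigl(\fint_{Q_r}|u - \ell_s|^2\Bigr)^{1/2} + \frac{C}{r}\Bigl(\fint_{Q_r}|u - \ell_t|^2\Bigr)^{1/2} \;\le\; C\,H(u;s) + C\,H(u;t),
\end{equation*}
and applying the already-proved doubling for $H$ gives the desired bound $|h(s) - h(t)| = ||M_s| - |M_t|| \le |M_s - M_t| \le CH(u;2r)$.
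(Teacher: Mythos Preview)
Your proof is correct and follows essentially the same approach as the paper. The only cosmetic difference is that the paper takes an exact minimizer $\ell_t$ and sets $h(t)=(\fint_{Q_t}|\nabla\ell_t|^2)^{1/2}$, whereas you take a near-minimizer and set $h(r)=|M_r|$; these are equivalent up to constants since $\nabla\ell_t$ is piecewise constant, and the remaining steps (triangle inequality, testing at scale $2r$, and the split via $u$ for the oscillation bound) are identical.
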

\begin{proof}
	Let $\ell_t$ be the piecewise linear solution in $\mathscr{L}$ that minimizes $H(u,t)$. Define
	\begin{equation*}
	h(t) = \bigg( \fint_{Q_t} |\nabla \ell_t|^2 \bigg)^{1/2}.
	\end{equation*}
	Note that if $\ell_t = M_+ x \mathbbm{1}_{\{x_d>0\}} + M_- x \mathbbm{1}_{\{x_d<0\}} + q$, then $\nabla \ell_t = M_+^* \mathbbm{1}_{\{x_d>0\}} + M_-^* \mathbbm{1}_{\{x_d<0\}} $ and $h(t) \simeq |M_+| + |M_-|$.
	Hence, the triangle inequality yields
	\begin{equation*}
	\Phi(u;r) \le H(u;r) + \frac{1}{r}\bigg( \fint_{Q_r} |x\cdot \nabla \ell_r|^2 \bigg)^{1/2} + |\mathscr{T}(\ell_r)| \le H(u;r) + Ch(r).
	\end{equation*}
	This is the first part in (\ref{est.H.Phi}). The second part in (\ref{est.H.Phi}) follows similarly by the triangle inequality.
	
	Now, we show the third part of (\ref{est.H.Phi}). If $r\le t\le 2r$,
	\begin{equation*}
	\begin{aligned}
	H(u;t) & = \frac{1}{t}   \bigg( \fint_{Q_t} |u - \ell_t |^2 \bigg)^{1/2} + \Norm {g - \mathscr{T}(\ell_t) }_{L^\infty(S_t)} \\
	& \le \frac{1}{t}   \bigg( \fint_{Q_t} |u - \ell_{2r} |^2 \bigg)^{1/2} + \Norm {g - \mathscr{T}(\ell_{2r}) }_{L^\infty(S_t)} \\
	& \le \frac{C}{2r}   \bigg( \fint_{Q_{2r}} |u - \ell_{2r} |^2 \bigg)^{1/2} + \Norm {g - \mathscr{T}(\ell_{2r}) }_{L^\infty(S_{2r})} \\
	& \le CH(u;2r),
	\end{aligned}
	\end{equation*}
	where in the second inequality, we enlarged the region from $Q_t$ to $Q_{2r}$, whose volumes are comparable, so that the norms are also enlarged up to a constant. The forth part of (\ref{est.H.Phi}) follows similarly by enlarging the region (or simply by the first three parts of (\ref{est.H.Phi}) and (\ref{est.H<Phi})).
	
	Finally, to show the last part in (\ref{est.H.Phi}), we use the triangle inequality and the third part of (\ref{est.H.Phi}) and derive
	\begin{equation*}
	\begin{aligned}
	|h(s) - h(t)| & \le C\bigg( \fint_{Q_r} |\nabla \ell_t - \nabla \ell_s|^2 \bigg)^{1/2} \\
	& \le C \inf_{ q \in\R^d} \frac{1}{r} \bigg( \fint_{Q_r} |x\cdot \nabla \ell_t - x\cdot \nabla \ell_s - q|^2 \bigg)^{1/2} \\
	& \le C\inf_{ q \in\R^d} \frac{1}{r} \bigg( \fint_{Q_r} |u - x\cdot \nabla \ell_t - q|^2 \bigg)^{1/2} + C\inf_{ q \in\R^d} \frac{1}{r} \bigg( \fint_{Q_r} |u - x\cdot \nabla \ell_s - q|^2 \bigg)^{1/2} \\
	& \le C\inf_{ q \in\R^d} \frac{1}{t} \bigg( \fint_{Q_t} |u - x\cdot \nabla \ell_t - q|^2 \bigg)^{1/2} + C\inf_{ q \in\R^d} \frac{1}{s} \bigg( \fint_{Q_s} |u - x\cdot \nabla \ell_s - q|^2 \bigg)^{1/2} \\
	& \le CH(u;t) + CH(u;s) \\
	& \le CH(u;2r).
	\end{aligned}
	\end{equation*}
	This completes the proof.
\end{proof}

\subsection{Decay estimates}
The decay estimate relies on the scale $r$ compared to $\e_+$ and $\e_-$. The following lemma deals with the case $0<\e_+<\e_- < r < 1$.
\begin{lemma}\label{lem.excess decay}
	Suppose $0<\e_+ < \e_- < 1$. Let $u^\e$ be a weak solution of (\ref{eq.TP.e}). Then there exists $\theta \in (0,1/4)$ so that
	\begin{equation*}
	H(u^\e;\theta r) \le \frac{1}{2} H(u^\e;r) + C (\e_-/r)^\sigma \Phi(u^\e; 2r) + C r^{\alpha/2} B,
	\end{equation*}
	for every $\e_- < r<1/2$, where
	\begin{equation}\label{def.B}
	B = B(h,g) : = \sum_{\pm} [h]_{C^\alpha(D_{1,\pm})} + \norm {g}_{C^\alpha(S_1)} .
	\end{equation}
\end{lemma}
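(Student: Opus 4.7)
The plan is to bound $H(u^\e;\theta r)$ by a three-step comparison --- homogenization, interface flattening, and a Schauder-type decay on the flat constant-coefficient model --- glued together by two applications of the triangle inequality. Throughout, fix $r\in(\e_-,1/2)$, let $B$ be as in (\ref{def.B}), and $\sigma$ be the exponent from Theorem \ref{thm.mainrate}.

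Step 1 (Homogenization comparison). Let $u^0$ solve the homogenized transmission problem (\ref{eq.TP.homogenized}) on $Q_r$ with $u^0=u^\e$ on $\partial Q_r$ and the same $h,g$. Rescale $Q_r$ to the unit cylinder so that the effective microscales become $\e_\pm/r\in(0,1)$; since $\e_+\le\e_-$, the rate in Theorem \ref{thm.mainrate}(i) collapses to $C(\e_-/r)^\sigma$. The resulting right-hand side contains an $H^{1/2}$-norm of the trace $u^\e|_{\partial Q_r}$, which is dominated by the interior energy $\norm{\nabla u^\e}_{L^2(Q_r)}$; a Caccioppoli-type estimate applied to $u^\e$ on the enlarged cylinder $Q_{2r}$ (after subtracting the best constant approximant) bounds that energy in terms of $r^{d/2}\Phi(u^\e;2r)+r^{d/2}B$. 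After unscaling this yields
\begin{equation*}
r^{-1}\Norm{u^\e-u^0}_{L^2(Q_r)}\le C(\e_-/r)^\sigma\bigl(\Phi(u^\e;2r)+r^\alpha B\bigr).
\end{equation*}

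Step 2 (Interface flattening and Schauder decay). Let $v_r$ solve (\ref{eq.TP.flat}) on $Q_r$ with $v_r=u^0$ on $\partial Q_r$, flat interface $T_r$, and constant transmission datum $g_0=g(0)$. Because $|g_0|\le\norm{g}_{L^\infty(S_1)}\le CB$, Lemma \ref{lem.interface stability} gives $r^{-1}\Norm{u^0-v_r}_{L^2(Q_r)}\le Cr^{\alpha/2}B$. Since $v_r$ solves a constant-coefficient flat-interface system, Lemma \ref{lem.vt} with $t=r$ and $\rho=\theta r$ yields, for any $\theta\in(0,1/4)$,
\begin{equation*}
H(v_r;\theta r)\le C\theta^\alpha H(v_r;r)+C(\theta r)^\alpha[g]_{C^\alpha(S_r)},
\end{equation*}
and the last term is bounded by $Cr^{\alpha/2}B$.

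Step 3 (Assembly). Two applications of the triangle inequality --- one at scale $\theta r$ estimating $H(u^\e;\theta r)$ by $H(v_r;\theta r)$ plus $(\theta r)^{-1}(\Norm{u^\e-u^0}_{L^2(Q_r)}+\Norm{u^0-v_r}_{L^2(Q_r)})$, and one at scale $r$ estimating $H(v_r;r)$ by $H(u^\e;r)$ plus $r^{-1}(\Norm{u^\e-u^0}_{L^2(Q_r)}+\Norm{u^0-v_r}_{L^2(Q_r)})$ --- combined with Steps 1 and 2 give
\begin{equation*}
H(u^\e;\theta r)\le C\theta^\alpha H(u^\e;r)+C_\theta(\e_-/r)^\sigma\Phi(u^\e;2r)+C_\theta r^{\alpha/2}B.
\end{equation*}
Fixing $\theta\in(0,1/4)$ small enough that $C\theta^\alpha\le 1/2$ and absorbing $C_\theta$ into $C$ completes the proof. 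The main obstacle is the bookkeeping in Step 1: Theorem \ref{thm.mainrate} is phrased as a Dirichlet boundary-value estimate, so one must replace the $H^{1/2}$ trace norm of $u^\e$ by an intrinsic quantity --- namely $\Phi(u^\e;2r)$ plus additive $C^\alpha$-data --- via a Caccioppoli argument on the enlarged cylinder; without this replacement the iteration would accumulate boundary-norm losses scale by scale and fail to close.
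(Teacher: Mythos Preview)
Your proposal is correct and follows essentially the same three-step strategy as the paper: homogenization comparison via Theorem \ref{thm.mainrate}, interface flattening via Lemma \ref{lem.interface stability}, and excess decay via Lemma \ref{lem.vt}, assembled by the triangle inequality and a choice of $\theta$ with $C\theta^\alpha\le 1/2$. The only difference is presentational: where you spell out the trace-plus-Caccioppoli argument to convert the $H^{1/2}$ boundary norm of $u^\e$ into $\Phi(u^\e;2r)$, the paper simply records the rescaled estimate with $\tfrac{1}{r}\bigl(\fint_{Q_{2r}}|u^\e|^2\bigr)^{1/2}$ on the right and then subtracts the optimal constant $q$ (noting $u^\e-q$ is again a solution) to recognize $\Phi(u^\e;2r)$ directly.
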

\begin{proof}
	The proof combines the convergence rate (Theorem \ref{thm.mainrate}), interface stability (Lemma \ref{lem.interface stability}) and excess decay estimate for the homogenized equations with flat interface (Lemma \ref{lem.vt}). 
	Let $r < 1/2$ be fixed. Let $u^0$ be the weak solution of
	\begin{equation}\label{eq.TP.u0rate}
	\left\{
	\begin{aligned}
	\nabla\cdot (\widehat{A} \nabla u^0) &= \nabla\cdot h \qquad &\text{ in }& Q_r \setminus S_r, \\
	u^0 &= u^\e \qquad & \txt{ on } & \partial Q_r,\\
	u^0_+ &= u^0_- \qquad &\text{ on }&  S_r,\\
	\Big( \frac{\partial u^0}{\partial \nu_0} \Big)_+ - \Big( \frac{\partial u^0}{\partial \nu_0} \Big)_- &= g \qquad &\text{ on }& S_r.
	\end{aligned} 
	\right.
	\end{equation}
	Since $0<\e_+ \le \e_- < r < 1$, by Theorem \ref{thm.mainrate} part (i) and rescaling, we have
	\begin{equation*}
	\bigg( \fint_{Q_r} |u^\e - u^0|^2 \bigg)^{1/2} \le C r(\e_-/r)^\sigma \bigg\{  r^\alpha \sum_{\pm} [h]_{C^\alpha(Q_{2r,\pm})} + \norm {g}_{L^\infty(S_{2r})} + \frac{1}{r}\bigg( \fint_{Q_{2r}} |u^\e|^2 \bigg)^{1/2} \bigg\}.
	\end{equation*}
	Note that $u^\e - q$ is also a weak solution for any $q\in \R^d$. Then, in view of the definition of $\Phi(u^\e;t)$, we obtain
	\begin{equation}\label{est.ue-u0}
	\frac{1}{r}\bigg( \fint_{Q_r} |u^\e - u^0|^2 \bigg)^{1/2} \le C(\e_-/r)^\sigma \Phi(u^\e;2r) + r^\alpha \sum_{\pm} [h]_{C^\alpha(Q_{1,\pm})}.
	\end{equation}
	
	Next, let $v_r$ be the weak solution of (\ref{eq.TP.flat}). We derive from the interface stability result (Lemma \ref{lem.interface stability}) that
	\begin{equation}\label{est.u0-vr}
	\frac{1}{r}\bigg( \fint_{Q_r} |u^0 - v_r| \bigg)^{1/2} \le Cr^{\alpha/2} \Big(\sum_{\pm} [h]_{C^\alpha(Q_{1,\pm})} + \norm {g}_{C^\alpha(S_1)} \Big).
	\end{equation}
	
	Finally, applying Lemma \ref{lem.vt} and using the definition of $H(v_r;t)$, we have that for any $\theta \in (0,1)$
	\begin{equation*}
	H(v_r;\theta r) \le C\theta^\alpha H(v_r; r) + C r^\alpha [g]_{C^\alpha(S_1)}.
	\end{equation*}
	Choosing and fixing $\theta \in (0,1)$ small enough so that
	$C\theta^\alpha = 1/2$, then
	\begin{equation}\label{est.vr.C1a}
	H(v_r;\theta r) \le \frac{1}{2} H(v_r; r) + C r^\alpha [g]_{C^\alpha(S_1)}.
	\end{equation}
	
	Combining (\ref{est.ue-u0}), (\ref{est.u0-vr}) and (\ref{est.vr.C1a}), we obtain
	\begin{equation*}
	H(u^\e;\theta r) \le \frac{1}{2} H(u^\e; r)  + C(\e_-/r)^\sigma \Phi(u^\e;2r)+ Cr^{\alpha/2} B,
	\end{equation*}
	as desired.
\end{proof}

The next lemma deals with the case $0< \e_+ < r < \e_- < 1$.
\begin{lemma}\label{lem.excess decay2}
	Suppose $0<\e_+ < \e_- < 1$ and $A_-$ is $C^\alpha$-H\"{o}lder continuous. Let $u^\e$ be a weak solution of (\ref{eq.TP.e}). Then there exists $\theta \in (0,1/4)$ so that
	\begin{equation*}
	H(u^\e;\theta r) \le \frac{1}{2} H(u^\e;r) + C \big\{ (\e_-/r)^{-\alpha} + (\e_+/r)^\sigma \big\} \Phi(u^\e; 2r) + C r^{\alpha/2} B,
	\end{equation*}
	for every $\e_+ < r< \e_-$, where $B$ is the same as (\ref{def.B}).
\end{lemma}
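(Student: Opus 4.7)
The plan is to adapt the three-step argument of Lemma \ref{lem.excess decay} to the regime $\e_+ < r < \e_-$, replacing the two-sided homogenization convergence rate (Theorem \ref{thm.mainrate} part (i)) by the one-sided rate from part (ii). The underlying reason is that at scale $r$ the coefficient $A_+$ has already begun to homogenize (since $\e_+/r < 1$), while $A_-$ only sees $x/\e_-$ at scale $r/\e_- \ll 1$, so $A_-^{\e_-}$ is well-approximated by the constant $A_-(0)$ with error controlled by the $C^\alpha$ character of $A_-$.

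I would first introduce the one-sided homogenized auxiliary $\bar{u}$ on $Q_r$ solving the transmission problem with the piecewise constant coefficient $\overline{A}$ from \eqref{def.Abar} (namely $\widehat{A}_+$ on $\Omega_+$ and $A_-(0)$ on $\Omega_-$), with the same Dirichlet data $u^\e|_{\partial Q_r}$, source $\nabla\cdot h$, and transmission data $g$. Rescaling $Q_r$ to $Q_1$ converts the microscopic parameters to $\e_+/r < 1 < \e_-/r$, so Theorem \ref{thm.mainrate} part (ii) applies (using the $C^\alpha$-H\"older assumption on $A_-$) and yields, after undoing the rescaling and replacing $u^\e$ by $u^\e - q$ for an arbitrary $q \in \R^m$,
\begin{equation*}
\frac{1}{r}\bigg( \fint_{Q_r} |u^\e - \bar{u}|^2 \bigg)^{1/2} \le C\big((\e_+/r)^\sigma + (\e_-/r)^{-\alpha}\big) \Phi(u^\e; 2r) + Cr^\alpha \sum_{\pm} [h]_{C^\alpha(Q_{1,\pm})}.
\end{equation*}
Next, let $v_r$ be the weak solution of the flat-interface problem \eqref{eq.TP.flat} with $\widehat{A}$ replaced by $\overline{A}$. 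The interface-stability estimate (Lemma \ref{lem.interface stability}), which by the Remark following it holds for any constant-coefficient tensor, gives $\frac{1}{r}( \fint_{Q_r} |\bar{u} - v_r|^2 )^{1/2} \le C r^{\alpha/2} B$. Finally, Lemma \ref{lem.vt} applied to $v_r$ (with $\mathscr{T}$ taken with respect to $\overline{A}$) yields $H(v_r; \theta r) \le C\theta^\alpha H(v_r; r) + Cr^\alpha [g]_{C^\alpha(S_1)}$; choosing $\theta \in (0,1/4)$ so that $C\theta^\alpha = 1/2$ and combining the three estimates via the triangle inequality in the definition of $H$ produces the claimed decay.

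The main subtlety is not any individual step but rather a bookkeeping issue concerning the definition of $\mathscr{T}$ in $H(u^\e;\cdot)$: here it must be interpreted with respect to $\overline{A}$ rather than $\widehat{A}$ (as was the case in Lemma \ref{lem.excess decay}), since we are comparing $u^\e$ to a one-sided homogenized object. The discrepancy between the two interpretations of $\mathscr{T}(\ell)$ is $|(\widehat{A}_- - A_-(0)) M_-^*| \lesssim (\e_-/r)^{-\alpha} h(r)$ after rescaling, which is already absorbed in the $(\e_-/r)^{-\alpha}\Phi(u^\e; 2r)$ term via \eqref{est.H.Phi}. This consistency is what ultimately forces the $(\e_-/r)^{-\alpha}$ term to appear on the right-hand side, and it is crucial for splicing the current lemma into the iteration of Shen's framework (Lemma \ref{lem.excess decay} at scales $r > \e_-$ followed by the present lemma at scales $\e_+ < r < \e_-$) in the proof of Theorem \ref{thm.Lip} part (ii).
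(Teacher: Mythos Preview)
Your three-step argument is exactly the paper's: introduce $\bar{u}$ on $Q_r$ with coefficient $\overline{A}$, apply Theorem~\ref{thm.mainrate}(ii) after rescaling, compare $\bar{u}$ to the flat-interface solution $v_r$ (built with $\overline{A}$) via Lemma~\ref{lem.interface stability} and its Remark, and finish with the excess decay of Lemma~\ref{lem.vt}. The paper compresses all of this into two sentences, noting that $\overline{A}$ is constant so the argument of Lemma~\ref{lem.excess decay} goes through verbatim.

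Your final paragraph, however, contains a genuine mistake. The discrepancy between the two versions of the transmission map is
\[
\mathscr{T}_{\widehat{A}}(\ell)-\mathscr{T}_{\overline{A}}(\ell)=e_d\cdot\big(A_-(0)-\widehat{A}_-\big)M_-^*,
\]
and $\widehat{A}_-$ and $A_-(0)$ are \emph{fixed} constant matrices, independent of $r$ and $\e_-$. No rescaling makes $|\widehat{A}_- - A_-(0)|$ small; the discrepancy is therefore of size $C|M_-|\simeq C\,h(r)$, not $(\e_-/r)^{-\alpha}h(r)$. If you tried to keep a single $\mathscr{T}$ and absorb this error, you would produce a term $C\,\Phi(u^\e;2r)$ with no small prefactor, and the iteration would collapse.

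The correct resolution---and what the paper tacitly does---is simply to take $H$ in this lemma with $\mathscr{T}$ defined via $\overline{A}$ rather than $\widehat{A}$. All the structural properties in Proposition~\ref{prop.HPhi} hold for either choice (they only use that $\mathscr{T}$ is linear and bounded in $(M_+,M_-)$), and in the proof of Theorem~\ref{thm.Lip}(ii) the iteration Lemma~\ref{lem.Hiteration} is applied \emph{separately} on $(\e_-,1)$ and on $(\e_+,\e_-)$. The only quantity linking the two runs is $\Phi$, which does not involve $\mathscr{T}$ at all, so no reconciliation of the two versions of $H$ is needed.
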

\begin{proof}
	The proof is similar to Lemma \ref{lem.excess decay}. Let $\e_+ < r<\e_-$. Let $\bar{u}$ be the weak solution of
	\begin{equation}\label{eq.TP.ubar.rate}
	\left\{
	\begin{aligned}
	\nabla\cdot (\overline{A} \nabla \bar{u}) &= \nabla\cdot h \qquad &\text{ in }& Q_r \setminus S_r, \\
	\bar{u} &= u^\e \qquad & \txt{ on } & \partial Q_r,\\
	\bar{u}_+ &= \bar{u}_- \qquad &\text{ on }&  S_r,\\
	\Big( \frac{\partial \bar{u}}{\partial \nu_0} \Big)_+ - \Big( \frac{\partial \bar{u}}{\partial \nu^\circ} \Big)_- &= g \qquad &\text{ on }& S_r.
	\end{aligned} 
	\right.
	\end{equation}
	Applying Theorem \ref{thm.mainrate} part (ii), we have
	\begin{equation*}
	\begin{aligned}
	\bigg( \fint_{Q_r} |u^\e - \bar{u}|^2 \bigg)^{1/2} & \le C r \big\{ (\e_-/r)^{-\alpha} + (\e_+/r)^\sigma \big\} \\
	& \qquad \times \bigg\{  r^\alpha \sum_{\pm} [h]_{C^\alpha(Q_{2r,\pm})} + \norm {g}_{L^\infty(S_{2r})} + \frac{1}{r}\bigg( \fint_{Q_{2r}} |u^\e|^2 \bigg)^{1/2} \bigg\}.
	\end{aligned}
	\end{equation*}
	Note that $\overline{A}$ is constant. So the solution $\bar{u}$ satisfies the same estimates as $u^0$ in Lemma \ref{lem.excess decay}. Then the desired estimate follows exactly by the same argument.
\end{proof}

\subsection{Iteration and conclusion}

The following iteration lemma is a generalized version of \cite[Lemma 8.5]{Sh17}. The proof is similar to \cite[Lemma 6.7]{GZ20} with obvious modifications.

\begin{lemma}\label{lem.Hiteration}
	Suppose that both $\omega_1$ and $\omega_2$ are non-negative increasing functions satisfying
	\begin{equation*}
	\sum_{i = 1}^2 \int_{0}^{1} \frac{\omega_i(r)}{r} dr < \infty.
	\end{equation*}
	Let $H(r),\Phi(r)$ and $h(r)$ be non-negative quantities such that there exist some constants $\theta\in (0,1/2), \sigma,\beta \in (0,1), C_0 > 0$ and $B>0$ so that for any $r\in (\delta,R)$,
	\begin{equation*}
	\left\{ 
	\begin{aligned}
	H(\theta r) &\le \frac{1}{2} H(r) + C_0 (\omega_1(\delta/r) + \omega_2(r/R)) \Phi(2r) + C_0 r^{\beta} B, \\
	H(r) & \le \Phi(r), \\
	h(r) & \le C_0( H(r) +\Phi(r)), \\
	\Phi(r) &\le H(r) + C_0 h(r), \\
	\sup_{r\le t \le 2r} \Phi(t) &\le C_0 \Phi(2r), \\
	\sup_{r\le s,t \le 2r} | h(s) - h(t)| &\le C_0 H(2r),
	\end{aligned}
	\right.
	\end{equation*}
	where $0<\delta<R\le 1$. 
	Then, there exists $C$ depending only on $\theta,\sigma,\beta,\omega_i$ and $C_0$ such that
	\begin{equation}\label{est.excess decay}
	\int_{\delta}^{R} \frac{H(r)}{r} dr + \sup_{\delta < r<R} \Phi(r)  \le C \Big(\Phi(R) + B \Big).
	\end{equation}
	The point here is that $C$ is independent of $\delta$ and $R$.
\end{lemma}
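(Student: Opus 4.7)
The plan is to adapt the bootstrap iteration of \cite[Lemma 8.5]{Sh17}, extended to handle both convergence-rate moduli $\omega_1$ and $\omega_2$ simultaneously. Let me write $M := \sup_{\delta < r < R}\Phi(r)$, $I := \int_\delta^R H(r)/r\,dr$, and $\omega(r) := \omega_1(\delta/r) + \omega_2(r/R)$. A change of variables using the integrability hypothesis shows $\int_\delta^R \omega(r)/r\,dr \le \eta$ for a finite constant $\eta$ depending only on $\omega_1,\omega_2$.

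First I would convert the recursive main inequality into an integral inequality by dividing by $r$, integrating over $(\delta, R)$, and substituting $s = \theta r$ on the left-hand side, obtaining
\[
\int_{\theta\delta}^{\theta R}\frac{H(s)}{s}\,ds \le \frac{1}{2}\int_\delta^R \frac{H(r)}{r}\,dr + C_0\int_\delta^R \frac{\omega(r)\Phi(2r) + r^{\beta}B}{r}\,dr.
\]
The discrepancy between the left-hand integral and $I$ is a pair of short boundary slices $\int_{\theta\delta}^{\delta}H/\tau$ and $\int_{\theta R}^{R}H/\tau$, both of which I control by $H\le\Phi$ and iterated applications of the factor-of-two bound $\sup_{r\le t\le 2r}\Phi(t)\le C_0\Phi(2r)$. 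After absorbing $\tfrac{1}{2}I$ to the left and using $\Phi(2r)\le M$, this yields $I \le C(\Phi(R) + \eta M + B)$.

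Next, applying the oscillation hypothesis to the dyadic sequence $\rho_k := 2^{-k}R$ (for which $\rho_{k+1},\rho_k \in [\rho_{k+1}, 2\rho_{k+1}]$) gives $|h(\rho_k)-h(\rho_{k+1})| \le C_0 H(\rho_k)$. Telescoping over $k$ until $\rho_k$ drops below a given $r$, and converting the resulting discrete sum into $I$ via the factor-of-two comparison, yields $|h(r) - h(R)| \le CI$ for all $r\in(\delta,R)$. Combined with $h(R) \le C_0(H(R)+\Phi(R)) \le 2C_0\Phi(R)$, this gives $h(r) \le C(\Phi(R) + I)$. Inserting into $\Phi(r) \le H(r) + C_0 h(r) \le M + C_0 h(r)$ and taking the supremum over $r$ produces the closure inequality
\[
M \le C(\Phi(R) + I + B) \le C(\Phi(R) + B) + C\eta M.
\]

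The main obstacle is that $\eta$ is a fixed constant from the integrability hypothesis and is not necessarily small, so the term $C\eta M$ cannot be absorbed directly. The remedy, standard in this line of work, is to partition $(\delta, R)$ into finitely many slabs $\delta = R_N < R_{N-1} < \cdots < R_0 = R$ chosen so that each local integral $\int_{R_i}^{R_{i-1}}\omega(r)/r\,dr$ is smaller than a threshold $\epsilon_0 < 1/(2C)$; such a decomposition exists because the total integral is finite and absolutely continuous. Re-running the three-step argument on each slab, with $\epsilon_0$ replacing $\eta$, closes the bootstrap locally and gives $\sup_{R_i \le r \le R_{i-1}}\Phi \le C(\Phi(R_{i-1}) + \Phi(R_i) + B)$. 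A finite induction in $i$ from the outermost slab inward, feeding each local bound into the next via the comparison $\Phi(R_i)\le M$, produces $M \le C(\Phi(R) + B)$, and substitution back into the integral estimate of the first step finishes the proof.
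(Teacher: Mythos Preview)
Your overall strategy matches the standard argument the paper cites (Shen's iteration, extended as in \cite{GZ20}): bound the accumulated values of $H$ via the recursion, telescope the oscillation of $h$, close through $\Phi \le H + C_0 h$, and partition into slabs to force the Dini integral small. However, two of your steps as written do not go through.

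First, the conversion in step~2 runs the wrong way. The factor-of-two hypothesis $\sup_{[r,2r]}\Phi \le C_0\Phi(2r)$ (and any analogue for $H$) yields \emph{upper} bounds on intermediate values in terms of $H(\rho_k)$, hence $I \le C\sum_k H(\rho_k)$, not the reverse inequality $\sum_k H(\rho_k) \le CI$ that you need. Second, and more seriously, the closure in step~3 is vacuous: from $\Phi(r) \le H(r) + C_0 h(r)$ you bound $H(r) \le M$, and taking the supremum then gives only $M \le M + C_0\sup_r h(r)$, which carries no information. The claimed bound $M \le C(\Phi(R)+I+B)$ does not follow.

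Both issues disappear if you run the argument discretely along a single scale sequence $r_k$ (either $\theta^k R$ or $2^{-k}R$, bridging the mismatch between $\theta$ and $1/2$ by finitely many factor-of-two steps). Step~1 then gives $J := \sum_k H(r_k) \le C(\Phi(R) + \eta M + B)$; step~2 gives $h(r_k) \le C\Phi(R) + C\sum_{j<k}H(r_j)$ directly; and the key point in step~3 is that $H(r_k)$ is itself one of the summands, so
\[
\Phi(r_k) \le H(r_k) + C_0 h(r_k) \le C\Phi(R) + C\sum_{j\le k}H(r_j) \le C\Phi(R) + CJ,
\]
whence (via the factor-of-two bound for intermediate $r$) $M \le C\Phi(R) + CJ \le C(\Phi(R) + \eta M + B)$. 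Your slab decomposition and finite induction then close correctly. The integral $\int_\delta^R H/r\,dr$ in the conclusion is recovered at the very end from your continuous step-1 inequality $I \le C(\Phi(R) + \eta M + B)$, once $M$ has been controlled.
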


Now, we are ready to prove the main theorem.
\begin{proof}[Proof of Theorem \ref{thm.Lip}]
	Part (i): 
	It follows from Proposition \ref{prop.HPhi} and Lemma \ref{lem.excess decay} that $H(r) = H(u_\e;r)$, $\Phi(r) = \Phi(u^\e;r)$ and $h(r)$ (defined in Lemma \ref{prop.HPhi}) satisfy the assumptions in Lemma \ref{lem.Hiteration} with
	\begin{equation*}
	\omega_1(r) = r^\sigma,  \quad \delta = \e_- \quad \txt{and} \quad R = \sqrt{2}/2.
	\end{equation*}
	Thus, (\ref{est.H<Phi}) gives
	\begin{equation}\label{est.e-to1}
	\sup_{\e_- < r<\sqrt{2}/2} \Phi(r)  \le C \Big(\Phi(\sqrt{2}/2) + B \Big),
	\end{equation}
	where $B$ is given in (\ref{def.B}). Note that $B_r \subset Q_r \subset B_{\sqrt{2}r}$.
	Then, by the standard Caccioppoli inequality for (\ref{eq.TP.e}) (see Remark \ref{rmk.Caccioppoli}) and the Poincar\'{e} inequality, we obtain (\ref{est.Lip}).
	
	Part (ii): In this case, Lemma \ref{lem.excess decay} and \ref{lem.excess decay2} both hold. Thus, we still have (\ref{est.e-to1}) which gives the estimate down to $\e_-$. To obtain an estimate further down to $\e_+$, we may apply Lemma \ref{lem.Hiteration} once more with
	\begin{equation*}
	\omega_1(r) = r^\sigma, \quad \omega_2(r) = r^\alpha,
	\quad \delta = \e_+ \quad \txt{and} \quad R = \e_-.
	\end{equation*}
	Hence, (\ref{est.H<Phi}) and (\ref{est.e-to1}) combined give
	\begin{equation*}
	\sup_{\e_+ < r<\e_-} \Phi(r)  \le C \Big(\Phi(\e_-) + B \Big) \le C \Big( \Phi(\sqrt{2}/2) + B\Big).
	\end{equation*}
	This implies the desired estimate.
	
	Part (iii): With Part (ii) at our disposal, it suffices to show
	\begin{equation*}
	|\nabla u^\e(0)| \le C \bigg\{ \bigg( \fint_{Q_{\e_+}} |\nabla u^\e|^2 \bigg)^{1/2} + B \bigg\}.
	\end{equation*}
	Indeed, because both $A_+$ and $A_-$ are $C^\alpha$-H\"{o}lder continuous, this follows from Theorem \ref{thm.Cka} with $k = 1$ and a blow-up argument.
\end{proof}

\begin{remark}\label{rmk.Caccioppoli}
	The standard Caccioppoli inequality for (\ref{eq.TP.e}) is as follows:
	\begin{equation*}
	\bigg( \fint_{Q_{r}} |\nabla u^\e|^2 \bigg)^{1/2} \le C \bigg\{ \frac{1}{r} \bigg( \fint_{Q_{2r}} |u^\e|^2 \bigg)^{1/2} + \sum_{\pm} r^\alpha [h]_{C^\alpha(D_{2r,\pm})} + \norm {g}_{L^\infty(S_{2r})} \bigg\}.
	\end{equation*}
	This can be proved by a routine argument combined with the trace theorem.
\end{remark}

\bibliographystyle{amsplain}
\bibliography{mybib}

\providecommand{\bysame}{\leavevmode\hbox to3em{\hrulefill}\thinspace}
\providecommand{\MR}{\relax\ifhmode\unskip\space\fi MR }
\providecommand{\MRhref}[2]{%
  \href{http://www.ams.org/mathscinet-getitem?mr=#1}{#2}
}
\providecommand{\href}[2]{#2}
\begin{thebibliography}{10}

\bibitem{AKM19}
S.~N. Armstrong, T.~Kuusi, and J.-C. Mourrat, \emph{Quantitative stochastic
  homogenization and large-scale regularity}, Grundlehren der Mathematischen
  Wissenschaften [Fundamental Principles of Mathematical Sciences], vol. 352,
  Springer, Cham, 2019. \MR{3932093}

\bibitem{AS16}
S.~N. Armstrong and C.~K. Smart, \emph{Quantitative stochastic homogenization
  of convex integral functionals}, Ann. Sci. \'{E}c. Norm. Sup\'{e}r. (4)
  \textbf{49} (2016), no.~2, 423--481. \MR{3481355}

\bibitem{BLL15}
X.~Blanc, C.~Le~Bris, and P.-L. Lions, \emph{Local profiles for elliptic
  problems at different scales: defects in, and interfaces between periodic
  structures}, Comm. Partial Differential Equations \textbf{40} (2015), no.~12,
  2173--2236. \MR{3421758}

\bibitem{CSS20}
L.~A. Caffarelli, M.~Soria-Carro, and P.~R. Stinga, \emph{Regularity for
  ${C}^{1,\alpha}$ interface transmission problems}, arXiv:2004.07322 (2020).

\bibitem{C57}
S.~Campanato, \emph{Sul problema di {M}. {P}icone relativo all'equilibrio di un
  corpo elastico incastrato}, Ricerche Mat. \textbf{6} (1957), 125--149.
  \MR{94579}

\bibitem{DKV88}
B.~E.~J. Dahlberg, C.~E. Kenig, and G.~C. Verchota, \emph{Boundary value
  problems for the systems of elastostatics in {L}ipschitz domains}, Duke Math.
  J. \textbf{57} (1988), no.~3, 795--818. \MR{975122}

\bibitem{Dong12}
H.~Dong, \emph{Gradient estimates for parabolic and elliptic systems from
  linear laminates}, Arch. Ration. Mech. Anal. \textbf{205} (2012), no.~1,
  119--149. \MR{2927619}

\bibitem{Dong20}
\bysame, \emph{A simple proof of regularity for ${C}^{1,\alpha}$ interface
  transmission problems}, arXiv:2004.09365 (2020).

\bibitem{DX19}
H.~Dong and L.~Xu, \emph{Gradient estimates for divergence form elliptic
  systems arising from composite material}, SIAM J. Math. Anal. \textbf{51}
  (2019), no.~3, 2444--2478. \MR{3961984}

\bibitem{EFV92}
L.~Escauriaza, E.~B. Fabes, and G.~Verchota, \emph{On a regularity theorem for
  weak solutions to transmission problems with internal {L}ipschitz
  boundaries}, Proc. Amer. Math. Soc. \textbf{115} (1992), no.~4, 1069--1076.
  \MR{1092919}

\bibitem{EM04}
L.~Escauriaza and M.~Mitrea, \emph{Transmission problems and spectral theory
  for singular integral operators on {L}ipschitz domains}, J. Funct. Anal.
  \textbf{216} (2004), no.~1, 141--171. \MR{2091359}

\bibitem{ES93}
L.~Escauriaza and J.~K. Seo, \emph{Regularity properties of solutions to
  transmission problems}, Trans. Amer. Math. Soc. \textbf{338} (1993), no.~1,
  405--430. \MR{1149120}

\bibitem{GS20}
J.~Geng and Z.~Shen, \emph{Homogenization of parabolic equations with
  non-self-similar scales}, Arch. Ration. Mech. Anal. \textbf{236} (2020),
  no.~1, 145--188. \MR{4072212}

\bibitem{GM12}
M.~Giaquinta and L.~Martinazzi, \emph{An introduction to the regularity theory
  for elliptic systems, harmonic maps and minimal graphs}, second ed., Appunti.
  Scuola Normale Superiore di Pisa (Nuova Serie) [Lecture Notes. Scuola Normale
  Superiore di Pisa (New Series)], vol.~11, Edizioni della Normale, Pisa, 2012.
  \MR{3099262}

\bibitem{Gr06}
G.~Griso, \emph{Interior error estimate for periodic homogenization}, Anal.
  Appl. (Singap.) \textbf{4} (2006), no.~1, 61--79. \MR{2199793}

\bibitem{GZ20}
S.~Gu and J.~Zhuge, \emph{Large-scale regularity of nearly incompressible
  elasticity in stochastic homogenization}, arXiv:2004.14568 (2020).

\bibitem{HLM03}
S.~Hofmann, J.~Lewis, and M.~Mitrea, \emph{Spectral properties of parabolic
  layer potentials and transmission boundary problems in nonsmooth domains},
  Illinois J. Math. \textbf{47} (2003), no.~4, 1345--1361. \MR{2037007}

\bibitem{Jo19}
M.~Josien, \emph{Some quantitative homogenization results in a simple case of
  interface}, Comm. Partial Differential Equations \textbf{44} (2019), no.~10,
  907--939. \MR{3974127}

\bibitem{JR19}
M.~Josien and C.~Raithel, \emph{Quantitative homogenization for the case of an
  interface between two heterogeneous media}, arXiv:1912.00724 (2019).

\bibitem{LRU66}
O.~A. Ladyzhenskaya, V.~Ja. Rivkind, and N.~N. Ural'tseva, \emph{Solvability of
  diffraction problems in the classical sense}, Trudy Mat. Inst. Steklov.
  \textbf{92} (1966), 116--146. \MR{0211050}

\bibitem{LN03}
Y.~Li and L.~Nirenberg, \emph{Estimates for elliptic systems from composite
  material}, Comm. Pure Appl. Math. \textbf{56} (2003), no.~7, 892--925,
  Dedicated to the memory of J\"{u}rgen K. Moser. \MR{1990481}

\bibitem{LV00}
Y.~Li and M.~Vogelius, \emph{Gradient estimates for solutions to divergence
  form elliptic equations with discontinuous coefficients}, Arch. Ration. Mech.
  Anal. \textbf{153} (2000), no.~2, 91--151. \MR{1770682}

\bibitem{MMS06}
D.~Mitrea, M.~Mitrea, and Q.~Shi, \emph{Variable coefficient transmission
  problems and singular integral operators on non-smooth manifolds}, J.
  Integral Equations Appl. \textbf{18} (2006), no.~3, 361--397. \MR{2269727}

\bibitem{NS20}
W.~Niu and Z.~Shen, \emph{Combined effects of homogenization and singular
  perturbations: Quantitative estimates}, arXiv:2005.12776 (2020).

\bibitem{NSX19}
W.~Niu, Z.~Shen, and Y.~Xu, \emph{Quantitative estimates in reiterated
  homogenization}, arXiv:1909.09513 (2019).

\bibitem{P54}
M.~{Picone}, \emph{{Sur un probl\`eme nouveau pour l'\'equation lin\'eaire aux
  d\'eriv\'ees partielles de la th\'eorie math\'ematique classique de
  l'\'elasticit\'e.}}, {Centre Belge Rech. Math., Second Colloque \'equations
  aux d\'eriv\'ees partielles, Bruxelles du 24 au 26 mai 1954 2, 9-11 (1955).}

\bibitem{Sh17}
Z.~Shen, \emph{Boundary estimates in elliptic homogenization}, Anal. PDE
  \textbf{10} (2017), no.~3, 653--694. \MR{3641883}

\bibitem{Sh18}
\bysame, \emph{Periodic homogenization of elliptic systems}, Operator Theory:
  Advances and Applications, vol. 269, Birkh\"{a}user/Springer, Cham, 2018,
  Advances in Partial Differential Equations (Basel). \MR{3838419}

\bibitem{SZ17}
Z.~Shen and J.~Zhuge, \emph{Convergence rates in periodic homogenization of
  systems of elasticity}, Proc. Amer. Math. Soc. \textbf{145} (2017), no.~3,
  1187--1202. \MR{3589318}

\bibitem{St56}
G.~Stampacchia, \emph{Su un problema relativo alle equazioni di tipo ellittico
  del secondo ordine}, Ricerche Mat. \textbf{5} (1956), 3--24. \MR{82607}

\bibitem{Sus13}
T.~A. Suslina, \emph{Homogenization of the {D}irichlet problem for elliptic
  systems: {$L_2$}-operator error estimates}, Mathematika \textbf{59} (2013),
  no.~2, 463--476. \MR{3081781}

\bibitem{XB13}
J.~Xiong and J.~Bao, \emph{Sharp regularity for elliptic systems associated
  with transmission problems}, Potential Anal. \textbf{39} (2013), no.~2,
  169--194. \MR{3078336}

\bibitem{ZP05}
V.~V. Zhikov and S.~E. Pastukhova, \emph{On operator estimates for some
  problems in homogenization theory}, Russ. J. Math. Phys. \textbf{12} (2005),
  no.~4, 515--524. \MR{2201316}

\end{thebibliography}

\end{document}